\theoremstyle{plain}
\newtheorem{theorem}{Theorem}[section]
\newtheorem{lemma}[theorem]{Lemma}
\newtheorem{proposition}[theorem]{Proposition}
\newtheorem{corollary}[theorem]{Corollary}
\theoremstyle{definition}
\theoremstyle{remark}
\newtheorem*{remark*}{Remark}
\newtheorem*{claim}{Claim}
 \def\bd{\partial}
 \def\bdM{{\partial \N}}
 \def\C{{\mathbb C}}
 \def\R{{\mathbb R}}
 \def\Q{{\mathbb Q}}
 \def\E{{\mathbb E}}
 \def\H{{\mathbb H}}
 \def\Z{{\mathbb Z}}
 \def\g{{\mathfrak g}}
 \def\lap{\triangle}
\def\curl{{\rm curl}}
\def\tr{\,{\rm tr}}
\def\arcsinh{\mathop{\rm arcsinh}}
\def\arctanh{{\mathop{\rm arctanh}}}
\def\area{\mathop{\rm area}}
\def\grad{{\nabla}}
\def\vol{\mathop{\rm vol}}
\def\M{{X}} %name for cusped manifold 
\def\N{{M}} %name for cone-manifold   
\def\b{{b}} %boundary term 
\def\L{L} %name for geodesic length on cusp torus
\def\z{\zeta} 
\def\DS{{\cal H} {\cal D} {\cal S}}
\def\W{{Weitzenb\"ock  }}
\def\cal{\mathcal}
\def\Re{\mathop{\rm Re}}
\def\Im{\mathop{\rm Im}}
\def\r{\rho}
\def\n{{n}}
\def\curl{{\rm curl}\,}
\def\rot{{\rm rot}\,}
\def\div{\mathop{\rm div}}
\def\tr{\mathop{\rm tr}}
\def\grad{\mathop{\rm grad}}
\def\la{{\langle}}
\def\ra{{\rangle}}
\def\s{\sigma}
\def\cl{{\cal L}}
\def\ca{{\cal A}}
\title{The shape of  hyperbolic Dehn surgery space}
\author{Craig D. Hodgson}
\address{Department of Mathematics and Statistics\\
University of Melbourne\\
Victoria 3010, Australia}
\email{cdh@ms.unimelb.edu.au}
\thanks{The research of the first author was partially
supported by grants from the ARC}
\author{Steven P. Kerckhoff}
\address{Department of Mathematics\\
Stanford University\\
Stanford, CA 94305, U.S.A.}
\email{spk@math.stanford.edu}
\thanks{The research of the second author was partially
supported by grants from the NSF}
\dedicatory{Dedicated to Bill Thurston on his 60th birthday.}
\begin{document}

\begin{abstract} 
In this paper we develop a new theory of infinitesimal harmonic deformations for 
compact hyperbolic 3-manifolds with ``tubular boundary''. In particular, this applies to 
complements of tubes of radius at least $R_0 = \arctanh(1/\sqrt{3}) \approx 0.65848$ 
around the singular set of hyperbolic cone manifolds, removing the previous 
restrictions on cone angles.  

We then apply this to obtain a new quantitative version of Thurston's hyperbolic Dehn 
surgery theorem, showing that all generalized Dehn surgery coefficients outside a disc of 
``uniform'' size yield hyperbolic structures. Here the size of a surgery coefficient is 
measured using the Euclidean metric on a horospherical cross section to a cusp in the 
complete hyperbolic metric, rescaled to have area 1.  We also obtain good estimates on the 
change in geometry (e.g. volumes and core geodesic lengths) during hyperbolic Dehn filling.

This new harmonic deformation theory has also been used by Bromberg and his coworkers in 
their proofs of the Bers Density Conjecture for Kleinian groups.
\end{abstract}

\maketitle

\section{Introduction}\label{intro}

Let $\M$ be a compact, orientable $3$-manifold with a finite number of
torus boundary components and suppose its interior admits a complete, finite volume
hyperbolic metric.  For each torus, there are an
infinite number of topologically distinct ways to attach a solid torus, corresponding
to the homotopy class of the non-trivial simple closed curve on the boundary torus
that bounds a disk in the solid torus.   The set of non-trivial simple closed curves
on a torus is parametrized by pairs of relatively prime
integers, once a basis for the fundamental group of the torus
is chosen.  If each torus is filled, the resulting manifold is
closed.  A fundamental theorem of Thurston (\cite{thnotes})
states that, for all but a finite number of filling curves on
each boundary component, the resulting closed $3$-manifold has a
hyperbolic structure.  Those that don't result in a hyperbolic structure are 
called {\em exceptional} curves.  In \cite{HK2} we showed that there is a universal
bound to the number of curves that must be excluded from each boundary
component; in particular, when there is one boundary component, there are at most 
$60$ exceptional curves and when there are multiple boundary components, at most 
$114$ curves from each component need to be excluded.

The proof of this result uses the harmonic deformation theory developed in \cite{HK1}  
to deform the finite volume complete hyperbolic structure on the interior of $\M$.  
The deformation consists of a 
family of singular hyperbolic metrics on the filled manifolds where the 
singularities lie along geodesics isotopic to the cores of the attached solid tori.  The metrics
on  discs perpendicular to the geodesics have a single cone point at the geodesic 
with a cone angle $\alpha$ which is constant along each component.   We call such
structures {\it hyperbolic cone manifolds}.  At the complete structure the cone angles
are considered to be equal to $0$ and if the deformation reaches cone angles
$2 \pi$ on each component, this is a smooth hyperbolic structure on the filled
manifold.

For the analytic techniques in \cite{HK1} to work, it is necessary to restrict all the cone
angles to be at most $2 \pi$.  While this is adequate for the Dehn filling problem,  there
are other situations where it is important to be able to deal with larger cone angles or
with a more general type of singularity.  In particular, in his proof of the Bers
Density Conjecture, Bromberg (\cite{Brm1}, \cite{Brm2}) (and then Brock-Bromberg (\cite{BB}) 
for more general versions of 
the Density Conjecture) needs to deform cone angles equal to $4 \pi$ back to $2 \pi$. In order 
to have a similar deformation theory in this type of situation, a new analytic technique 
is necessary.  One of the primary goals of this paper is to provide such a technique.  
To this end, the previous analysis, which developed an $L^2$ Hodge theory for 
the incomplete smooth metric on the complement of the singular locus, is replaced 
by a Hodge theory on the compact hyperbolic manifold with boundary 
obtained by removing an open tubular neighborhood of the singular locus.  

\bigskip
We will now give a brief outline of this theory and some of its applications, as they
are presented in the body of this paper.
\bigskip

If one removes an equidistant tubular neighborhood of each component of the singular
locus of a $3$-dimensional cone manifold, one obtains a smooth hyperbolic manifold 
with torus boundary components.
The boundary tori have intrinsic flat metrics.  Furthermore, the principal normal curvatures
are constant on each component, equal to $\kappa, \frac{1} {\kappa}$
(where we assume that $\kappa \geq 1$).  If $\kappa \neq 1$ the lines of curvature are
geodesics in the flat metric corresponding to the meridional and
longitudinal directions.  The normal curvatures and the tube radius, $R$, are related 
by ${\rm coth}~R = \kappa$ so they determine each other.  We call tori with
these curvature properties {\it tubular}. 

We say that an orientable hyperbolic $3$-manifold $\N$ has {\it tubular boundary} if its 
boundary components are flat tori with constant principal normal curvatures
as above.  The interior of $\N$ is 
known to also admit a {\it complete} finite volume hyperbolic metric 
(\cite[Lemma 3.8]{HK2}).
Associated to $\N$ is a {\it filled} hyperbolic manifold $\hat M$.
It is canonically obtained from $\N$ by extending the hyperbolic structures on the
boundary tori normally as far as possible.  Each added component is topologically
a torus crossed with $\R$ and is foliated by tubular tori whose radii go to zero.  
Then $\hat M$ is homeomorphic to the interior of $\N$ and has a (typically incomplete)
hyperbolic structure.  The original manifold with tubular boundary $\N$ is 
a subset of $\hat M$ obtained by truncating the ends of $\hat M$ along the 
appropriate tubular tori.  Note that $\hat M$ contains many other manifolds
with tubular boundary obtained by choosing other truncations.

The geometry of the ends of the filled manifold $\hat M$ is completely determined
by the geometry of the tubular boundary tori of $\N$.  In particular, if, for
a particular boundary component, the lines of curvature for the larger curvature 
$\kappa >1$ are parallel closed curves, the completion of that end of $\hat M$ will have 
the structure of a hyperbolic cone manifold with those curves as meridians around 
the singular locus.  The resulting cone angles can be read
off from the geometry of the boundary and can be arbitrarily large.  In general,
these lines of curvature merely determine a foliation on the boundary torus
where the leaves are geodesic in the flat metric on the torus.  There is 
still a canonical way to extend the structure of the boundary torus in this case 
but the singular set for the completion will be a single point with a complicated neighborhood.  
The resulting structure obtained by completing $\hat M$, including both cone manifolds and 
this more general type of singularity, is called a hyperbolic structure with 
{\em Dehn surgery singularities}.  (See \cite{thnotes} for details.)

Fix a component $T$ of the boundary of a hyperbolic $3$-manifold $\N$ 
with tubular boundary
and consider the holonomy group of the fundamental group of that boundary torus.
Assume that $\kappa > 1$; then each element in the holonomy group will have an 
invariant axis.  Since the group is abelian, all of the elements fix a common axis 
in $\H^3$.  Choose a direction along the axis.  Then, associated to each element 
is a {\it complex length} whose real part is the signed translation distance along 
the axis and whose imaginary part is the total rotation around the axis.   The amount 
of rotation is a well-defined real number whose sign is determined by the right hand rule. The map sending an element of $\pi_1 (T) = H_1(T;\Z)$ to its complex length is linear
and can be extended canonically to a linear map  $\cl: H_1(T;\R) \to \C$.  The
resulting value $\cl (c)$ for any element $c \in H_1(T;\R)$ will be called the complex length of $c$.

When the complex length of a {\it simple} closed curve $\gamma$ on $T$ equals
$2 \pi i$, this implies that the end of the filled manifold $\hat M$ corresponding 
to $T$ completes to a smooth structure on the manifold $\N(\gamma)$
obtained from $\N$ by Dehn filling with filling curve $\gamma$.  More generally,
if the complex length of $\gamma$ equals $\alpha i$, the end completes to a hyperbolic
cone structure on $\N(\gamma)$ with cone angle $\alpha$.  Now assume that 
the map $\cl: H_1(T;\R) \to \C$ is a (real) isomorphism.  (This holds whenever the
complex lengths of two generators of $H_1(T;\Z)$ are linearly independent over the
reals, a condition that holds whenever $\N$ has tubular boundary with $\kappa >1$ on $T$.)
Then there will be a
unique element $c \in H_1(T;\R)$ so that $\cl (c) = 2 \pi i$; we say that $c$ 
is the {\it Dehn surgery coefficient} of the boundary component $T$.  When $c \in 
H_1(T;\Z)$ is a primitive element, it corresponds to a simple closed curve and
the completion of the end is smooth.  When it is in $H_1(T;\Q)$, the completed end
has a cone singularity along a core geodesic.

Suppose, for simplicity, that $\N$ has a single boundary component $T$ and denote
by $\M$ the underlying smooth manifold with boundary. Then the subset 
of $H_1(T;\R)$ consisting of Dehn surgery coefficients of hyperbolic structures
with tubular boundary on $\M$ is called
the hyperbolic Dehn surgery space for $\M$ and will be denoted by $\DS(\M)$.
(Note: In \cite{thnotes} hyperbolic Dehn surgery space is equivalently defined in terms of the filled in structures.)
Thurston's theorem about the finiteness of exceptional curves is actually a corollary 
of his theorem that $\DS(\M)$  contains a neighborhood of infinity (infinity here
corresponds to the complete finite volume structure on the interior of $\M$).
Hence, it contains all but a finite number of points of the integral lattice 
$H_1(T;\Z) \subset H_1(T;\R)$.
Similar statements are proved when there are multiple boundary components.

Thurston's proof is not effective; it gives no information about the size or shape of
hyperbolic Dehn surgery space which is why there is no information
from his proof about the number of exceptional curves in his finiteness theorem.
Note that  the vast majority of the points with integral entries are ``near" infinity
and thus that the statement that $\DS (\M)$ contains a neighborhood of infinity is 
really a much stronger statement than the finiteness of exceptional fillings.

One of the main goals of this paper is to provide an effective proof of Thurston's
result, one that will guarantee that $\DS (\M)$ contains a neighborhood of infinity
of a ``uniform" size and shape.  To simplify the description of this uniform 
region, it is useful to put a metric on $H_1(T;\R)$. 
One way to do this is to consider the complete structure on the interior of $\M$ 
and take a horospherical torus $T$ embedded in its end.  This torus inherits a
flat metric which is well-defined up to scale. The homology group $H_1(T;\R)$
can be canonically identified with the universal cover of $T$.   
So the flat metric on $T$, normalized to have area $1$, induces a flat metric 
on $H_1(T;\R) \cong \R^2$.  Note that under this identification, the distance
from the origin to point $c \in H_1(T;\Z)$ is just the geodesic length of the
corresponding closed curve $\gamma$ on $T$ measured with respect to the 
flat metric on $T$, normalized to have area $1$.  This is called the
{\it normalized length} of $\gamma$ on $T$.  As with complex length
this notion of length can be extended naturally to define a map
$\hat L: H_1(T;\R) \to \R$; the value $\hat L(c), c \in H_1(T;\R)$ is called the
normalized length of $c$.

The theorem below says that, using this metric on the plane, $\DS(\M)$ always 
contains the complement of a disk of uniform radius around the origin, {\em independent} of $\M$.

\begin{theorem} \label{shapethm}
Consider a complete, finite volume hyperbolic structure on the interior of a compact, orientable $3$-manifold $\M$ with one torus boundary component.  Let $T$ be a horospherical torus which is embedded as a cross-section to the cusp of the complete structure.   Consider $\DS(\M)$ as a subset of $H_1(T;\R) \cong \R^2$ where 
the latter is endowed with the Euclidean metric induced from the universal cover of $T$ with its flat metric scaled to have unit area.
Then $\DS(\M)$ contains the complement of a disk of radius  $7.5832$, centered
at the origin. Equivalently, any $c \in H_1(T;\R)$ whose normalized length
$\hat L (c)$ is bigger than $7.5832$ is in $\DS(\M)$.
\end{theorem}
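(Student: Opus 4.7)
The plan is to start at the complete finite-volume hyperbolic structure on the interior of $\M$ (which corresponds to ``infinity'' in $\DS(\M)$) and deform through a one-parameter family of hyperbolic cone-manifold structures on the filled manifold $\N(\gamma)$, where $\gamma$ is the curve corresponding to the given Dehn surgery coefficient $c$. If we parametrize by the cone angle $\alpha$ along the core geodesic, then at $\alpha = 0$ we have the complete cusped structure and at $\alpha = 2\pi$ we get the smooth (or cone) hyperbolic structure whose surgery coefficient is $c$. The goal is to show that this path exists for all $\alpha \in [0, 2\pi]$ provided $\hat L(c) > 7.5832$. As usual this is a connectedness argument: I would show the set of $\alpha$ for which the deformation exists (with tube radius $R \geq R_0 = \arctanh(1/\sqrt 3)$ around the core) is both open and closed in $[0,2\pi]$.

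For openness, I would appeal to the harmonic deformation theory developed earlier in the paper, which (under the assumption $R \geq R_0$) gives a unique infinitesimal harmonic deformation realizing any prescribed change of cone angle and guarantees that the deformation extends locally. The hypothesis $R \geq R_0$ is exactly what the new boundary-valued Hodge theory in this paper needs to replace the old restriction $\alpha \leq 2\pi$; this is the new ingredient that makes the argument work uniformly. For closedness, the key point is that as long as $R$ stays bounded below by $R_0$ (and the cone angle stays bounded above), compactness of cone-manifold structures (via Gromov-type arguments) lets us take a limit.

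The core analytic step, and the main obstacle, is to show that the tube radius $R(\alpha)$ cannot drop to $R_0$ before $\alpha$ reaches $2\pi$, given that the normalized length $\hat L(\gamma)$ is sufficiently large. The harmonic representative of the deformation gives a boundary term on the tube boundary $\partial U$ whose evaluation produces a differential inequality relating $dR/d\alpha$, $R$, $\alpha$ and the core length $\l$. Combined with a second differential identity for the rate of change of $\l$ with $\alpha$ (again coming from the harmonic form and its behavior on the model tube), this produces a coupled system governing the geometry of the tube as $\alpha$ increases from $0$. The ``normalized length squared'' $\hat L(\gamma)^2$ controls the initial data at $\alpha = 0$ (where the cusp cross-section of area one replaces the tube). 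Integrating the resulting differential inequality from $\alpha = 0$ to $\alpha = 2\pi$, and optimizing over the model-geometry parameters of a tube of radius $R_0$, yields a threshold value of $\hat L(\gamma)^2$ below which $R$ is guaranteed to remain above $R_0$ throughout the deformation; the numerical optimization produces the bound $7.5832$.

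Once the deformation exists on all of $[0,2\pi]$, the final structure at $\alpha = 2\pi$ gives a hyperbolic metric on the interior of $\N(\gamma)$ realizing $c$ as its Dehn surgery coefficient, so $c \in \DS(\M)$. To package this into the stated form, I would extend the argument from primitive integer classes $c$ to all $c \in H_1(T;\R)$ with $\hat L(c) > 7.5832$ by allowing cone-angle targets different from $2\pi$: the same monotone deformation works verbatim, producing a cone structure whose Dehn surgery coefficient, in the sense defined earlier in the introduction, equals the prescribed $c$. The harmonic theory's independence from $\M$ (the estimates depend only on the intrinsic tube geometry and on $\hat L$) is what makes the radius $7.5832$ uniform across all one-cusped $\M$.
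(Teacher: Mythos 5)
Your proposal follows essentially the same route as the paper: a radial deformation from the complete structure, openness from the local parametrization supplied by the new tube-boundary Hodge theory (with the hypothesis $R \ge R_0$ replacing the old cone-angle restriction), a differential inequality derived from the positivity of the Weitzenb\"ock boundary term, integration of that inequality to produce the threshold $7.5832$, and closedness via geometric limits under a tube-radius lower bound. The one ingredient you gloss over is the tube-packing estimate $\ca \ge h(\hat R)$ of Theorem \ref{vis_area_est}: the harmonic boundary term directly controls the total visual area $\ca$ (equivalently $\alpha\ell$ in the cone case), not $dR/d\alpha$, and it is this packing inequality that converts control of $\ca$ into the required lower bound on the tube radius and closes the differential inequality whose coefficients depend on $\hat R$.
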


In \cite {HK2} we showed that any simple closed curve $\gamma$ on $T$, viewed as an element of
$H_1(T;\Z)$, whose normalized length is at least $7.515$ is in $\DS(\N)$.  Thus, except for the slight change
in constant (which is due to the tube radius condition required for the
Hodge theorem, as discussed below), Theorem \ref{shapethm} is a direct generalization
of that result. The normalized length condition translates easily into an upper
bound on the number of exceptional fillings.
  
The proof of Theorem \ref{shapethm}, 
like the proof of the uniform bound on exceptional fillings,
involves two main steps.  First, it is necessary to show that one can deform a
given structure towards the desired structure.  For example, in the cone
manifold case, one needs to show that the cone angles can always be increased
a small amount.  This step depends on proving a local parametrization 
theorem, showing that, locally, the deformations are parametrized by their 
Dehn surgery coefficients.  In order to find such a local parametrization, one 
proves a local rigidity theorem which says that it is impossible to deform the
hyperbolic structure while keeping the Dehn surgery coefficients fixed.  The
local parametrization then follows by an application of the implicit function theorem.
 
The second step is to show that, under certain initial conditions, it is 
always possible to deform the complete structure on the interior of $\M$, 
through hyperbolic structures with tubular boundary, to one with the desired 
Dehn surgery coefficient before there is any degeneration of the hyperbolic structure. This step requires one to control the change in geometry under the deformation and depends on the analysis in the proof of local rigidity.

The proofs of the local rigidity and local parametrization theorems require
new analytic techniques and occupy the next three sections.  Once these
are established, the arguments to establish uniform bounds closely follow
those in \cite{HK2}.  However, the use of manifolds with tubular boundary,
as opposed to cone manifolds, leads to subtly different estimates when
there are multiple boundary components.

For a compact, orientable, $3$-manifold $\M$ with multiple torus boundary components $T_1, \cdots, T_k$, the Dehn surgery space is
a subset of $\oplus_i H_1(T_i;\R)$ and the Dehn surgery coefficient 
$c = (c_1, c_2, \cdots, c_k)$ is determined by the Dehn surgery coefficients 
$c_i \in H_1(T_i;\R)$ for each torus.  
For $c_i \in H_1(T_i;\R)$ the normalized length $\hat L(c_i)$  is computed on 
a horospherical torus corresponding to $T_i$ in the complete structure 
on the interior of $\M$ as described
above.  In this case we prove the following uniform statement:

\begin{theorem} \label{multipleshapethm}
Consider a complete, finite volume hyperbolic structure on the interior of a compact, orientable $3$-manifold $\M$ with $k \ge 1$ torus boundary components.  
Let $T_1, \cdots, T_k$ be horospherical tori which are embedded as 
cross-sections to the cusps of the complete structure.   
Consider $\DS(\M)$ as a subset of $\oplus_i H_1(T_i;\R)$. 
Then there exists a universal constant $C = 7.5832$ such that 
$c = (c_1,c_2, \cdots, c_k)$ is in $\DS(\M)$ provided the normalized lengths
$\hat L_i = \hat L(c_i)$ satisfy 
\begin{eqnarray}
\sum_i {1 \over {\hat L_i}^2} < {1 \over {C^2}}. 
\label{hatLsum}
\end{eqnarray}
\end{theorem}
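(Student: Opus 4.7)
The plan is to extend the proof of Theorem \ref{shapethm} from one cusp to many, keeping the same two-step strategy outlined in the introduction. First, invoke the local parametrization theorem (whose proof occupies the technical sections that follow, and which rests on the local rigidity theorem established via the new harmonic deformation theory on manifolds with tubular boundary) to conclude that, near any hyperbolic structure on $\M$ with tubular boundary, deformations are smoothly parametrized by their Dehn surgery coefficients $c = (c_1,\ldots,c_k) \in \oplus_i H_1(T_i;\R)$. In particular, starting from the complete finite-volume structure, which corresponds to the point at infinity in $\DS(\M)$, one can deform in any prescribed direction in the reciprocal-length parameters.

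Second, I would perform a global deformation along a straight line in those reciprocal parameters from the complete structure to the target $c$ satisfying \eqref{hatLsum}. At each time $t$ along this path one has a hyperbolic structure on $\M$ with tubular boundary, and one must control the geometry well enough to prevent any tube radius $R_i$ from dropping below the threshold $R_0 = \arctanh(1/\sqrt{3})$ at which the Hodge theory ceases to apply. The infinitesimal deformation at each time is a single harmonic form on $\N$ whose boundary behavior on each torus $T_i$ is prescribed by the fixed direction in Dehn surgery coefficient space. An $L^2$ energy estimate for this form, combined with the \W/boundary-term analysis of the earlier sections, produces a first-order differential inequality for a monotone geometric functional of the tube radii, as in the argument of \cite{HK2}.

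The new feature in the multi-cusp case is that the single harmonic deformation interacts with every cusp simultaneously. Near each boundary torus $T_i$, the form splits off a model piece determined by $c_i$ and the local tube geometry, whose $L^2$-norm contributes a term of order $1/\hat L_i^2$. Summing these contributions and using the natural orthogonality between different cusp models yields a bound on the global $L^2$-norm by $\sum_i 1/\hat L_i^2$. Feeding this into the differential inequality for the tube radii and integrating, exactly as in the single-cusp case, shows that each $R_i$ remains above $R_0$ for all $t$ along the path precisely when $\sum_i 1/\hat L_i^2 < 1/C^2$ holds initially, for the same universal constant $C = 7.5832$ as in Theorem \ref{shapethm}. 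Since no radius degenerates, the deformation extends all the way to $c$, which therefore lies in $\DS(\M)$.

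The main obstacle will be verifying that the cusp-by-cusp decomposition of the $L^2$-norm really does give a clean sum without cross-terms that would worsen the constant, and that the geometric functional whose monotonicity drives the argument has the correct additive structure over the boundary tori. Modulo these points, the argument is a close transcription of the single-cusp case: the fact that the same constant $C = 7.5832$ appears in both statements is precisely a reflection of this additive structure and of the reuse of the universal Hodge-theoretic constant from the analysis on a single tube.
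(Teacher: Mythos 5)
Your high-level strategy (openness via the local parametrization theorem, plus quantitative control along a radial path to prevent the tube radii from dropping below $R_0$) is the paper's strategy, but the mechanism you propose for how the hypothesis $\sum_i 1/\hat L_i^2 < 1/C^2$ enters is not the one that works, and this is exactly the step you flag as your ``main obstacle.'' In the paper the sum $\sum_i 1/\hat L_i^2$ does \emph{not} arise as an $L^2$-norm bound on the harmonic form obtained by summing orthogonal cusp contributions of size $1/\hat L_i^2$. It arises as the initial value, as $\alpha \to 0$, of the quantity $v = \ca/\alpha^2$, where $\ca = \sum_j \ca_j$ is the \emph{total visual area} of the boundary. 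The deformation theory is then used to derive a differential inequality for $v$: the positivity of the total boundary term $\b_R(\eta_0,\eta_0) \ge 0$ (Corollary \ref{positivestandard}) gives a quadratic constraint $\sum_j (\ca_j/\ca)(X_j+\xi)^2 \le w^2$ on the coefficients $X_j$ describing the holonomy variation at each cusp, and a \emph{weighted Cauchy--Schwarz inequality} with weights $\ca_j/\ca$ converts this into a two-sided bound on $\sum_j (v_j/v) X_j$, which is precisely $-\frac{\alpha}{d\alpha/dt}\cdot\frac{1}{v}\frac{dv}{dt}$. The essential point your sketch misses is that the individual boundary terms $\int_{T_j} \hat*D\eta_0\wedge\eta_0$ can be \emph{negative}; only their sum is controlled, so there is no cusp-by-cusp decomposition to ``orthogonalize,'' and the correct way to distribute the total positivity among the cusps is via these visual-area weights. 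The tube radius itself is then controlled indirectly, through a separate tube-packing estimate $\ca \ge h(\hat R)$ (Theorem \ref{vis_area_est}), not by a differential inequality for a functional of the radii.

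A second, smaller gap: keeping every tube radius above $R_0$ does not by itself let you conclude that ``the deformation extends all the way to $c$.'' One still needs a compactness/geometric-limit argument (Theorem \ref{alnondeg}) showing that the structures $M_t$ converge as $t\to 1$ to a hyperbolic manifold with tubular boundary whose Dehn surgery coefficient is the limit of the coefficients along the path; this uses, in addition to the tube radius bound, an upper bound on volume coming from a Schl\"afli-type monotonicity formula (Lemma \ref{schlafli}) and a lower bound on the injectivity radius of the boundary tori coming from further tube-packing arguments (Lemma \ref{injrad}). Without these ingredients the set of attainable $\alpha$ is open but not visibly closed in $(0,2\pi]$.
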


When $k = 1$ this is precisely the same statement as that of Theorem \ref{shapethm}. 
In the multiple cusp case, it again gives
a uniform upper bound on the number of exceptional simple closed curves that
need to be excluded from each boundary component so that the remaining Dehn
filled manifolds are necessarily hyperbolic. 

However, it should be noted that the bound depends on the number of boundary 
components.  This is in contrast to Theorem 5.12 in \cite{HK2} which provides 
a uniform bound independent of the number of cusps.  The reason for this 
difference is that in the previous paper we allow the possibility of
increasing the cone angles at varying rates.  Once a cone angle of $2 \pi$
is attained on one component of the singular locus, it is no longer changed,
while the other angles are increased.  We no longer keep track of the geometry 
in a neighborhood of the smooth core geodesics.  (Indeed, a geodesic could become 
non-simple and change isotopy class.)  This was not adequately explained in
\cite{HK2}; for a discussion of this and other subtler issues that arise
in the multiple cusp case, the reader may consult \cite{Pur}. 

In the current paper, we require a lower bound on the tube radius of all the tubular boundary components throughout the deformation.  This is because, even at the final 
Dehn surgery coefficient, the filled manifold may still have singularities, and thus we
no longer have the luxury of ignoring the tube radius around a component once 
its desired surgery coefficient is attained.  
We always move radially in $\DS(\M)$ from the complete structure to the desired Dehn surgery 
coefficients (which, when all the coefficients
correspond to simple closed curves, amounts to increasing the cone angles at
equal rates).  This provides weaker estimates in the case of multiple cusps.
It does, however, have the advantage in the case of smooth Dehn filling (or
cone manifolds) that the isotopy class of the union of the core geodesics 
will necessarily remain unchanged.
 
We also obtain good control on the change in geometry during generalized 
hyperbolic Dehn filling. Theorem \ref{geomthm} gives explicit upper and lower bounds for the volume and core geodesic length, with the asymptotic behavior given by Neumann-Zagier in \cite{neumann-zagier}. These bounds are illustrated in Figures \ref{fig2}
and \ref{fig3} at the end of the paper. For example, we obtain the following
numerical bound.

\begin{theorem} Let $X$ be a compact, orientable $3$-manifold as in 
Theorem \ref{multipleshapethm},  
and let $V_\infty$ denote the volume of the complete hyperbolic structure
on the interior of $X$.
Let $c =(c_1,\ldots,c_k)\in H_1(\bd X;\R)$
be a surgery coefficient with normalized lengths $\hat L_i = \hat L (c_i)$
satisfying
$$ \sum_i {1 \over \hat L_i^2} < {1 \over C^2} \text{ where } C = 7.5832,$$
and let $M(c)$ be the filled hyperbolic manifold with Dehn surgery coefficient $c$.
 Then the decrease in volume $\Delta V = V_\infty - \vol(M(c))$ during hyperbolic Dehn filling  is at most $0.198$.
\end{theorem}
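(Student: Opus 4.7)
My strategy is to integrate the Schläfli formula along the radial deformation in Dehn surgery space provided by Theorem \ref{multipleshapethm}, then apply the explicit bounds on core geodesic length developed in the body of the paper. First, for $t \in (0,1]$ consider the rescaled surgery coefficient $c(t) = c/t$. Since $\hat L_i$ is homogeneous of degree $1$,
\[
\sum_i \frac{1}{\hat L_i(c_i/t)^2} \;=\; t^2 \sum_i \frac{1}{\hat L_i^2} \;<\; \frac{t^2}{C^2} \;\le\; \frac{1}{C^2},
\]
so Theorem \ref{multipleshapethm} guarantees $c(t) \in \DS(X)$ for every such $t$. This produces a smooth one-parameter family of hyperbolic structures on $X$ with Dehn surgery singularities, limiting to the complete cusped structure as $t \to 0^+$ and reaching $M(c)$ at $t = 1$. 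Along this family, the complex length of $c_i$ equals $2\pi i t$, so the effective cone angle at the $i$-th component is $\alpha_i(t) = 2\pi t$, while the length $\ell_i(t)$ of the singular core increases from $0$.

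Next, I apply Schläfli's formula for hyperbolic structures with Dehn surgery singularities,
\[
\frac{dV}{dt} \;=\; -\tfrac12 \sum_i \ell_i(t)\,\frac{d\alpha_i}{dt} \;=\; -\pi \sum_i \ell_i(t),
\]
and integrate over $[0,1]$ to obtain $\Delta V = \pi \int_0^1 \sum_i \ell_i(t)\,dt$. Because the normalized-length hypothesis holds uniformly along the radial path, the tubular boundary lower bound $R \geq R_0 = \arctanh(1/\sqrt{3})$ required for the new Hodge theory is maintained throughout, so the explicit upper bounds on $\ell_i(t)$ provided by Theorem \ref{geomthm} apply uniformly in $t$. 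To leading order one has $\ell_i(t) \approx 2\pi t/\hat L_i^2$, consistent with the Neumann–Zagier asymptotics, with controlled higher-order corrections depending on the tube radius. Summing these bounds, using $\sum 1/\hat L_i^2 < 1/C^2$, and integrating then yields the stated numerical estimate when $C = 7.5832$.

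The main obstacle is producing sharp enough uniform bounds on $\ell_i(t)$ along the entire deformation — this is precisely what Theorem \ref{geomthm} supplies, and its proof rests on the new harmonic deformation theory for manifolds with tubular boundary developed in this paper, together with the verification (from Step 1) that the tube radius stays bounded below by $R_0$ throughout. Once those bounds are in hand, the present numerical corollary follows by a direct, if somewhat delicate, integration: the naive leading-order estimate $\pi \int_0^1 \sum_i 2\pi t/\hat L_i^2\,dt = \pi^2 \sum_i 1/\hat L_i^2 < \pi^2/C^2 \approx 0.172$ already shows that the correct answer should be well within $0.198$, with the gap absorbing the quantitative corrections coming from the tube radius estimates at $R_0$.
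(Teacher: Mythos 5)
Your outline (radial deformation, Schl\"afli's formula, integration over the path) is the same skeleton the paper uses, but as written the argument is circular and the essential quantitative step is missing. You invoke ``the explicit upper bounds on $\ell_i(t)$ provided by Theorem \ref{geomthm}'' — but the first conclusion of Theorem \ref{geomthm} \emph{is} the volume estimate you are trying to prove, and the control on the core lengths (more precisely, on the total visual area $\ca$) throughout the deformation is exactly the content that has to be established, not quoted. A further point: for a general real surgery coefficient the filled end need not have a core geodesic, so $\ell_i(t)$ is not defined; the correct form of Schl\"afli's formula here is $dV = -\frac{\ca}{2\alpha}\,d\alpha$ with $\ca$ the total visual area of the boundary (Lemma \ref{schlafli}), which reduces to $-\frac12\sum_j\ell_j\,d\alpha$ only in the cone-manifold case via $\ca_j=\alpha\ell_j$.

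The missing content is the following chain. One must first derive the differential inequalities for $v=\ca/\alpha^2$ (Proposition \ref{dvdt_inequal}), which come from the positivity of $b_R(\eta_0,\eta_0)$ and a Cauchy--Schwarz argument across the boundary components; these involve the tube radius $\hat R$. One must then couple them to the tube-packing estimate $\ca\ge h(\hat R)$ (Theorem \ref{vis_area_est}) via the substitution $z=\tanh\rho$ with $h(\rho)=\ca$, integrate to get $\ca\le 1/H(z)$ where $x=\alpha^2/\hat L^2=f(z)$, and only then evaluate
$$\Delta V=\int_0^{\hat x}\frac{\ca(x)}{4x}\,dx\ \le\ \frac14\int_{1/\sqrt3}^{1}\frac{H'(z)}{H(z)\bigl(H(z)+G(z)\bigr)}\,dz\ \approx\ 0.1978 .$$
Your leading-order Neumann--Zagier computation gives $\pi^2/C^2\approx 0.172$, and the assertion that the corrections coming from the finite tube radius ``are absorbed'' in the gap up to $0.198$ is precisely the claim that requires proof: the correction is roughly $15\%$ of the leading term, and nothing in your argument shows it is not larger. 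Without carrying out the explicit integration above (or an equivalent computation), the numerical bound $0.198$ is unsubstantiated.
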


We now briefly explain how the use of manifolds with tubular boundary
allows us to avoid the analytic issues that led to the cone angle restriction
in \cite{HK2}.

The original local rigidity theory in \cite{HK1} applies only
to hyperbolic $3$-manifolds with conical singularities along a geodesic link where
the cone angles are restricted to be at most $2 \pi$. The argument
involves finding, for any infinitesimal deformation of the hyperbolic cone structure,  
a harmonic representative and then utilizing  a Weitzenb\"ock formula for such harmonic
infinitesimal deformations.  
The analysis using this formula involves an integration by parts on the
complement of a tubular neighborhood of the singular locus, resulting in a term from the
boundary of the tubular neighborhood.   Any deformation for which this boundary term
goes to zero as the radius of the tube goes to zero is seen to be trivial.  The main
step in \cite{HK1} is to show that, for any infinitesimal deformation where all the cone
angles are held constant,  the boundary term does, indeed, go to zero as long as the
cone angles are at most $2 \pi$.  The fact that the analysis involves arbitrarily
small neighborhoods of the singular locus means that it depends on the asymptotic behavior
of harmonic deformations near the singular locus. This behavior is strongly governed by
the value of the cone angle along the singular locus.

To avoid a dependence on the local behavior near the singular locus, it is
necessary to work on the complement of a tubular neighborhood whose tube
radius is bounded below.  Again, one must find a harmonic representative
for the infinitesimal deformation of the hyperbolic structure with tubular boundary.
Since these infinitesimal deformations can be viewed as cohomology classes,
this can be viewed as Hodge theory on a manifold with boundary.  In this case,
when one uses the Weitzenb\"ock formula and integration by parts, one wants
to end up with a boundary term with an appropriate sign.  When such a boundary
term is obtained, the conclusion is again that the deformation is trivial, implying
a local rigidity theorem as before.

Thus, a Hodge theory must be developed with the boundary term from the
Weitzenb\"ock formula in mind.  To this end, we find a formula for this term
in Section \ref{weitz}.  The formula derived there is quite general and is valid 
for any  hyperbolic
3-manifold with boundary, not just those with tubular boundary.  As  a result,
it should have applications in other contexts and may be of independent interest.

The form of this Weitzenb\"ock boundary term motivates the specific boundary conditions
we require for our Hodge representative when the manifold has tubular boundary.   
The proof that the corresponding boundary
value problem can always be uniquely solved is contained in Section \ref{bvalues}.
This result requires a universal lower bound on the tube radius of the
tubular boundary components.   By definition the {\em tube radius} $R$ of a
tubular boundary component is determined by the formula ${\rm coth}\,R = \kappa$,
where $\kappa \geq 1$ is the larger of the two principal curvatures on that component.
When $\kappa = 1$ (which corresponds to a horospherical torus), the tube radius
is said to be infinite.

Once the required Hodge theorem is proved,  similar arguments to those
in \cite{HK1} imply the following local rigidity and local parametrization
result.  The previous cone angle restriction has been removed and is
replaced by a mild restriction on the tube radius.  For simplicity,
we also assume that all tube radii are finite.

\begin{theorem}\label{bdrylocalparam} Let $\N$ be a compact, orientable hyperbolic 3-manifold 
with tubular boundary and suppose that the tube radius of each boundary component 
is finite and at least $R_0 = \arctanh(1/\sqrt{3}) \approx 0.65848.$  Then there are no
deformations of the hyperbolic structure fixing the Dehn surgery coefficient of $\N$.  
Furthermore, the nearby
hyperbolic structures with tubular boundary are parametrized by their Dehn surgery 
coefficients.  In particular,  a finite volume hyperbolic cone-manifold 
with singularities along a link and tube radii at least  $R_0$
has no deformations of the hyperbolic structure keeping the
cone angles fixed, and the nearby hyperbolic cone-manifold structures are
parametrized by their cone angles.
\end{theorem}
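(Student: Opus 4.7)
The plan is to combine the boundary-value Hodge theorem proved in Section \ref{bvalues} with the Weitzenb\"ock formula derived in Section \ref{weitz} to establish local rigidity, and then to deduce the local parametrization conclusion from the implicit function theorem.

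First I would interpret an infinitesimal deformation of the hyperbolic structure on $\N$ as a cohomology class represented by a one-form with values in the flat bundle of infinitesimal Killing fields on $\H^3$. The Hodge theorem in Section \ref{bvalues} produces a canonical harmonic representative $\omega$ in every such class, satisfying on each tubular torus the boundary condition dictated by the Weitzenb\"ock boundary term from Section \ref{weitz}; it is precisely the hypothesis $R \ge R_0 = \arctanh(1/\sqrt 3)$ that guarantees this boundary value problem is uniquely solvable. Suppose then that the original deformation fixes the Dehn surgery coefficient on every boundary component, and replace it by its harmonic representative $\omega$.

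Next I would apply the Weitzenb\"ock identity to $\omega$ and integrate by parts over $\N$. Projecting onto the symmetric traceless part of the deformation, which carries the essential geometric content, the interior integrand becomes $|\nabla\omega|^2$ plus a curvature term that is nonnegative in constant curvature $-1$. The remaining contribution is a boundary integral that, on each tubular torus $T_i$, reduces to an explicit quadratic form whose coefficients depend on the tube radius $R_i$ and on the holonomy of $\pi_1(T_i)$. The crucial computation is that on the subspace of infinitesimal deformations fixing $\cl(c_i) = 2\pi i$, this quadratic form has a definite sign, and the sign is the favorable one exactly when $\coth R_i \le \sqrt 3$, i.e.\ when $R_i \ge R_0$. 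Since the bulk and boundary contributions appear with opposite signs in the integrated identity, both must vanish, forcing $\omega \equiv 0$. This establishes local rigidity.

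To pass to the local parametrization, I would apply the implicit function theorem. The space of germs of hyperbolic structures with tubular boundary on the underlying smooth manifold $\M$, modulo isometry, is a smooth real-analytic variety whose tangent space at the given structure is the cohomology group analyzed above; a standard character-variety dimension count gives it real dimension $2k$, matching that of $\oplus_{i=1}^k H_1(T_i;\R)$. The local rigidity just proved means that the derivative of the Dehn surgery coefficient map has trivial kernel and is therefore an isomorphism, so the coefficient map is a local diffeomorphism. The cone-manifold assertion follows as a special case, since prescribing cone angles $\alpha_i$ along a link is the same as prescribing the surgery coefficients $c_i$ with $\cl(c_i) = \alpha_i i$ on the meridional elements. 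The main technical obstacle is the sign analysis of the Weitzenb\"ock boundary quadratic form on a tubular torus: identifying the exact condition under which this form is positive semidefinite on the coefficient-preserving subspace is what pins down the threshold $R_0 = \arctanh(1/\sqrt 3)$ and forces the tube-radius hypothesis.
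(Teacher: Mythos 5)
Your overall architecture (harmonic representative, Weitzenb\"ock identity with boundary term, then the implicit function theorem) is the paper's, but the middle step contains a genuine gap. For the harmonic representative $\omega$, the boundary integral $b_R(\eta,\eta)$ is \emph{not} an explicit quadratic form in the tube radii and the holonomy of $\pi_1(T_i)$: it splits as $b_R(\eta_0,\eta_0)+b_R(\eta_c,\eta_c)$, where $\eta_0$ comes from the standard form (and is determined by the infinitesimal variation of the boundary holonomy) while $\eta_c=\Re(d_Es)$ comes from the global correction section produced by the Hodge theorem. Only the first piece is computable from the holonomy data. Controlling the sign of the second piece is the main analytic content of Sections \ref{weitz}--\ref{bvalues}, and \emph{that} is where the hypothesis $R\ge R_0$ enters: the coefficients $(3-k_i^2)k_j$ in Theorem \ref{positiveb} are non-negative exactly when $\coth R\le\sqrt3$. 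You have misplaced the threshold. The standard part, once one observes that fixing the Dehn surgery coefficient forces the $\omega_m$-component to vanish ($s=0$ via the variation formula for complex length), contributes $-(x^2+y^2)\tfrac{\sinh R}{\cosh R}\bigl(2+\cosh^{-2}R\bigr)\area(T)\le 0$ for \emph{every} finite $R$ --- there is no $R_0$ condition on that piece. Likewise, the boundary value problem of Theorem \ref{existence} is uniquely solvable for any tubular boundary; $R_0$ plays no role in solvability. Without the decomposition into $\eta_0$ and $\eta_c$ and the separate sign analysis of $b_R(\eta_c,\eta_c)$, the claim that the full boundary term is non-positive on the coefficient-preserving subspace is unsupported.

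Two smaller points. First, your assertion that ``a standard character-variety dimension count gives real dimension $2k$'' conceals half the argument: Thurston's half-lives-half-dies argument gives $\dim_\C\ge k$, but the matching upper bound \emph{is} the infinitesimal rigidity theorem (the derivative of $\rho\mapsto\cl_\rho(c_0)$ has trivial kernel), so smoothness and the dimension are consequences of, not inputs to, the rigidity. Second, passing from ``the map $\rho\mapsto\cl_\rho(c_0)$ is a local diffeomorphism'' to ``the Dehn surgery coefficient map $\rho\mapsto c(\rho)$ is a local diffeomorphism'' requires a further implicit-function-theorem step (solving $\cl_\rho(c)=2\pi i$ for $c$ as a function of $\rho$ and checking the linearization is invertible, using the invertibility of $\cl_{\rho_0}$ for finite tube radius); your sketch elides this, though it is routine once the first step is in place.
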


Once this analytic theory for hyperbolic manifolds with tubular boundary is
developed and the above local rigidity theorem is proved, the arguments
in \cite{HK2} go through with minor changes.  Indeed, much of that paper
was written in the context of manifolds with tubular boundary, once the necessary
analytic and geometric control was derived.  These arguments are recalled
in Section \ref{applic},  where they are then applied to prove Theorems \ref{shapethm}
and \ref{multipleshapethm} and other results.

\section{Preliminary Material}\label{prelim}

In this section we recall the basic setup for the harmonic deformation theory
of hyperbolic structures on $3$-manifolds.  The reader is referred to
the papers \cite{HK1} and \cite{HK2} for details, and to \cite{HKwarwick} for
a survey of the theory and its applications.

\bigskip

An infinitesimal deformation of a hyperbolic structure on a hyperbolic 
$3$-manifold $\N$ is
given by a cohomology class in $H^1(\N; E)$ where is $E$ is
the bundle of (germs of) infinitesimal isometries of $\H^3$.  
By viewing this cohomology group in terms of de Rham cohomology,
such a cohomology class can be represented by a $1$-form with values
in $E$.  The $1$-form will be closed with respect to the $E$-valued exterior
derivative which we denote by $d_E$.  A representative for a cohomology class can 
be altered by a coboundary
without changing its cohomology class.  An $E$-valued $1$-form is a
coboundary precisely when it can be expressed as $d_E s$, where
$s$ is an $E$-valued $0$-form, i.e. a {\em global} section of $E$.

A standard method for choosing a particularly nice representative in a cohomology
class is to find a {\em harmonic} representative: one that is co-closed
as well as closed.  On a closed manifold such a harmonic representative is
unique.  When the manifold is non-compact or has boundary, it is necessary to
choose asymptotic or boundary conditions to guarantee existence and uniqueness.

When $\N$ is a hyperbolic $3$-manifold with tubular boundary, one can begin with a 
representative $\hat \omega \in H^1(\N; E)$ that has a special form in a  neighborhood 
of the boundary.  In \cite{HK1}, when $\N$ is a hyperbolic cone manifold, specific closed 
$E$-valued $1$-forms, which we call {\em standard forms}, are defined in a neighborhood of
the singular locus.  The same forms are defined in the neighborhood of the boundary 
components of a general hyperbolic manifold with tubular boundary.  
They have the property  that  any possible infinitesimal change 
in the holonomy representation of the fundamental group of a boundary torus can be induced
by one of these forms.  As a result, by standard cohomology theory, for any infinitesimal 
deformation of the hyperbolic cone manifold structure, it is possible to find a closed
$E$-valued $1$-form $\hat \omega$ on $\N$ which equals one of these
standard forms in a neighborhood of each torus boundary.

The standard forms are harmonic so the $E$-valued $1$-form $\hat \omega$
will be harmonic in a neighborhood of the boundary but not generally
harmonic on all of $\N$.  Since it represents a cohomology class in
$H^1(\N;E)$, it will be closed as an $E$-valued $1$-form, but it won't
generally be co-closed.  If we denote by  $\delta_E$ the
adjoint of the exterior derivative, $d_E$, on $E$-valued forms, then this means
that $d_E \hat \omega = 0$, but $\delta_E \hat \omega \neq 0$ in general.
Finding a harmonic (i.e. $d_E$ closed and $\delta_E$ co-closed)
representative cohomologous to $\hat \omega$ is equivalent to finding
a global section $s$ such that
\begin{eqnarray}
\delta_E d_E s ~ = ~ - \delta_E \hat \omega.
\label{correction}
\end{eqnarray}
Then, $\omega = \hat \omega + d_E s$ satisfies
$\delta_E \omega = 0, \, d_E \omega = 0$;  so it is a closed and co-closed
representative in the same cohomology class as $\hat \omega$.

The fibers of the bundle $E$ are all isomorphic to the Lie
algebra $\g$ of the Lie group of isometries of hyperbolic space.
One special feature of the 3-dimensional case is the
{\em complex structure} on the Lie algebra
$\g \cong sl_2 \C$.
The infinitesimal rotations fixing a point $p \in \H^3$
can be identified with $su(2) \cong so(3)$, and
the infinitesimal pure translations at $p$
correspond to $i \, su(2) \cong T_p \H^3$.  
Geometrically, if $t \in T_p \H^3$ represents an infinitesimal translation,
then $i \, t $ represents an infinitesimal rotation with
axis in the direction of $t$. Thus, on a hyperbolic 3-manifold $\N$ we can identify
the bundle $E$ with the {\em complexified} tangent bundle $T\N \otimes \C $.
At each point, the fiber decomposes into a real and imaginary part, representing an
infinitesimal translation and an infinitesimal rotation, respectively, and we can 
speak of the real and imaginary parts of an $E$-valued form.

In \cite{HK1} it was shown that in order to solve equation (\ref{correction})
for $E$-valued sections, it suffices to solve the real part of the equation.
The real part of a section $s$ of $E$ is just a (real) section of the tangent
bundle of $\N$; i.e., it is a vector field, which we denote by $u$.  The
real part of $\delta_E d_E s$ equals $(\nabla^* \nabla + 2) \,u$, where
$\nabla$ denotes the (Riemannian) covariant derivative and $\nabla^*$
is its adjoint.  The composition $\nabla^* \nabla$ is sometimes called
the ``rough Laplacian" or the ``connection Laplacian".  

To solve the real part of the equation (\ref{correction}), we find that the
computations are somewhat easier if we replace vector fields by their
dual real-valued $1$-forms.   We take the real part of $- \delta_E \hat \omega$, 
considered as a vector field, and denote its dual $1$-form by $\zeta$.   The operator 
$(\nabla^* \nabla + 2)$ on vector fields becomes $\hat \lap +4$ on dual $1$-forms, where 
$\hat \lap = \hat d \hat \delta + \hat \delta \hat d$ is the usual Laplacian on real-valued $1$-forms 
and we are denoting the exterior derivative and its adjoint on $\N$ by 
$\hat d$ and $\hat \delta$, respectively.   We must then solve the equation
\begin{eqnarray} (\hat \lap + 4) \, \tau ~=~ \zeta, \label{taueqn}
\end{eqnarray}
for a globally defined real-valued $1$-form $\tau$ on $\N$, which will be dual to the 
vector field $u$.  

For a manifold with boundary it is necessary to prescribe boundary conditions
on $\tau$ for this problem to have a unique solution.  The boundary conditions
we choose are non-standard and very specific to our hyperbolic deformation theory context.
In particular, the local rigidity results that we seek depend on a Weitzenb\"ock
formula for harmonic $E$-valued $1$-forms.  This formula contains a boundary
term whose sign is crucial to the argument. The harmonic form $\omega \in H^1(\N; E)$ 
is obtained by solving (\ref{taueqn}),  which, in turn, 
gives us a solution to (\ref{correction}).  Since $\omega = \hat \omega + d_E s$
and $\hat \omega$ is standard in a neighborhood of the boundary, our boundary term 
will have a contribution from the standard form, which is quite explicit, and from
the correction term $d_E s$.  The behavior of the latter depends on our choice
of boundary condition when solving (\ref{taueqn}).  A major consideration
when choosing a boundary condition is that the contribution to the Weitzenb\"ock
boundary term from the correction term $d_E s$ be non-positive. 

In Section \ref{weitz} we compute a formula for this contribution
for a general hyperbolic $3$-manifold with boundary.  We then specialize
to our current situation of a hyperbolic manifold with tubular boundary and
choose boundary conditions specific to this case.
In Section \ref{bvalues} we prove that the problem of solving (\ref{taueqn})
with these boundary conditions always has a unique solution.

We now recall the Weitzenb\"ock formula for harmonic $E$-valued $1$-forms,
referring to \cite{HK1} and \cite{HK2} for details and proofs.

We can decompose any $\omega \in H^1(\N,E)$  into its real and imaginary
parts $\omega = \eta ~+~ i \, \tilde \eta$, where $\eta$ and $\tilde \eta$ are
vector field valued $1$-forms on $\N$ which we can view as elements of
${\rm Hom}\,(T\N,T\N)$ at each point of $\N$.   The real symmetric part of 
$\omega$, viewed as a symmetric $2$-tensor, describes the infinitesimal change
in the metric induced by the infinitesimal deformation corresponding to $\omega$. 

One can always choose a representative for a cohomology class where 
$\eta$ is {\em symmetric}, when viewed as a section of the bundle ${\rm Hom}\,(T\N,T\N)$.
To do this, one notes that, since $\omega$ is $d_E$-closed, it is the image of a 
{\em locally defined} section.  Then $\eta$ is symmetric if the local section has the property 
that its imaginary part equals $ \frac 1 2$ of the curl of the real part, where both the 
real and imaginary parts are viewed as locally defined vector fields.  It is shown in
\cite{HK1} that such a choice of local section is always possible; in that paper, such
a local section was called a {\em canonical lift} of the real part. (Note, however, that
the definition of curl in that paper differs from the standard one, which is the one used
here, by a sign and a factor of $2$).

If $\omega$ is harmonic then $\eta$ satisfies the equation
$$(D^*D +  D D^*) \eta ~+~ \eta ~=~  0$$
where $D$ denotes the exterior covariant derivative on vector valued $1$-forms and 
$D^*$ is its adjoint.  If $\eta$ is also traceless then it satisfies $D^* \eta = 0$; hence, 
it satisfies the simpler equation
\begin{eqnarray}D^*D \eta ~+~ \eta ~=~ 0.\label{etaeqn} 
\end{eqnarray}

In this case we have that $\tilde \eta = \hat * D\eta$, where $\hat *$ is the Hodge star 
operator on forms in $\N$ and takes the vector valued $2$-form $D\eta$ to a vector 
valued $1$-form.  It is also true in this case that $\hat * D\eta$ is symmetric and traceless.  
Thus, we can write
\begin{eqnarray} \omega ~=~ \eta ~+~ \tilde \eta ~=~ \eta ~+~ i \,\hat * D\eta,\label{omegadecomp}
\end{eqnarray}
where both $\eta$ and $\hat * D\eta$ are symmetric and traceless.

Let $\N$ be a hyperbolic 3-manifold with tubular boundary, whose boundary components 
are tori of tube radii $R_1, \ldots , R_k$. The boundary components will
always be oriented by the {\em inward} normal for $\N$.
For any $T\N$-valued $1$-forms $\alpha, \beta$ we define
\begin{eqnarray}\b_R(\alpha,\beta) = \int_{\bd M} \hat * D\alpha \wedge \beta,
\label{b_defn}\end{eqnarray}
where $R$ denotes the vector $(R_1, \ldots, R_k)$.

In this integral, $\hat * D\alpha \wedge \beta$ denotes
the real valued 2-form obtained
using the wedge product of the form parts, and the geometrically
defined inner product on the vector-valued parts of the
$TM$-valued $1$-forms $\hat *\!D\alpha$ and $\beta$.

Returning to equation (\ref{etaeqn}), we take the $L^2$ inner product on $\N$ of
this equation with $\eta$ and integrate by parts.  We then obtain the following
Weitzenb\"ock formula with boundary for any harmonic infinitesimal deformation
$\omega$ of the form (\ref{omegadecomp}):
\begin{eqnarray}||D\eta||^2 ~+~ ||\eta||^2 ~=~ \b_R(\eta,\eta).\label{bdryeq}
\end{eqnarray}

In particular, for a non-trivial infinitesimal deformation, the boundary term $\b_R(\eta,\eta)$ 
must be {\em positive}.  The proof in \cite{HK1} that there are no infinitesimal deformations
of hyperbolic cone manifolds (with cone angles at most $2 \pi$) fixing the cone angles
amounts to showing that, for a deformation fixing the cone angles, a harmonic
representative $\omega$ can be found so that this boundary term goes to $0$
as the tube radius goes to zero.  The results in the current paper depend on showing
that when the Dehn surgery coefficients are all preserved (and the tube
radii are all bigger than a universal constant),  a harmonic representative can be 
found so that this boundary term is {\em non-positive}.

Recall that the harmonic form $\omega$ will be found by starting with a
representative $\hat \omega$ which equals a standard form $\omega_0$ in a neighborhood
of the boundary, hence is harmonic in that neighborhood, but not globally harmonic.
Solving equation (\ref{correction}) provides a correction term $d_E s$ that is added to
$\hat \omega$ to make it globally harmonic.
We denote the correction term in a neighborhood of the boundary by
$\omega_c$ and decompose the harmonic
representative $\omega$ as $\omega = \omega_0+ \omega_c$ in that neighborhood.
Note that, since $ \omega_0$ is harmonic, the correction term $\omega_c$ will 
also be harmonic in that neighborhood.

The standard forms are all of the form (\ref{omegadecomp}) and their real parts
satisfy equation (\ref{etaeqn}).  Thus, being able to write $\omega$ in
this form is equivalent to being able to solve (\ref{correction}) in such a way 
that $\omega_c$ can be written in this form.
Then, in a neighborhood of the boundary, 
we write the real part of
$\omega$ as the sum of the real part of $\omega_0$ and that of $\omega_c$, 
$\eta = \eta_0 + \eta_c$.   Both $\eta_0$ and $\eta_c$
satisfy equation (\ref{etaeqn}) in that neighborhood.

Using this decomposition of $\eta$ on the boundary, we can try to compute
the boundary term $b_R(\eta,\eta)$.
In \cite{HK2} we saw that 
the cross-terms vanish so that
the boundary term is simply the sum of two boundary terms:
\begin{eqnarray} \label{bdrydecomp} \b_R(\eta,\eta) = \b_R(\eta_0,\eta_0)
+ \b_R(\eta_c,\eta_c).
\end{eqnarray}

Since the standard forms are quite explicit, it is fairly easy to find conditions
under which the term $\b_R(\eta_0,\eta_0)$ is non-positive and to estimate
its value in general.  Thus, we finally come to the boundary value problem
we wish to solve:

\hfill
\eject

\noindent {\bf Boundary Value Problem:}
{\em Find boundary conditions on the real-valued $1$-form $\tau$ so that
there is always a unique solution to equation (\ref{taueqn}) when $\zeta$ is
smooth on all of $\N$, including the boundary.  Furthermore,
these boundary conditions
must ensure that $\omega_c$ satisfies (\ref{omegadecomp}), hence that
$\eta_c$ satisfies equation (\ref{etaeqn}).  Finally, the boundary term, 
$\b_{R}(\eta_c,\eta_c)$,
in the Weitzenb\"ock formula (\ref{bdrydecomp}) must always be non-positive}.

\section{The Weitzenb\"ock correction term}\label{weitz}

In this section we derive a general formula for a boundary integral
(see (\ref{bdef}) below)
which we refer to as the {\em \W correction term}, that arises in
the \W formula (\ref{bdryeq}) for harmonic infinitesimal deformations of 
a compact hyperbolic $3$-manifold with boundary.  We then specialize
to the special case of interest in this paper, when the boundary is tubular.
We further compute the boundary term in this case under the hypothesis of 
specific boundary conditions.  In the next section we show that such
boundary conditions can always be realized.

Let $M$ be an oriented compact hyperbolic 3-manifold with boundary $\bd M$
and let $E$ be the bundle of (germs of) infinitesimal isometries on $M$.
We denote by $d_E$ the coboundary operator on smooth $E$-valued $i$-forms on $M$;
the latter are denoted by $\Omega^i(M;E)$.
This operator satisfies the equation $d_E^2 = 0$ and $H^1(M;E)$, the first cohomology
of $M$ with coefficients in $E$, is defined to be the $d_E$-closed $E$-valued
$1$-forms modulo those of the form $d_E s$ where $s$ is an $E$-valued $0$-form;
i.e., a global section of $E$.  This cohomology group represents the
(scheme of) infinitesimal hyperbolic deformations of $M$.

As discussed in the previous section, a boundary integral, $b_R(\eta,\eta)$, occurs in
the \W formula that holds for a class of harmonic ($d_E$-closed and co-closed) $E$-valued
$1$-forms.  In the case when $M$ has tubular boundary, these harmonic forms 
are constructed by adding a coboundary of the form $d_E s$ to a representative in
$H^1(M;E)$ which is in a standard form near the boundary.  In particular, 
we are interested in the contribution to the boundary integral coming from
this coboundary.  Because of the decomposition (\ref{bdrydecomp}) 
of the boundary integral, this contribution can be computed as a boundary
integral involving only the $E$-valued $1$-form $d_E s$. 
In this section we compute this boundary integral, on a general compact 
hyperbolic $3$-manifold with boundary, for any 
$E$-valued $1$-form that is of the form $d_E s$.

In the previous section we observed that the bundle $E$ can be identified
with the complexified tangent bundle $TM \otimes \C$. Then, a global
section $s$ of $E$ can be written as  $s = u +  \tilde u \, i$ where
$u,\tilde u$ are global vector fields on $M$.  Similarly we can decompose 
$d_E s \in \Omega^1(M;E)$ into its real and imaginary parts 
$d_E s = \eta_c + \tilde \eta_c i$. Both $\eta_c$ and  $\tilde \eta_c$ 
are vector field valued $1$-forms; i.e., elements of $\Omega^1(M;TM)$. 
They can equivalently be viewed as elements of ${\rm Hom} \,(TM,TM)$.
In \cite{HK1} we computed that
\begin{eqnarray}d_E s = \eta_c + \tilde \eta_c i = 
(Du -\rot_{\tilde u}) + ( D\tilde u + \rot_u)i
\label{d_Es}
\end{eqnarray}
where $Du \in \Omega^1(M;TM)$ is the covariant derivative of
the vector field $u$
and $\rot_u \in {\rm Hom} \,(TM,TM)$ at any point $p \in M$ 
is the infinitesimal rotation determined by the tangent vector
$u(p) \in T_p M$. Thus for each tangent vector $X \in TM$,
\begin{eqnarray}Du(X) = \nabla_X u \text{~~~and~~~} \rot_u(X)= u \times X,
\label{DuX}\end{eqnarray}
where $\times$ denotes the cross product defined by the 
Riemannian metric and orientation on $M$.

We now define the boundary integral of interest to us.  For any element
$d_E s = \eta_c + \tilde \eta_c i\in \Omega^1(M;E)$ we define the 
{\it \W correction term} by   
\begin{equation} \label{bdef} b = \int_{\bd M} \eta_c \wedge \tilde\eta_c,
\end{equation}
where the boundary is oriented with respect to the inward normal.
Of course, this integral can be defined for any element of $\Omega^1(M;E)$,
decomposed into its real and imaginary part.  However, we will only be interested 
in this section in computing it for those elements which are coboundaries;
hence, the name ``correction term".  Much of our computation is valid for
any such element, but we will then specialize to the case where both
$\eta_c$ and $\tilde \eta_c$ are symmetric which is the case that will
arise during the process of finding a harmonic representative discussed
in the previous section.  In that situation we will also have the relation
$\tilde \eta_c = \hat* D\eta_c$ from which it follows immediately that
the \W correction term equals $- ~ b_R(\eta_c, \eta_c)$, where $b_R$
is defined by (\ref{b_defn}).  As discussed in the previous section,
because of the \W formula (\ref{bdryeq}) and its decomposition (\ref{bdrydecomp})
we will be interested in finding boundary conditions on $s$ that will
guarantee that $ b_R(\eta_c, \eta_c)$ is non-positive.
Thus, we will be interested in conditions that will imply that 
the \W correction term is \emph{non-negative}.

In order to compute this boundary integral, it is useful to decompose
sections and forms into their tangential and normal parts near the boundary.
Specifically, the surfaces equidistant from $\bd M$ give a foliation in a 
neighborhood of $\bd M$ in $M$, and there is a unit vector field $\n$ consisting
of normal vectors to these equidistant surfaces pointing
inwards from $\bd M$.
The vector field $u$ on $M$ can be decomposed
near $\bd M$ as
$$u = v + h \n$$
where $v$ is the component tangent to the equidistant surfaces
and $h = u \cdot n$ is the component in direction of the normal $\n$.
(We use $\cdot$ to denote the Riemannian inner product on $M$.)
Similarly, we write
$$\tilde u = \tilde v + \tilde h \n.$$

As in (\ref{d_Es}) above we write $d_E s = \eta_c + \tilde \eta_c i$ where
$s = u + \tilde u i$ is a global section, decomposing both $s$ and $d_E s$ into their real and imaginary parts.  
We are only interested in the values of 
$\eta_c$ and $\tilde \eta_c$ restricted to $\bd M$.
Viewed as a $TM$-valued $1$-form there, for each $X \in T_p(\bd M)$ we decompose $\eta_c(X)$ 
into a tangential part $G(X)$ and normal part $F(X)$.  We can then write
$$\eta_c(X) = G (X) + F(X) \n, \text{ where } G(X) \in T_p(\bd M),~~ F(X) \in
\R.$$
Similarly we decompose $\tilde \eta_c(X)$ into a tangential part $\tilde G(X)$
and a normal part $\tilde F(X)$.

Finally, let $S : T(\bd M) \to T(\bd M)$
denote the {\em shape operator} 
defined by $S(X) = \nabla_X n$
where $n$ is the {\em inward} unit normal to $\bd M$.
Then $S$ is a self-adjoint operator whose 
eigenvalues are the principal
curvatures of $\bd M$, $\tr S = H$ is the mean curvature of $\bd M$
and $\det(S) = K_{ext}$ is the extrinsic curvature of $\bd M$.
(Note that with our sign convention for $S$, the principal curvatures are positive when $\bd M$ is concave.)

With this notation established, we make the following computation:

\begin{lemma} \label{FG}
Let $\sigma, \tilde \sigma$ be the $1$-forms on $\bd M$ dual
to the vector fields $v, \tilde v$ on $\bd M$. Then, using the
notation defined above,
$G,\tilde G : T_p (\bd M) \to T_p (\bd M)$
and $F, \tilde F : T_p (\bd M) \to \R$ are given by
\begin{equation}
G = \bar D v + h S - \tilde h J, \qquad
\tilde G = \bar D \tilde v + \tilde h S + h J
\end{equation}
and
\begin{equation}
F = dh - S \sigma - * \tilde \sigma , \qquad
\tilde F = d \tilde h -S \tilde \sigma + * \sigma
\end{equation}
where $\bar D$ is the exterior covariant derivative on $\bd M$,
$S$ is defined on $1$-forms $\sigma$ by
$S\sigma(X)=Sv \cdot X =\sigma(SX)$, 
and $J: T(\bd M) \to T(\bd M)$ is the rotation by $\pi/2$ given by
$JX = \n \times X$ (so $J^2 = - \text{identity}$).
\end{lemma}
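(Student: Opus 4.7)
The plan is to perform a direct computation: start from the expression $d_E s = (Du - \rot_{\tilde u}) + (D\tilde u + \rot_u)i$ in (\ref{d_Es}), substitute $u = v + h\n$ and $\tilde u = \tilde v + \tilde h \n$, then decompose into components tangent and normal to $\bd M$ using the Gauss--Weingarten equations of the embedding $\bd M \subset M$.

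The first step is to compute $\nabla_X u$ for $X \in T_p(\bd M)$. Writing $\nabla_X u = \nabla_X v + (Xh)\n + h \nabla_X \n$, I would use the Weingarten equation $\nabla_X \n = SX$ (by the defining relation $S(X) = \nabla_X n$) and the Gauss formula
\begin{equation*}
\nabla_X v = \bar\nabla_X v - (SX \cdot v)\,\n,
\end{equation*}
which follows from differentiating $v \cdot \n = 0$. This yields tangent part $\bar D v(X) + hSX$ and normal part $Xh - SX\cdot v$. Since $\sigma(Y) = v\cdot Y$ and $S$ is self-adjoint, $SX\cdot v = v\cdot SX = \sigma(SX) = (S\sigma)(X)$, so the normal part equals $dh(X) - (S\sigma)(X)$, exactly as required. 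The same calculation applied to $\tilde u$ gives the corresponding contributions for $\tilde \eta_c$.

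Next I would handle the rotation terms $\rot_{\tilde u}(X) = \tilde u \times X$ and $\rot_u(X) = u\times X$ using (\ref{DuX}). The splitting $\tilde u \times X = \tilde v \times X + \tilde h (\n \times X)$ has a tangential piece $\tilde h \n\times X = \tilde h JX$ by definition of $J$, and a normal piece coming from $\tilde v \times X$. For two tangent vectors, $\tilde v\times X$ is a multiple of $\n$, and its coefficient is $(\tilde v \times X)\cdot \n = \det(\tilde v, X, \n) = (\n\times \tilde v)\cdot X = J\tilde v \cdot X$. Identifying the $1$-form dual to $J\tilde v$ with $*\tilde\sigma$ (this is the defining property of the $2$-dimensional Hodge star on $\bd M$ in the induced orientation), I get $(\tilde v\times X)\cdot \n = (*\tilde\sigma)(X)$. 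Combining with the analogous expression for $u\times X$ and assembling the tangent/normal parts completes the four identities.

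The only real pitfalls are bookkeeping with signs, namely the minus sign in the Gauss formula coming from the inward choice of $\n$, and the sign that the Hodge star picks up from the orientation on $\bd M$ induced by the inward normal. Once a careful convention is fixed and the identities $JX = \n\times X$, $(*\sigma)(X) = Jv\cdot X$, and $(S\sigma)(X) = Sv\cdot X$ are in hand, the formulas for $G, \tilde G, F, \tilde F$ in the lemma drop out of direct substitution. No further analytic input is needed.
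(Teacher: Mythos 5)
Your proposal is correct and follows essentially the same route as the paper: a direct expansion of $\nabla_X u - \tilde u\times X$ (and its analogue for $\tilde\eta_c$) using the tangential/normal splitting, the Weingarten relation $\nabla_X\n = SX$, self-adjointness of $S$, and the identification of the cross-product terms with $J$ and the boundary Hodge star. The paper obtains the normal component of $\nabla_X v$ by differentiating $v\cdot\n=0$ directly rather than quoting the Gauss formula, but this is the same computation.
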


\begin{proof}
For $X \in T(\bd M)$ we have, using equations (\ref{d_Es}) and (\ref{DuX}), 
\begin{eqnarray*}
\eta_c(X) &=& \nabla_X u  - \tilde u \times X \\
&=& \nabla_X( v + h \n) - ( \tilde v + \tilde h \n) \times X \\
&=& \nabla_X v + h \nabla_X \n +X(h) \n - \tilde v \times X- \tilde h \n
\times X .
\end{eqnarray*}
Thus the tangential part $G : T_p (\bd M) \to  T_p (\bd M)$  is given by
$$G(X) = \bar\nabla_X v + h S(X) - \tilde h \n \times X
= (\bar D v + h S - \tilde h J)(X)$$
where $\bar \nabla, \bar D$ 
denote the
Riemannian connection  and exterior covariant derivative on $\bd M$.
Further
the normal component $F : T(\bd M) \to \R$ is given by
\begin{eqnarray*}
F(X) = \eta_c(X) \cdot \n &=&  X(h) - \tilde v \times X \cdot \n + (\nabla_X
v) \cdot n \\
&=& \grad h \cdot X - \n \times \tilde v \cdot X - v \cdot \nabla_X n\\
&=&( \grad h - J\tilde v - S(v)) \cdot X \\
&=& (dh - * \tilde\sigma - S\sigma)(X).
 \end{eqnarray*}

Similarly, we find $\tilde\eta_c = D \tilde u + \rot_u$ 
has tangential part $\tilde G : T_p (\bd M) \to  T_p (\bd M)$
given by
$$\tilde G(X) = \bar\nabla_X \tilde v + \tilde h S(X) +  h \n \times X
= (\bar D \tilde v + \tilde h S + h J)(X) $$
and normal part $\tilde F : T(\bd M) \to \R$ given by
$$\tilde F(X) = X(\tilde h) + v \times X \cdot \n + (\nabla_X \tilde v) \cdot n
= (d \tilde h + * \sigma - S \tilde \sigma)(X).$$
\end{proof}

We can view $F, \tilde F$ and $G, \tilde G$ as real-valued and
vector-valued $1$-forms on $\bd M$.  It is then possible to 
define the wedge products
$F\wedge \tilde F$ and $G \wedge \tilde G$, where in the latter case we also 
use the dot product on $TM$ from the hyperbolic metric on $M$ to obtain
a real-valued $2$-form.  Then our boundary term can be expressed in
terms of these wedge products as
$$b = \int_{\bd M} \eta_c \wedge \tilde\eta_c =
\int_{\bd M} F\wedge\tilde F +G\wedge\tilde G.$$

We will now compute the two summands in this expression separately.  
First recall that if $\omega$ is a 1-form on a Riemannian 3-manifold $M$,
then its exterior derivative $d\omega$ satisfies 
\begin{equation} \label{extd1}
d\omega(X,Y) = X \omega (Y) - Y \omega(X) - \omega([X,Y])
\end{equation}
for all vector fields $X,Y$ on $M$. We can also rewrite this using 
covariant derivatives:
\begin{equation} \label{extd2}
d\omega(X,Y) = \nabla_X \omega \,(Y) - \nabla_Y \omega \,(X)
\end{equation}
since $\nabla_X Y - \nabla_Y X = [X,Y]$.

To analyze the boundary term $\int_{\bd M} G \wedge \tilde G$
we will use the following.

\begin{lemma} Let $v,\tilde v$ be vector fields on $\bd M$ with dual 1-forms $\sigma,\tilde\sigma$, and let
$dA$ denote the area $2$-form on $\bd M$. Denote by $\delta$ the adjoint of 
the exterior derivative on $\bd M$. Then
\begin{enumerate}
\item $J \wedge S = \tr S \, dA = H \, dA$ where $H$ is the mean curvature
of $\bd M$,  % with respect to the {\em outward} unit normal $-n$.
\item $J \wedge \bar Dv = -*\delta \sigma$,    
\item $\bar Dv \wedge S  =d(S\sigma)$,     
\item $S\wedge S=0$,
\item $J\wedge J = 0$,
\item $\int_{\bd M} \bar Dv \wedge \bar D\tilde v 
= \la K \sigma, *\tilde \sigma \ra$,
where $K$ is the Gaussian curvature of $\bd M$
and  $\la \cdot, \cdot \ra$ is the $L^2$ inner product on $\bd M$.
\end{enumerate}
\end{lemma}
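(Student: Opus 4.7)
The plan is to verify each identity pointwise in a local positively-oriented orthonormal frame $e_1, e_2$ on $\bd M$, where the orientation is chosen so that $(e_1, e_2, \n)$ is right-handed with $\n$ the inward unit normal; this convention gives $Je_1 = e_2$ and $Je_2 = -e_1$. For any pair of $T(\bd M)$-valued $1$-forms $\alpha, \beta$, the wedge-plus-dot-product satisfies $(\alpha\wedge\beta)(e_1, e_2) = \alpha(e_1)\cdot\beta(e_2) - \alpha(e_2)\cdot\beta(e_1)$ and $dA(e_1, e_2) = 1$. Identities (1), (4), (5) then drop out: for (1),
\[ (J\wedge S)(e_1, e_2) = e_2\cdot Se_2 + e_1\cdot Se_1 = \tr S = H, \]
and (4), (5) are antisymmetric self-wedges that vanish by inspection.

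For (2), the same calculation gives $(J \wedge \bar D v)(e_1, e_2) = e_1\cdot\nabla_{e_1}v + e_2\cdot\nabla_{e_2}v = \div_{\bd M}(v) = -\delta\sigma$, which matches the value of $-*\delta\sigma$ on $(e_1, e_2)$ since the $0$-form $-\delta\sigma$ corresponds to the $2$-form $-\delta\sigma \, dA$.

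Identity (3) is the only one where the ambient hyperbolic geometry enters. Expanding
\[ d(S\sigma)(e_1, e_2) = \nabla_{e_1}(Sv)\cdot e_2 - \nabla_{e_2}(Sv)\cdot e_1 \]
via the product rule $\nabla(Sv) = (\nabla S)v + S(\nabla v)$ separates the result into an ``$(\nabla S)v$''-piece plus an ``$S(\nabla v)$''-piece. The second piece equals $(\bar D v \wedge S)(e_1, e_2)$ by self-adjointness of $S$, and the first piece vanishes by the Codazzi equation $(\nabla_X S)Y = (\nabla_Y S)X$, which holds because $\bd M$ is a hypersurface in a space of constant sectional curvature.

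The main step is (6), which I handle by integration by parts. Define the real $1$-form $\beta$ on $\bd M$ by $\beta(X) = v\cdot \nabla_X \tilde v$. Using the formula $d\beta(X,Y) = X\beta(Y) - Y\beta(X) - \beta([X,Y])$ with Riemannian compatibility and $[X,Y] = \nabla_X Y - \nabla_Y X$ produces
\[ d\beta = \bar D v \wedge \bar D\tilde v + v\cdot R^{\bd M}(\,\cdot\,,\,\cdot\,)\tilde v, \]
where $R^{\bd M}$ is the Riemann tensor of $\bd M$. In two dimensions $R^{\bd M}(X,Y)Z = K\bigl(\langle Y,Z\rangle X - \langle X,Z\rangle Y\bigr)$, so the curvature term evaluates to $K(\sigma\wedge\tilde\sigma)$. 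Since $\bd M$ is closed, $\int d\beta = 0$ by Stokes, and the elementary two-dimensional identity $\sigma\wedge\tilde\sigma = -(\sigma\cdot *\tilde\sigma)\, dA$ converts $-\int K \sigma \wedge \tilde\sigma$ into $\langle K\sigma, *\tilde\sigma\rangle$. The main obstacle I anticipate is not conceptual but book-keeping: keeping the orientation convention (inward normal) consistent across (1)--(2), getting the two-dimensional Hodge-star and $\delta$ signs correct in (2) and (6), and correctly invoking Codazzi in (3); the last of these is the only genuinely geometric input.
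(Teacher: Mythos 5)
Your proposal is correct and follows essentially the same route as the paper: pointwise orthonormal-frame evaluation of the wedge products for (1), (2), (4), (5), the Codazzi--Mainardi equation for (3) (the paper phrases it via the second fundamental form $II$ rather than $\nabla S$, but it is the same cancellation), and for (6) your $1$-form $\beta(X)=v\cdot\nabla_X\tilde v$ is exactly the paper's $v\wedge\bar D\tilde v$, so your Stokes argument with the $2$-dimensional curvature identity reproduces the paper's computation $D^2\tilde v=\bar R(\cdot,\cdot)\tilde v=-KJ\tilde v\,dA$. No gaps.
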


\begin{proof} Let $e_1, e_2$ be an oriented orthonormal basis for $T_p (\bd M)$.
Then for any linear operator $L : T_p (\bd M)\to T_p (\bd M)$ we have
\begin{eqnarray*}
(J\wedge L) (e_1, e_2) &=& J(e_1) \cdot L(e_2) - J(e_2) \cdot L(e_1) \\
&=& e_2 \cdot L(e_2) + e_1 \cdot L(e_1) = \tr L.
\end{eqnarray*}
Hence $J \wedge S = \tr S \, dA = H \, dA$ and
$J \wedge \bar Dv = \tr \bar Dv \, dA = \div v \, dA = -*\delta \sigma$.

Next we prove part (3). For $X,Y \in T_p(\bd M)$ we have
\begin{eqnarray*}
(\bar Dv \wedge X) (X,Y) &=& \nabla_X v \cdot S(Y) - \nabla_Y v \cdot S(X) \\
&=&  II(\nabla_X v ,Y) - II(\nabla_Y v,X) \\
&=& X II(v ,Y) - II(v ,\nabla_X Y) - (\nabla_X II)(v ,Y) \\
&& \quad - \left( Y II(v ,X) - II(v ,\nabla_Y X) - (\nabla_Y II)(v ,X) \right)
\end{eqnarray*}
where $II(X,Y)=S(X) \cdot Y = X \cdot S(Y)$ is the second fundamental form.
But $(\nabla_X II)(v ,Y) = (\nabla_Y II)(v,X) $ by the Codazzi-Mainardi
equations for a space of constant curvature (see, for example,
\cite[chap.1, Thm 11 and Cor 12]{spivak}). Hence
\begin{eqnarray*}
(\bar Dv\wedge S) (X,Y) &=&  X II(v ,Y) - Y II(v ,X) - II( v, [X,Y]) \\
&=& X (Sv \cdot Y) - Y (Sv \cdot X) - Sv \cdot[X,Y] \\
&=& X(S\sigma(Y))-Y(S\sigma(X))-S\sigma([X,Y]) \\
&=& d(S\sigma)(X,Y).
\end{eqnarray*}

Finally, we have $S\wedge S=0$ and $J\wedge J = 0$ by the skew-symmetry
of the wedge product, and for vector fields $X,Y$ on $\bd M$ we have
$$D^2 v(X,Y) =\bar{\nabla}_X \bar{\nabla}_Y v -  \bar{\nabla}_Y \bar{\nabla}_X v
-\bar{\nabla}_{[X,Y]} v =\bar R(X,Y) v,$$
where $\bar R$ is the Riemann curvature tensor on $\bd M$.
But $\bar R(e_1,e_2)$ is infinitesimal rotation by $-K$, 
hence
$D^2 v = - K J v \, dA$ where $dA$ is the area 2-form on $\bd M$ and
$$\int_{\bd M} \bar Dv \wedge \bar D\tilde v =
\int_{\bd M}d(v \wedge \bar D\tilde v) -v \wedge D^2 \tilde v= 
\int_{\bd M}  K v \cdot J \tilde{v} \, dA 
= \la K \sigma, * \tilde \sigma \ra$$
by Stokes' theorem. This completes the proof of the Lemma.
\end{proof}

Using this result and Lemma \ref{FG} we obtain
\begin{lemma} \label{GwedgeG}
$$\int_{\bd M} G \wedge \tilde G =
\la \tilde h , *d{S\sigma} \ra
 - \la h , * d {S\tilde \sigma} \ra
+ \la h , \delta \sigma \ra + \la \tilde h , \delta \tilde \sigma \ra
-( \la H h, h \ra + \la H \tilde h , \tilde h \ra) +  \la K \sigma, * \tilde{\sigma} \ra ,
$$
where $\la \alpha , \beta \ra = \int_{\bd M} \alpha \wedge *\beta$ 
denotes the $L^2$ inner product on $\bd M$.
\end{lemma}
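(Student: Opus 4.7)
The plan is to substitute $G = \bar D v + h S - \tilde h J$ and $\tilde G = \bar D\tilde v + \tilde h S + h J$ from Lemma \ref{FG} into $G \wedge \tilde G$, expand into nine summands, and apply the six identities of the preceding lemma to each. The whole calculation is bookkeeping; the only subtlety is in tracking the signs that come from the antisymmetry of the wedge on $T(\bd M)$-valued $1$-forms.

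I would begin by noting that this wedge is built from the symmetric fibrewise inner product on vector-valued parts and is therefore antisymmetric in its form factors, so $A \wedge A = 0$ for any such form. This immediately eliminates the summands $h\tilde h\,S\wedge S$ and $-h\tilde h\,J\wedge J$ via parts (4) and (5). I would then group the remaining seven summands according to which identity is used: parts (1) and (3) for the four terms pairing $S$ or $J$ with each other or with $\bar D$ (contributing the $H$-terms $-\la Hh,h\ra, -\la H\tilde h,\tilde h\ra$ and the curl terms $\la\tilde h,{*d(S\sigma)}\ra$ and $-\la h,{*d(S\tilde\sigma)}\ra$); part (2) for the two terms of the form $\bar Dv \wedge J$ and $J \wedge \bar D\tilde v$ (yielding the divergence terms $\la h,\delta\sigma\ra$ and $\la\tilde h,\delta\tilde\sigma\ra$); and part (6) for the single remaining term $\bar Dv \wedge \bar D\tilde v$ (yielding the Gaussian-curvature term $\la K\sigma,{*\tilde\sigma}\ra$). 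Integration over $\bd M$ converts pointwise pairings of a function against a $2$-form into $L^2$ inner products, since on a surface the $\hat *$ operator on $2$-forms is simply division by $dA$.

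The only step that demands genuine attention is the sign accounting at the three places where the order of a wedge must be reversed to apply the preceding lemma as stated, namely $S\wedge J$ vs.\ $J\wedge S$, $\bar Dv\wedge J$ vs.\ $J\wedge\bar Dv$, and $S\wedge\bar D\tilde v$ vs.\ $\bar D\tilde v\wedge S$. Beyond this routine care I do not anticipate any real obstacle: summing the seven surviving contributions assembles directly into the formula in the statement.
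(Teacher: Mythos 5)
Your proposal is correct and is exactly the computation the paper intends (the paper states the lemma as an immediate consequence of Lemma \ref{FG} and the six wedge identities, with no further detail). Your expansion into nine summands, the elimination of the two self-wedges by antisymmetry, and in particular your identification of the three terms requiring a sign flip ($hS\wedge hJ$, $\bar Dv\wedge hJ$, and $hS\wedge\bar D\tilde v$) reproduce the stated formula term by term.
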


Next we study the boundary term $\int_{\bd M} F \wedge \tilde F = - \la F, *\tilde F \ra$. 
From Lemma \ref{FG} we have
$$F = dh - S \sigma - * \tilde \sigma \qquad{\rm and }\qquad
\tilde F = d \tilde h -S \tilde \sigma + * \sigma.$$
Using this we obtain
\begin{eqnarray*}
\int_{\bd M} F \wedge \tilde F 
&=& - \la dh - S \sigma - * \tilde \sigma, *d \tilde h -*S \tilde \sigma - \sigma \ra \\
&=&  - \la dh,*d \tilde h \ra + \la dh, *S \tilde \sigma + \sigma \ra
+\la S \sigma + * \tilde \sigma, *d\tilde h \ra
-\la S \sigma + * \tilde \sigma, *S \tilde \sigma + \sigma \ra.
\end{eqnarray*}

But the first term vanishes since
$$-\la dh, *d \tilde h \ra = \int_{\bd M} dh \wedge d\tilde h  = 
\int_{\bd M} d(h \wedge d\tilde h) =0,$$
so we obtain

\begin{lemma}
$$
\int_{\bd M} F \wedge \tilde F =
\la dh, *S \tilde \sigma + \sigma \ra
+\la S \sigma + * \tilde \sigma, *d\tilde h \ra
-\la S \sigma + * \tilde \sigma, *S \tilde \sigma + \sigma \ra.
$$
\end{lemma}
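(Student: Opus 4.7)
The plan is that this lemma is essentially a direct expansion: once we substitute the formulas $F = dh - S\sigma - *\tilde\sigma$ and $\tilde F = d\tilde h - S\tilde\sigma + *\sigma$ from Lemma \ref{FG} into the wedge product integral, the identity should fall out after exactly one observation (the vanishing of the $dh \wedge d\tilde h$ term by Stokes).

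First I would convert the wedge product integral to an $L^2$ inner product. For any two real-valued $1$-forms $\alpha,\beta$ on the oriented surface $\bd M$ one has $\alpha \wedge \beta = -(\alpha \wedge *(*\beta)) = -\langle \alpha, *\beta\rangle_{pt}\,dA$ (the sign comes from $**=-1$ on $1$-forms on a $2$-manifold), so $\int_{\bd M} F\wedge \tilde F = -\langle F, *\tilde F\rangle$.

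Next I would simply expand $\langle F,*\tilde F\rangle$ using bilinearity into the four pairings
\begin{equation*}
\langle dh, *d\tilde h\rangle,\quad -\langle dh, *S\tilde\sigma+\sigma\rangle,\quad -\langle S\sigma+*\tilde\sigma, *d\tilde h\rangle,\quad \langle S\sigma+*\tilde\sigma, *S\tilde\sigma+\sigma\rangle,
\end{equation*}
being careful to track signs (the $*$ on $\tilde F$ sends $+*\sigma$ to $-\sigma$ and $-S\tilde\sigma$ to $-*S\tilde\sigma$, exactly as displayed in the line preceding the lemma). After reversing the outer sign, three of the four pairings match the three terms on the right-hand side of the lemma.

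The only remaining step is the vanishing of $\langle dh, *d\tilde h\rangle$, and this is the one place a genuine argument (rather than algebra) is needed. Writing $-\langle dh, *d\tilde h\rangle = \int_{\bd M} dh \wedge d\tilde h = \int_{\bd M} d(h\,d\tilde h)$ and invoking Stokes' theorem on the closed $2$-manifold $\bd M$ (which has no boundary, being a union of tubular tori) gives zero. This is the ``main obstacle,'' such as it is; everything else is bookkeeping. No regularity issues arise because $h,\tilde h$ are smooth by construction.
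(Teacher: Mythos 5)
Your proof is correct and follows exactly the same route as the paper: convert $\int_{\bd M} F\wedge\tilde F$ to $-\la F,*\tilde F\ra$, substitute the expressions for $F$ and $\tilde F$ from Lemma \ref{FG} (using $**=-1$ on $1$-forms to get $*\tilde F = *d\tilde h - *S\tilde\sigma - \sigma$), expand bilinearly, and kill the $\la dh,*d\tilde h\ra$ term via $\int_{\bd M} dh\wedge d\tilde h = \int_{\bd M} d(h\,d\tilde h)=0$ on the closed boundary tori. Nothing to add.
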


Combining the previous results gives the following
\begin{theorem} \label{generalweitzbdry} Let $s = u + i \, \tilde u$ be a section of the bundle $E$ and 
let $d_E s = \eta_c + i \tilde \eta_c$ denote its image under the coboundary operator 
$d_E s$.  Suppose that $u = v + h n$, $\tilde u = \tilde v + \tilde h n$ are
the decompositions into tangential and normal parts of the vector fields $u, \tilde u$.  
Denote by $\sigma, \tilde \sigma$ the $1$-forms on that are dual on $\bd M$ 
to the vector fields $v, \tilde v$, respectively.  Then the Weitzenb\"ock boundary term 
equals
\begin{eqnarray*}
b=\int_{\bd M} \eta_c \wedge \tilde \eta_c
&=& 2( \la h, \delta \sigma \ra + \la \tilde h, \delta \tilde \sigma \ra)
-\la S \sigma + * \tilde \sigma, *S \tilde \sigma + \sigma \ra \\
&& -( \la H h, h \ra + \la H \tilde h , \tilde h \ra) +  \la K \sigma, * \tilde{\sigma} \ra .
\end{eqnarray*}
\end{theorem}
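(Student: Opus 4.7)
The plan is to combine the two already-established boundary integral formulas: the expression for $\int_{\bd M} G \wedge \tilde G$ from Lemma \ref{GwedgeG}, and the expression for $\int_{\bd M} F \wedge \tilde F$ from the lemma immediately preceding the theorem. These two pieces together comprise $b$ via the decomposition $\eta_c \wedge \tilde \eta_c = F \wedge \tilde F + G \wedge \tilde G$ coming from splitting $\eta_c$ and $\tilde \eta_c$ into their tangential and normal parts along $\bd M$.

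The real work is to massage the mixed terms in $\int_{\bd M} F \wedge \tilde F$, using integration by parts on the \emph{closed} surface $\bd M$, so that they combine with or cancel the corresponding terms in Lemma \ref{GwedgeG}. Since $\bd(\bd M) = \emptyset$ we have $\la d\phi, \alpha \ra = \la \phi, \delta \alpha \ra$ for any function $\phi$ and $1$-form $\alpha$ on $\bd M$. I would use this to rewrite $\la dh, \sigma \ra = \la h, \delta \sigma \ra$ and, using that $*$ is an $L^2$-isometry, $\la *\tilde\sigma, *d\tilde h \ra = \la \tilde \sigma, d\tilde h \ra = \la \tilde h, \delta \tilde \sigma \ra$. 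For the shape-operator terms, the key identity is that on a $2$-manifold $*^2 = -1$ on $1$-forms, so $\delta(*\alpha) = -*d*(*\alpha) = *d\alpha$; combined with the antisymmetry $\la \alpha, *\beta \ra = -\la *\alpha, \beta \ra$ for $1$-forms on $\bd M$, this yields $\la dh, *S\tilde\sigma \ra = \la h, *d(S\tilde\sigma) \ra$ and $\la S\sigma, *d\tilde h \ra = -\la \tilde h, *d(S\sigma) \ra$.

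After substituting these identities into the expansion of $\int_{\bd M} F \wedge \tilde F$ and adding the result to the formula from Lemma \ref{GwedgeG}, the bookkeeping is straightforward. The term $\la \tilde h, *d(S\sigma) \ra$ from Lemma \ref{GwedgeG} cancels exactly against $-\la \tilde h, *d(S\sigma) \ra$ produced above, and similarly $-\la h, *d(S\tilde\sigma) \ra$ from Lemma \ref{GwedgeG} cancels $+\la h, *d(S\tilde\sigma) \ra$. The contributions $\la h, \delta\sigma \ra + \la \tilde h, \delta\tilde\sigma \ra$ occur once from each of the two integrals, producing the factor of $2$ in the stated formula. The curvature terms $-(\la Hh,h \ra + \la H\tilde h, \tilde h \ra) + \la K\sigma, *\tilde\sigma \ra$ are inherited unchanged from Lemma \ref{GwedgeG}, while $-\la S\sigma + *\tilde\sigma, *S\tilde\sigma + \sigma \ra$ is inherited unchanged from the $F \wedge \tilde F$ lemma. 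Collecting everything gives precisely the claimed formula for $b$.

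There is no analytic obstacle — the proof is pure bookkeeping. The main place to slip up is keeping the Hodge-star sign conventions consistent on the two-dimensional surface $\bd M$ (in particular the relations $** = -1$ on $1$-forms and $\la \alpha, *\beta \ra = -\la *\alpha, \beta \ra$), and correctly matching each of the cross terms arising from the expansion of $\la S\sigma + *\tilde\sigma, *d\tilde h \ra$ and $\la dh, *S\tilde\sigma + \sigma \ra$ against the two $*d(S\sigma)$-type terms already in Lemma \ref{GwedgeG}. Once those identities are organized, the cancellations and the emergence of the factor of $2$ on the divergence terms are automatic.
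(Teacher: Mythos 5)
Your proposal is correct and follows essentially the same route as the paper: sum the two lemmas for $\int_{\bd M} F\wedge\tilde F$ and $\int_{\bd M}G\wedge\tilde G$, then simplify using exactly the four identities $\la dh,\sigma\ra=\la h,\delta\sigma\ra$, $\la *\tilde\sigma,*d\tilde h\ra=\la\tilde h,\delta\tilde\sigma\ra$, $\la dh,*S\tilde\sigma\ra=\la h,*d(S\tilde\sigma)\ra$, and $\la S\sigma,*d\tilde h\ra=-\la\tilde h,*d(S\sigma)\ra$, which produce the cancellations and the factor of $2$ just as you describe. The only difference is that you spell out the Hodge-star sign conventions ($**=-1$, $\delta(*\alpha)=*d\alpha$) behind those identities, which the paper merely asserts.
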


\begin{proof} From the previous lemmas we have
\begin{eqnarray*}
\int_{\bd M} \eta_c \wedge \tilde \eta_c &=& \int_{\bd M} F\wedge\tilde F +G\wedge\tilde G \\ 
&=&  \la dh, *S \tilde \sigma \ra + \la dh, \sigma \ra
+\la S \sigma, *d\tilde h \ra  +  \la* \tilde \sigma, *d\tilde h \ra \\
&& -\la S \sigma + * \tilde \sigma, *S \tilde \sigma + \sigma \ra \\
&& 
 - \la h , * d {S\tilde \sigma} \ra
+ \la h , \delta \sigma \ra + \la \tilde h , *d{S\sigma} \ra +  \la \tilde h , \delta \tilde \sigma \ra \\
&& -( \la H h, h \ra + \la H \tilde h , \tilde h \ra) +  \la K \sigma, * \tilde{\sigma} \ra .
\end{eqnarray*}
We can simplify this sum by noting that
$\la dh, \sigma \ra = \la h, \delta \sigma \ra$, 
$\la *\tilde\sigma, *d\tilde h \ra = \la \tilde h, \delta \tilde\sigma \ra$,
 $\la dh, *S\tilde \sigma \ra -\la h, *dS\tilde \sigma \ra   = 0$, and
$ \la S \sigma, *d \tilde h\ra + \la \tilde h, *dS\sigma\ra = 0$.
This gives the result.
\end{proof}

The computation of the boundary term in Theorem \ref{generalweitzbdry} is
valid for general elements $\eta_c, \tilde \eta_c \in \Omega^1(M;TM)$ that
are of the form $d_E s = \eta_c + \tilde \eta_c i$.  However, this boundary term
is primarily of interest when it comes from the \W formula (\ref{bdryeq}), 
as discussed in the previous 
section.  Specifically, we are interested in the case when, in a neighborhood 
of the boundary, both $\eta_c, \tilde \eta_c$
are symmetric and traceless when viewed as elements of ${\rm Hom} \,(TM,TM)$.  
Writing $s = u + i \tilde u$ as before, the condition that $\eta_c$ and $\tilde \eta_c$
are symmetric is equivalent to the equations (derived in \cite[Section 2]{HK1}):
\begin{equation}
2 \tilde u = \curl u, ~ 2 u = - \curl \tilde u \label{3dcurleqn}
\end{equation}

Here we view the curl of a vector field
in $3$-dimensions as itself being a vector field.  Then, on the boundary of $M$, 
the normal component of the $3$-dimensional curl of $u$ is just the (scalar) $2$-dimensional curl of $v$, the tangential part of $u$.  A similar statement holds
for the normal component of $\curl \tilde u$.  Since the normal components of
$u, \tilde u$ equal $h, \tilde h$, respectively, we obtain:
\begin{equation}
2 \tilde h = \curl v = *d \sigma \qquad{\rm ~and~} \qquad 2h = - \curl \tilde v = - * d \tilde \sigma,
\label{2dcurleqn}
\end{equation}
where $\sigma, \tilde \sigma$ are the $1$-forms on $\bd M$ dual to $v, \tilde v$, 
respectively, and $d$ denotes the exterior derivative operating on forms on $\bd M$.

Note that the equations (\ref{3dcurleqn}) only hold in a neighborhood of the boundary.
However, since all our computations are local to the boundary, this will suffice.
It turns out that (\ref{3dcurleqn}) also implies that $\eta_c$ and $\tilde \eta_c$ 
are traceless (using $\div \curl = 0$), but we will not use this in our computations.

Using (\ref{2dcurleqn}) we can rewrite $b$ in terms of $\sigma$ and $\tilde \sigma$.
We compute
$$2\la h, \delta \sigma \ra = -\la *d\tilde \sigma, \delta \sigma \ra = 
- \la \delta(*\tilde \sigma), \delta \sigma \ra= - \la *\tilde \sigma, d\delta \sigma \ra,$$
$$2\la \tilde h, \delta \tilde \sigma \ra = \la *d \sigma, \delta \tilde \sigma \ra = 
- \la d \sigma, d(*\tilde \sigma) \ra = - \la \delta d \sigma, * \tilde \sigma \ra,$$
and note that
$$\la *d\tilde\sigma,*d\tilde\sigma \ra=
\la \delta(*\tilde\sigma),\delta(*\tilde\sigma) \ra, ~~~
\la *d\sigma,*d\sigma \ra= \la d\sigma,d\sigma \ra.$$

Then we obtain
\begin{eqnarray} \label{beqn}
b &=& -\la *\tilde \sigma, d \delta \sigma  + \delta d \sigma \ra
-\la S \sigma + * \tilde \sigma, * S \tilde \sigma  + \sigma \ra \\
\nonumber && -\frac{1}{4}(\la H \delta(*\tilde\sigma),\delta(*\tilde\sigma) \ra
 + \la H d\sigma,d\sigma \ra ) +  \la K \sigma, * \tilde{\sigma} \ra.
 \end{eqnarray}

Equations (\ref{3dcurleqn}) provide relations between $u$ and $\tilde u$,
equations (\ref{2dcurleqn}) coming from the normal component of those relations.
Similarly, the tangential component of (\ref{3dcurleqn}) implies that $\tilde v$, 
hence $\tilde \sigma$, can be expressed in terms of $u$ and its derivatives in
a neighborhood of the boundary.  In the next section we will define boundary
conditions that will allow us to express $\tilde \sigma$ {\emph on $\bd M$} 
purely in terms of $\sigma$ and its tangential derivatives.  In particular,
the expression will not involve the normal component $h$ or normal 
derivatives of $\sigma$.

There are many possible boundary conditions of this sort and we denote
by $A$ a general linear differential operator on $1$-forms on $\bd M$.
Then, as a simplifying notational device, we can express the relation between 
$\tilde \sigma$ and $\sigma$ on $\bd M$ as:
\begin{equation}
\tilde \sigma = *A\sigma \label{Adef}
\end{equation}

Finally, it is useful to define a linear operator $\hat S$ on 1-forms by $\hat S = - * S *$, 
so that $*S\tilde \sigma = \hat S * \tilde \sigma$.  It is easy to check that this 
operator satisfies
$$\hat S S = \det(S) I = K_{ext} I,$$ 
where $K_{ext}$ is the extrinsic curvature of $\bd M$.  It follows, since
the curvature of the ambient hyperbolic manifold equals $-1$, that
$$\hat S S - I = (K_{ext}-1) I = K_{int} I = K I.$$
As before $K$ denotes the intrinsic Gaussian curvature of $\bd M$.

Rewriting equation (\ref{beqn}) using the operators $\hat S$ and $A$ we
find
 \begin{eqnarray*}
b &=& \la A \sigma, \Delta \sigma \ra +
\la (A - S) \sigma, (I - \hat S A) \sigma \ra\\
&&-\frac{1}{4}(\la H \delta A\sigma,\delta A\sigma \ra
 + \la H d\sigma,d\sigma \ra)
 - \la K \sigma, A \sigma \ra.
 \end{eqnarray*}

Hence we obtain
\begin{theorem} \label {harmonicweitzbdry}
If  $2 \tilde h = \curl v = *d \sigma$,  $2h = - \curl \tilde v = - * d \tilde \sigma$, 
$\hat S = - *S*$
and $\tilde \sigma = *A\sigma$,
then the Weitzenb\"ock boundary term is given by
  \begin{equation}
b ~=~ \la (A - S) \sigma, (I - \hat S A) \sigma \ra
+ \la A \sigma, (\Delta-K) \sigma \ra
-\frac{1}{4}(\la H \delta A\sigma,\delta A\sigma \ra
 + \la H d\sigma,d\sigma \ra).\label{bwithA}
 \end{equation}
\end{theorem}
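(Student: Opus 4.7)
The plan is to start from formula (\ref{beqn}) for $b$, which already incorporates the symmetric/traceless hypothesis via the curl relations $2\tilde h = *d\sigma$ and $2h = -*d\tilde\sigma$ and expresses $b$ purely in terms of $\sigma$ and $\tilde\sigma$; I then substitute the boundary condition $\tilde\sigma = *A\sigma$ term by term. The whole proof is a bookkeeping exercise whose only inputs beyond the substitution are the elementary identities $** = -I$ on $1$-forms on the $2$-dimensional surface $\bd M$ and $*S* = -\hat S$ (the latter being just the definition of $\hat S$).

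First I would use $*\tilde\sigma = -A\sigma$ together with $\Delta = d\delta + \delta d$ to merge the first and last terms of (\ref{beqn}) into the single Hodge-type contribution $\la A\sigma, (\Delta - K)\sigma \ra$, freely moving the scalar function $K$ across the inner product.

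Next I would expand the middle quadratic form $-\la S\sigma + *\tilde\sigma, *S\tilde\sigma + \sigma \ra$: the left slot equals $-(A-S)\sigma$, while in the right slot $*S\tilde\sigma = *S*A\sigma = -\hat S A\sigma$, giving $(I - \hat S A)\sigma$. The two minus signs combine to yield $+\la (A-S)\sigma, (I-\hat S A)\sigma \ra$. Finally, for the mean-curvature terms, $\delta(*\tilde\sigma) = -\delta A\sigma$, whose sign squares away to leave $-\frac{1}{4}\la H\delta A\sigma, \delta A\sigma \ra$, while $-\frac{1}{4}\la H\, d\sigma, d\sigma \ra$ passes through unchanged.

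There is no substantive obstacle; the only subtlety worth flagging is the sign coming from $** = -I$ on $1$-forms in dimension two, which at first glance appears to be missing from the stated identity (\ref{bwithA}) but is in fact absorbed correctly into all three pieces as above.
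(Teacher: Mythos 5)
Your proposal is correct and is essentially identical to the paper's own argument: the proof of Theorem \ref{harmonicweitzbdry} is precisely the substitution of $\tilde\sigma = *A\sigma$ into (\ref{beqn}), using $**=-I$ on $1$-forms of $\bd M$ (so $*\tilde\sigma = -A\sigma$) and $*S* = -\hat S$, after which the first and last terms combine into $\la A\sigma,(\Delta-K)\sigma\ra$, the quadratic form becomes $\la (A-S)\sigma,(I-\hat S A)\sigma\ra$, and the sign in $\delta(*\tilde\sigma)=-\delta A\sigma$ squares away in the mean-curvature term, exactly as you describe.
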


We observe that the expression for the boundary term in Theorem \ref{harmonicweitzbdry}
can be viewed as a quadratic form on $1$-forms $\sigma$.  Except for the operator $A$,
the basic terms in this quadratic form come from the geometry of the boundary of $M$.
In particular, $S,K,H$ are the shape operator, Gaussian curvature and mean curvature
of $\bd M$.  So, given the manifold $M$ with its boundary $\bd M$, the only
flexibility we have on this quadratic form is the tangential operator $A$.  We can
attempt to control this operator by our choice of boundary conditions when solving
equation (\ref{taueqn}).  Recall that the $1$-form $\tau$ in (\ref{taueqn}) 
is dual to the vector field $u$ which in turn determines $\tilde u$ by (\ref{3dcurleqn}).

Our goal is to find boundary conditions which determine an operator $A$ with the
property that this quadratic form is positive semi-definite; i.e., so that
the boundary term (\ref{bwithA}) is non-negative for all $\sigma$.

We now specialize to the case when $M$ has {\em tubular boundary}.  Then 
each boundary component is topologically a torus,
and the principal curvatures $k_1, k_2$ are constant so that 
the mean curvature $H = k_1 + k_2$ is constant. The extrinsic curvature is $K_{ext} = k_1 k_2 = 1$ 
and the intrinsic curvature is $K=0$; i.e., the torus is flat.  The operator $\hat S$ 
equals $S^{-1}$ in this case and both $S$ and $\hat S$ are parallel. 
Then the boundary term simplifies to
  \begin{eqnarray} \label{flatbdryeqn}
b &=&  \la (A - S) \sigma, (I - \hat S A) \sigma \ra
+ \la A \sigma, \Delta \sigma \ra \\
\nonumber && -\frac{H}{4}(\la \delta A\sigma,\delta A\sigma \ra
 + \la d\sigma,d\sigma \ra).
 \end{eqnarray}
 
 If we denote by $A_0$ the $0$-th order part of the operator $A$ (i.e., the
 part that involves taking no derivatives), then the $0$-th order part of
 this quadratic form is simply
   \begin{eqnarray*} 
   \la (A_0 - S) \sigma, (I - \hat S A_0) \sigma \ra = 
   - \la (A_0 - S) \sigma, S^{-1} (A_0 - S) \sigma \ra,
   \end{eqnarray*}
 where we have used that fact that $\hat S = S^{-1}$ to obtain the second expression.  
 Since $S^{-1}$ is a positive operator this quantity is non-positive.  Our
 only hope of having a non-negative quadratic form is to choose $A$ so that
 $A_0 = S$.
 
 The computations below show that the choice of $A = S$ in fact does lead
 to a non-negative quadratic form.  However, as will be discussed in the next section, 
 we have been unable to find an elliptic boundary value problem that leads to
 this value of $A$.  Nevertheless, we are able to find such a boundary value
 problem that leads to a slightly perturbed value of $A$ that still defines a
 non-negative quadratic form.  
 
 Suppose the tangential operator $A$  equals
 \begin{equation} A=S+\frac{\varepsilon}{2}\delta d,\label{Avalue} \end{equation}
 where $\varepsilon>0$ is a constant. 
 Thus, in the above Weitzenb\"ock boundary term we have
$A-S = \frac{\varepsilon}{2}\delta d$ and 
$I-\hat S A =I-\hat S S - \frac{\varepsilon}{2}\hat S\delta d= - \frac{\varepsilon}{2}\hat S\delta d.$
 
 In the next section we will show that it is always possible to solve
 equation (\ref{taueqn}) in such a way that the $1$-forms $\sigma, \tilde \sigma$
 arising from the solution satisfy the relation (\ref{Adef}) with this
 value of $A$. 
 For now, we will assume that this can be done and complete the computation
 of the boundary term with this value of $A$.

Now $\delta A = \delta S+\frac{\varepsilon}{2} \delta \delta d = \delta S$
and $\la \delta d \sigma, d \delta \sigma \ra = \la d \sigma, d d \delta \sigma \ra =0$,
so equation (\ref{flatbdryeqn}) becomes
 \begin{eqnarray*}
b = \int_{\bd M} \eta_c \wedge \tilde \eta_c
&=& \la S \sigma, \Delta \sigma \ra
 - \frac{H}{4}\left( \la \delta S \sigma, \delta S \sigma \ra + \la d\sigma, d \sigma \ra \right) \\
 && \quad -\left(\frac{\varepsilon}{2}\right)^2 \la \delta d \sigma, \hat S \delta d \sigma \ra
 + \frac{\varepsilon}{2} \la \delta d \sigma, \delta d  \sigma \ra.
\end{eqnarray*}
and we want to find geometric conditions on $\bd M$ guaranteeing that this
boundary term is non-negative. 

 First we consider $\la \delta S \sigma, \delta S \sigma \ra + \la d\sigma, d \sigma \ra$
 and  $\la S \sigma, \Delta \sigma \ra$.  
Since the metric on $\bd M$ is Euclidean, we can choose a parallel orthonormal
 frame field $e_1, e_2$ on $\bd M$ consisting of eigenvectors of $S$
 with eigenvalues $k_1, k_2$  at every point. 
 Let $\theta_1, \theta_2$ be the dual 1-forms on $\bd M$ and
 write $\sigma = \s_1 \theta_1 + \s_2 \theta_2$. 
 Now, using equation (\ref{extd2}), 
 $$d \sigma = (\nabla_1 \s_2 - \nabla_2 \s_1)\,dA \qquad{\rm and }
 \qquad -\delta(S\sigma) = (k_1 \nabla_1 \s_1 + k_2 \nabla_2 \s_2) \, dA$$
 where  $\nabla_i = \bar \nabla_{e_i}$ for $i = 1,2$.  Hence
\begin{eqnarray*}
(*\delta S \sigma)^2 +(*d\sigma)^2
&=& k_1^2 (\nabla_1 \s_1)^2 + k_2^2 (\nabla_2 \s_2 )^2  
 +  ( \nabla_1 \s_2 )^2 +  ( \nabla_2 \s_1 )^2 \\
 && \quad  
 + 2\left(k_1 k_2 (\nabla_1 \s_1) ( \nabla_2 \s_2) - (\nabla_1 \s_2) (\nabla_2 \s_1)\right) .
 \end{eqnarray*}
 Since $k_1 k_2 =1$ the bracketed terms become
 $$\nabla_1 \s_1 \nabla_2 \s_2 - \nabla_1 \s_2 \nabla_2 \s_1
 = d\s_1 \wedge d\s_2(e_1,e_2),$$
 and their integral over $\bd M$ is
 $$\int_{\bd M} d\s_1 \wedge d\s_2 = \int_{\bd M} d(\s_1 \wedge d\s_2) =0.$$
 Hence
 $$\| \delta S \sigma \|^2 + \| d\sigma \|^2 = 
  k_1^2 \|\nabla_1 \s_1\|^2 + k_2^2 \| \nabla_2 \s_2 \|^2 
 +  \| \nabla_1 \s_2\|^2 + \| \nabla_2 \s_1\|^2 $$
 where $\| \cdot \|$ is the $L^2$-norm on $\bd M$.
Using integration by parts,
 $$\la S \sigma, \Delta \sigma \ra = \int_{\bd M} \sum_{i=1}^2  k_i \s_i \Delta \s_i
 =\sum_{i=1}^2  k_i \| \grad \s_i \|^2 
 = \sum_{i,j=1}^2  k_i \| \nabla_j \s_i \|^2 .$$

 Combining the last two equations and using $k_1 k_2 = 1$ we obtain
 \begin{eqnarray*}
 && 4 \la S\sigma, \Delta \sigma \ra - H ( \| \delta S \sigma \|^2 + \| d\sigma \|^2  ) \\
 &&= (3- k_1^2) k_1\| \nabla_1 \s_1 \|^2 
 + (3 k_1 - k_2) \| \nabla_2 \s_1 \|^2
 + (3 k_2 - k_1) \| \nabla_1 \s_2 \|^2
 +  (3- k_2^2) k_2 \| \nabla_2 \s_2 \|^2 \\
 &&= \sum_{i,j=1}^2 (3 - k_i^2) k_j \| \nabla_i \s_j \|^2
 \end{eqnarray*}
 
To examine the other terms, write $\delta d \sigma = a_1 \theta_1 +a_2 \theta_2$.
Then $$\hat S \delta d \sigma = k_2 a_1 \theta_1 +k_1 a_2 \theta_2$$ since $\hat S = S^{-1}$
and $k_1 k_2 = 1$.  Hence
\begin{eqnarray*}
\frac{\varepsilon}{2} \la \delta d \sigma, \delta d  \sigma \ra 
  -\left(\frac{\varepsilon}{2}\right)^2 \la \delta d \sigma, \hat S \delta d \sigma \ra
&=& \frac{\varepsilon}{2} \int_{\bd M} \left( (a_1^2+a_2^2)
- \frac{\varepsilon}{2}(k_2 a_1^2 + k_1 a_2^2)  
\right) \, dA \\
&&= \frac{\varepsilon}{2}  \int_{\bd M} 
\left( (1 -  \frac{\varepsilon}{2} k_2)a_1^2 +
(1 -  \frac{\varepsilon}{2} k_1 )a_2^2 \right) \, dA .
\end{eqnarray*}
This will be non-negative provided $\displaystyle\frac{\varepsilon}{2} \le \frac{1}{k_2}=k_1$ and
$\displaystyle\frac{\varepsilon}{2} \le \frac{1}{k_1}=k_2$, that is,  if
$\displaystyle 0\le \frac{\varepsilon}{2} \le \min(k_1,k_2)$.

\medskip
 This gives our final conclusion:
 \begin{theorem} \label{positiveb}Let $M$ be a hyperbolic 3-manifold with tubular
 boundary and let $A=S+\frac{\varepsilon}{2}\delta d$ where $\varepsilon>0$
 is a constant.  
 If  $2 \tilde h = \curl v = *d \sigma$,  $2h = - \curl \tilde v = - * d \tilde \sigma$
and $\tilde \sigma = *A\sigma$,  then
the  Weitzenb\"ock correction term $b = \int_{\bd M} \eta_c \wedge \tilde \eta_c$ is
\begin{eqnarray*} 
\frac{1}{4}  \sum_{i,j=1}^2 (3 - k_i^2) k_j \| \nabla_i \s_j \|^2
 +  \frac{\varepsilon}{2}  \int_{\bd M} 
\left( (k_2 -  \frac{\varepsilon}{2})a_1^2 +
(k_1 -  \frac{\varepsilon}{2})a_2^2 \right) \, dA .
\end{eqnarray*}
 Hence the boundary term is non-negative
 if the principal curvatures $k_1, k_2$
 satisfy $$\frac{1}{\sqrt{3}} \le k_1 \le k_2 \le \sqrt{3} $$
 and $\varepsilon \le 2 k_1$.
 \end{theorem}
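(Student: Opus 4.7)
The plan is to substitute $A = S + \frac{\varepsilon}{2}\delta d$ into the general tubular-boundary formula (\ref{flatbdryeqn}) and reduce everything to a local computation in a parallel principal frame on $\bd M$. First I would exploit $\delta^2 = 0$ and $d^2 = 0$ to get $\delta A = \delta S$ and $\la \delta d\sigma, d\delta\sigma\ra = \la d\sigma, dd\delta\sigma\ra = 0$, so that $\la A\sigma, \lap\sigma\ra = \la S\sigma,\lap\sigma\ra + \frac{\varepsilon}{2}\|\delta d\sigma\|^2$. Combined with $\hat S S = I$ on tubular boundary (giving $A - S = \frac{\varepsilon}{2}\delta d$ and $I - \hat S A = -\frac{\varepsilon}{2}\hat S \delta d$), this decouples $b$ into a ``geometric'' piece involving only $S$, $H$, $d\sigma$ and $\delta S\sigma$, plus an ``$\varepsilon$-correction'' piece involving only $\delta d\sigma$.

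For the geometric piece I would introduce a parallel orthonormal frame $e_1,e_2$ of eigenvectors of $S$ with eigenvalues $k_1,k_2$ on the flat torus $\bd M$, with dual coframe $\t_1,\t_2$, write $\sigma = \s_1\t_1 + \s_2\t_2$, and express $*d\sigma = \nabla_1\s_2 - \nabla_2\s_1$ and $-{*}\delta S\sigma = k_1\nabla_1\s_1 + k_2\nabla_2\s_2$ via (\ref{extd2}). Squaring and summing, the cross-term collapses to $k_1 k_2\,\nabla_1\s_1\nabla_2\s_2 - \nabla_1\s_2\nabla_2\s_1$, which, because $k_1 k_2 = 1$ on tubular boundary, equals $d\s_1\wedge d\s_2(e_1,e_2)$; its integral over the closed surface $\bd M$ vanishes by Stokes' theorem. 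Integration by parts on $\la S\sigma,\lap\sigma\ra$ then yields $\sum_{i,j} k_i\|\nabla_j\s_i\|^2$, and assembling the three contributions into $4\la S\sigma,\lap\sigma\ra - H(\|\delta S\sigma\|^2 + \|d\sigma\|^2)$ produces the first summand $\sum_{i,j}(3 - k_i^2)k_j\|\nabla_i\s_j\|^2$.

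For the $\varepsilon$-correction I would write $\delta d\sigma = a_1\t_1 + a_2\t_2$; then $\hat S \delta d\sigma = k_2 a_1\t_1 + k_1 a_2\t_2$ since $\hat S = S^{-1}$, and a direct diagonal expansion of $\frac{\varepsilon}{2}\|\delta d\sigma\|^2 - (\frac{\varepsilon}{2})^2 \la \delta d\sigma, \hat S\delta d\sigma\ra$ produces the second summand. To conclude non-negativity, the first summand requires $(3 - k_i^2)k_j \ge 0$ for all $i,j$, i.e.\ $k_1,k_2 \le \sqrt 3$; combined with $k_1 k_2 = 1$ this is $1/\sqrt 3 \le k_1 \le k_2 \le \sqrt 3$. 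The second summand is non-negative iff $\varepsilon/2 \le \min(k_1,k_2) = k_1$, giving $\varepsilon \le 2k_1$.

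The main obstacle, hidden behind otherwise routine bookkeeping, is the Stokes cancellation of the mixed cross-term, which relies crucially on $k_1 k_2 = 1$; this is exactly what forces the zeroth-order part of $A$ to equal $S$, as already observed informally just before the statement, and without it no choice of $A$ could produce a non-negative quadratic form.
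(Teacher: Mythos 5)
Your proposal is correct and follows essentially the same route as the paper: substitute $A=S+\frac{\varepsilon}{2}\delta d$ into (\ref{flatbdryeqn}), use $\delta A=\delta S$ and the orthogonality $\la\delta d\sigma,d\delta\sigma\ra=0$ to decouple the geometric and $\varepsilon$-pieces, compute in a parallel principal frame, kill the cross-term via $k_1k_2=1$ and Stokes, and expand $\delta d\sigma$ diagonally for the $\varepsilon$-term. No gaps.
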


\section{Boundary Values}\label{bvalues}

In this section we will describe a boundary value problem that will allow us to
find harmonic representatives for infinitesimal deformations of hyperbolic
$3$-manifolds with tubular boundary whose boundary values are of the
form discussed in the previous section.  This will allow us to make
statements about the boundary term in the \W formula which, in turn, will
lead to local rigidity results for such $3$-manifolds.  Those results and
other applications will be discussed in Section 5.

In Section 2 we saw that finding a harmonic representative $\omega$ for an infinitesimal
deformation amounts to finding a {\it real-valued} $1$-form $\tau$ which is a solution to the
equation $(\hat \lap + 4) \tau = \zeta$.  Here, $\zeta$ is a smooth, real-valued $1$-form
which equals zero in a neighborhood of the boundary and $\hat \lap$ is the usual Laplacian
on real-valued 1-forms on $\N$.  The $1$-form $\tau$ is dual
to a vector field $u$ on $\N$ which is the real part of an $E$-valued section $s$ and
the coboundary $d_E\, s$ is added to the original $E$-valued $1$-form in order to
make it globally harmonic.  The boundary behavior of $\tau$ determines that of
$s$ and hence of $d_E\, s,$ providing information about the boundary values of $\omega$.

In order to have any control over the behavior of $\tau$ near the boundary, it is necessary
to put restrictions on the domain of the operator $(\hat \lap + 4)$.  However, the
restrictions must still allow the
above problem to be solvable.  Below, we will define boundary data that the
real-valued $1$-form $\tau$ must satisfy which make this operator elliptic,
self-adjoint with trivial kernel.  Standard theory (Chapter X in \cite{Horm})
then implies that the above
problem is uniquely solvable; when $\zeta$ is smooth, as it is in our situation,
the solution $\tau$ will be smooth.

There are many choices for such boundary conditions.  Standard examples include prescribing
that either the value or the normal derivative of $\tau$ be zero, analogous to Dirichlet
and Neumann conditions for the Laplacian on real-valued functions.  However, our
choice is motivated by the further
condition that the resulting \W correction term $b$ defined in (\ref{bdef}) be positive.
None of the more standard choices of boundary data have this property.

In order to describe our boundary conditions we first need to establish some notation.

The  above Laplacian on $1$-forms, $\hat \lap$,  equals
$$ \hat d\,\hat \delta + \hat \delta\,\hat d $$
where $\hat d$ is exterior differentiation on $\N$ and $\hat \delta$ its adjoint.
We will denote by $d$ and $\delta$ the corresponding operators on $\bd \N$.
Similarly, we use the notation $\hat *$ to denote the $3$-dimensional Hodge star operator on
forms, reserving the notation $*$ for the corresponding operator on the boundary.

It is also useful to define operators $d_S = Sd$ and $\delta_S = \delta S$ operating on
functions on the boundary and on tangential 1-forms respectively. Here $S$ is the 2nd
fundamental form or shape operator on the boundary, with normal chosen so that, in our
situation with concave boundary, $S$ is positive definite.
We also define $\Delta_S = \delta d + d_S \delta_S$, which acts on tangential 1-forms.

Recall the basic setup from Section 2:  We begin with an $E$-valued $1$-form
$\hat \omega$ which represents the cohomology class in $H^1(\N;E)$ determined
by our infinitesimal deformation.  It satisfies $d_E \hat \omega = 0$, but,
in general, $\delta_E \hat \omega \neq 0$.  To find a harmonic representative
we must find a globally defined $E$-valued section $s$ satisfying the equation
$\delta_E d_E s ~ = ~ - \delta_E \hat \omega.$
Then $\omega = \hat \omega + d_E s$ is a harmonic
representative in the same cohomology class as $\hat \omega$.

Decomposing $s$ into its real and imaginary parts, we write
$s = u + i \,\tilde u$ where $u$ and $\tilde u$ can be viewed as vector
fields on $\N$.  We can assume that $2 \tilde u = \curl u$ (by choosing $s$ to be a
canonical lift, see Section 2 of \cite{HK1}.  As discussed in Section 3
of the current paper, this is equivalent to the real part of $d_E s$ being symmetric.)
Thus, it suffices to find $u$.  
This is equivalent to solving the equation
$(\hat \lap + 4) \tau = \zeta$ where $\zeta$ is the $1$-form dual to the
real part of the $E$-valued section $- \delta_E \hat \omega$ and $\tau$
is the $1$-form dual to $u$.  The equation $2 \tilde u = \curl u$
is equivalent to the equation $2 \tilde \tau = \hat * \hat d \tau$,
where $\tilde \tau$ is the $1$-form dual to $\tilde u$.

As we have done before, we can decompose 1-forms on (a neighborhood of) the boundary into their normal
and tangential parts.  In particular we write
\begin{equation} \label{taudecomp}
\tau = h \,dr + \sigma  \qquad{\rm ~and~} \qquad 2 \tilde \tau = \hat * \hat d \tau = 2(\tilde h \,dr + \tilde \sigma),
\end{equation}
where $dr$ denotes the 1-form dual to the inward pointing unit normal
and $\sigma, \tilde \sigma$ are tangential 1-forms.

We now describe a $1$-parameter family of boundary conditions, parametrized by a
parameter $\varepsilon$.  It is assumed that $\varepsilon > 0$ and is a constant.
Using the notation established above, the boundary conditions can be expressed as:
\begin{equation} \label{bdcond1}
\hat \delta \tau - 2 \varepsilon (\delta_S \sigma -2h) = 0
\end{equation}
\begin{equation} \label{bdcond2}
2  \tilde \sigma - *(2 S \sigma + \varepsilon (\lap_S \sigma - 2 d_S h)) = 0.
\end{equation}

At the end of this section we will show that the boundary value problem of solving
$(\hat \lap +4) \tau = \zeta$ subject to these boundary conditions is
elliptic and that the operator is positive, self-adjoint.  This implies that
there will be a unique solution and that the solution will be smooth on the
entire manifold with boundary.

However, in order to provide some motivation for choosing these fairly complicated
boundary conditions, we will first assume the existence of such a solution and analyze the properties of the harmonic $E$-valued $1$-form $\omega$ that we obtain
from $\tau$.

Note that $\omega$ is $d_E$ closed and, hence, is the image under
$d_E$ of a {\em locally}
defined section of $E$ whose real part is a locally defined vector field.
The divergence of this vector field is just the trace of
the real part of $\omega$, viewed as an element of ${\rm Hom}\,(TM,TM)$.
Thus, although the vector field is only locally defined,
its divergence is a globally well-defined function.  By abuse of language
we will refer to this as the ``divergence of $\omega$".

The main step is to show that when $\tau$ satisfies the above
boundary conditions, the resulting harmonic $E$-valued $1$-form $\omega$
has divergence identically zero.
This will imply that the stronger harmonicity equations (\ref{etaeqn}) and 
(\ref{omegadecomp}) hold and, thus, that the results from Section 2 and the
computations from Section 3 all apply.  It will also show that $\tau$
in fact satisfies boundary conditions that are stronger and simpler than
(\ref{bdcond1}) and (\ref{bdcond2}).

\begin{proposition} \label{divfree} Let $\omega = \hat \omega + d_E s$ be a
harmonic $E$-valued $1$-form on a compact hyperbolic $3$-manifold with
tubular boundary, where $\hat \omega$ is in standard form near the
boundary and $s = u + i \,\tilde u$ is a global section of $E$.
Let $\tau, \tilde \tau$ be the $1$-forms dual to the vector fields
$u, \tilde u$, respectively.  If $\tau$ and $\tilde \tau$ are
decomposed as in (\ref{taudecomp}) and satisfy the boundary conditions
(\ref{bdcond1}) and (\ref{bdcond2}), then the divergence of
$\omega$ is identically zero.
\end{proposition}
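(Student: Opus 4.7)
The plan is to show that $f := \div(\omega)$, i.e.\ the trace of the real part of $\omega$ viewed as a section of ${\rm Hom}(TM,TM)$, satisfies a scalar elliptic PDE on $M$ together with boundary conditions that together force $f\equiv 0$. Since $\hat\omega$ is in standard form near $\bd M$ and standard forms are of the type (\ref{omegadecomp}), hence traceless, we have $f = \div(u) = -\hat\delta\tau$ in a collar neighborhood of $\bd M$; the boundary conditions (\ref{bdcond1}) and (\ref{bdcond2}) therefore transcribe directly into boundary data for $f$.

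First I would establish the global PDE. Because $\omega$ is harmonic, its real part $\eta$ satisfies $(D^*D + DD^*)\eta + \eta = 0$. Using the Bochner--Weitzenb\"ock identity $D^*D + DD^* = \nabla^*\nabla + R$ in constant curvature $-1$ and taking the trace of this equation, the rough Laplacian $\nabla^*\nabla$ commutes with the contraction $\tr$ and produces $\hat\lap f$, while the curvature term contributes an explicit constant multiple of $f$. This gives a scalar equation $(\hat\lap + c)f = 0$ on $M$ for an explicit positive constant $c$. A parallel route is to apply $\hat\delta$ to $(\hat\lap + 4)\tau = \zeta$ and check, using $d_E\hat\omega = 0$, that the resulting forcing term $\hat\delta\zeta$ vanishes globally, yielding $(\hat\lap + 4)(\hat\delta\tau) = 0$ throughout $M$.

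Next I would extract boundary conditions for $f$ from (\ref{bdcond1}) and (\ref{bdcond2}). Condition (\ref{bdcond1}) gives directly $f|_{\bd M} = -2\varepsilon(\delta_S\sigma - 2h)$. Condition (\ref{bdcond2}), combined with the canonical-lift identities $2\tilde h = *d\sigma$ and $2h = -*d\tilde\sigma$, provides compatibility relations that control the normal derivative. Decomposing $\hat\delta\tau$ in the collar as a combination of $\partial_n h$, $\delta\sigma$, and $Hh$, and differentiating the boundary data tangentially, allows one to compute $\partial_n f$ on $\bd M$ purely in terms of tangential data, producing a Robin-type condition $\partial_n f + {\cal A} f = 0$ for an appropriate tangential operator $\cal A$ whose sign is controlled by the tubular geometry of $\bd M$.

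The proof concludes by multiplying $(\hat\lap + c)f = 0$ by $f$ and integrating over $M$; Stokes' theorem gives $\int_M (|df|^2 + cf^2)\,dV = \pm\int_{\bd M} f\,\partial_n f\,dA$, with signs determined by the inward normal convention. The Robin condition derived in the previous step makes the boundary integral have the correct sign, and combined with $c > 0$ this forces $f \equiv 0$. The hard part is the boundary-condition step: (\ref{bdcond1})--(\ref{bdcond2}) are intricate combinations involving the shape operator $S$, the parameter $\varepsilon$, and the cross-coupling between $\sigma$, $\tilde\sigma$, $h$, $\tilde h$ through the canonical-lift relations, and verifying that they produce exactly the self-adjoint boundary datum required for the energy estimate to close is the heart of the matter.
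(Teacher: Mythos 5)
Your overall strategy is the same as the paper's: the divergence $f=\tr\omega$ satisfies the scalar equation $(\hat\delta\hat d+4)f=0$ globally (this is exactly \cite[Lemma 2.4]{HK1}, your ``trace of the harmonicity equation'' route), one pairs with $f$ and integrates by parts, and everything reduces to showing that the resulting boundary integral $\int_{\bd M}f\,\hat*\hat df$ has the favorable sign. (A caution about your ``parallel route'': $\hat\delta\tau$ agrees with $\tr\omega$ only near the boundary, since away from it $\tr\omega=\tr\hat\omega-\hat\delta\tau$ and $\hat\omega$ is not traceless there; moreover $\hat\delta\zeta=0$ is not an obvious consequence of $d_E\hat\omega=0$. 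So that route both requires justification and, even if it closed, would prove vanishing of the wrong function.)

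The genuine gap is that the decisive computation --- expressing $\partial_n f$ on $\bd M$ in terms of tangential data and verifying the sign of the boundary term --- is not carried out, and the method you sketch for it would not succeed. Decomposing $\hat\delta\tau$ in the collar and ``differentiating the boundary data tangentially'' only yields tangential derivatives of $f$ on $\bd M$; its normal derivative involves $\partial_n^2 h$ and normal derivatives of $\sigma$, which conditions (\ref{bdcond1})--(\ref{bdcond2}) do not control directly. The mechanism that makes the argument close is that $\hat\omega$ is harmonic in a collar of $\bd M$, so $(\hat\lap+4)\tau=0$ there, whence $\hat d\hat\delta\tau=-(\hat\delta\hat d+4)\tau$; the tangential part of $\hat*$ of the right-hand side is $d(2\tilde\sigma)+4h\,dA$ --- precisely the quantities the boundary conditions prescribe. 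Substituting (\ref{bdcond1}) for $f|_{\bd M}=-\hat\delta\tau$ and (\ref{bdcond2}) for $2\tilde\sigma$, the boundary integrand collapses to
$2\varepsilon\,(2h-\delta_S\sigma)\,\bigl(2+\varepsilon\,\delta d_S\bigr)(2h-\delta_S\sigma)$,
and non-negativity follows because $2+\varepsilon\,\delta d_S=2+\varepsilon\,\delta S^{1/2}S^{1/2}d$ is a positive operator ($S$ being positive definite self-adjoint on the tubular boundary). Without identifying this use of the interior equation near the boundary and performing the resulting cancellation, the energy estimate does not close; asserting that a Robin condition ``with sign controlled by the tubular geometry'' exists is exactly the claim that needs proof.
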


\begin{proof}

We denote by $\tr$ the divergence of the harmonic deformation $\omega$.
Harmonicity of $\omega$ implies (see \cite[Lemma 2.4)]{HK1}  that
$$ (\hat \delta \hat d  + 4)\tr  = 0.$$
This equation holds on all of $\N$ and, taking the $L^2$ dot product on $\N$ of $\tr$
with this equation, we conclude that $\langle (\hat \delta \hat d  + 4)\tr , \tr \rangle = 0.$ Integrating by parts gives
$$\langle \hat d \tr, \hat d \tr \rangle + 4 \langle \tr, \tr \rangle +
\int_\bdM~   \tr \wedge \hat * \hat d \tr ~=~ 0$$
where the boundary is oriented using the {\it inward} normal.
If we show that the boundary integral
$$\int_\bdM~   \tr \wedge \hat * \hat d \tr$$
is non-negative,
it will follow that $\tr= 0$
and $\hat d \tr = 0$ on all of $\N$. In particular, we will have shown
that $\omega$ is divergence-free.

Since $\hat \omega$ equals some standard harmonic $E$-valued $1$-form in a 
neighborhood of the boundary, and since standard forms are all divergence-free,
the divergence of $\omega = \hat \omega + d_E s$ just equals the divergence of 
$d_E s$ in a neighborhood of the boundary.  
By definition the latter equals the 
divergence of the vector field $u$ which is the real part of $s$;
this equals $- \hat \delta \tau$, since $\tau$ is the $1$-form dual to $u$.

Since $\tr = - \hat \delta \tau$ in a neighborhood of the boundary, we can
use the boundary condition (\ref{bdcond1}) on $\tau$
when computing the boundary integral. Also, since $\hat \omega$ is harmonic in
a neighborhood of the boundary, $\delta_E \hat \omega = 0$ in a neighborhood
of the boundary which in turn implies that $\delta_E d_E s$  is zero near the boundary.  As discussed in Section 2, this means that
$(\hat \lap + 4) \tau = 0$ near the boundary.

The second term in the integrand becomes $- \hat * \hat d \hat \delta \tau$, but,
because $\tau$ satisfies $(\hat d \hat \delta + \hat \delta \hat d + 4) \tau = 0,$
this equals $\hat *(\hat \delta \hat d + 4) \tau$.  Since the integral is over the
boundary, only the tangential part of the integrand appears.  The tangential
part of $\hat*\hat \delta \hat d \tau = \hat d \hat * \hat d \tau$ equals
$d (2 \tilde \sigma)$ where  $2 \tilde \sigma$ is the tangential
part of $\hat * \hat d \tau$ and $d$ is the exterior derivative on the boundary.
The tangential part of $\hat * \tau$ equals $h\, dA$, where $dA$ is the area form on
the boundary. Rewriting $2 d \tilde \sigma$ as $ 2 \delta (* \tilde \sigma) ~ dA$,
the boundary integral can then be written as
$$\int_\bdM~  (-\hat \delta \tau)  ( 4 h + \delta (*2  \tilde \sigma)) ~ dA.$$

Using (\ref{bdcond1}) and (\ref{bdcond2}), the boundary integral becomes
$$\int_\bdM ~  2 \varepsilon (2h -\delta_S \sigma)  ( 4 h -
\delta (2 S \sigma + \varepsilon (\lap_S \sigma - 2 d_S h)) ~ dA.$$

This simplifies to
$$ 2 \varepsilon ~ \int _\bdM ~(2h - \delta_S \sigma) ~ ((2  +  \varepsilon \delta d_S) (2h - \delta_S \sigma)) ~ dA,$$
which equals
$$2 \varepsilon ~ \langle (2h - \delta_S \sigma), (2  +  \varepsilon \delta d_S) (2h - \delta_S \sigma) \rangle_{\bdM},$$
where $\langle \cdot, \cdot \rangle_{\bdM}$
denotes the $L^2$ dot product of $1$-forms on $\bdM$.

We see that  $2 + \varepsilon \delta d_S = 2 + \varepsilon \delta S^{\frac 1 2} S^{\frac 1 2} d$
is a positive operator since $S$ is positive self-adjoint and hence has  a positive
self-adjoint square root.  Thus the boundary integral is non-negative and
we conclude that the deformation is divergence free on $\N$ as desired.
\end{proof}

The fact that $\omega$ is divergence-free provides us with further
information about the boundary behavior of the solution $\tau$.
In particular, since $\tr = -\hat \delta \tau$
near the boundary, one concludes, as in the previous proof, that
$\hat \delta \tau = 0$ near the boundary.  Together with the first
boundary condition (\ref{bdcond1}), this implies that
$$2\, h = \delta_S \sigma,$$ 
and the second boundary condition (\ref{bdcond2}) simplifies to
$$ 2  \tilde \sigma =  *(2 S \sigma + \varepsilon \delta d \sigma).$$

The computations of the general Weitzenb\"ock  correction
term in the previous section were all expressed in terms of a
general differential operator $A$ on tangential
$1$-forms.  The operator $A$ expresses the relation between the two
tangential $1$-forms $\sigma, \tilde \sigma$ and is determined
by the equation (\ref{Adef}) $\tilde \sigma = * A \sigma$.  With
our choice of boundary conditions (\ref{bdcond1}) and (\ref{bdcond2}),
we see that
$$ A \sigma = S \sigma + \frac 12 \varepsilon \delta d \sigma$$
which is precisely the value (\ref{Avalue}) for the operator $A$
that we wished to obtain.  It was with this result in mind that we were
led to our boundary conditions.

We noted in the previous section that in order for the Weitzenb\"ock  
correction term $b$ in
(\ref{bwithA}) to be positive, it is necessary for the $0$-order term of $A \sigma$ to
equal $S \sigma$ and that choosing $A \sigma = S \sigma$ did give a positive value
for $b$. Indeed, a natural choice for our boundary conditions would have been to
set $\varepsilon = 0$ in (\ref{bdcond1}) and (\ref{bdcond2});  this would have led
to the value $A \sigma = S \sigma$.  However, those conditions do not lead to
an elliptic boundary value problem and it was necessary to perturb this
natural choice to obtain an elliptic problem.  To do so in such a way
that the resulting harmonic deformation was divergence-free and so
that $b$ was still positive required some delicacy
and led to the more complicated form of the boundary conditions.

\bigskip
We are now in a position to apply the conclusions from the previous sections.

\bigskip

Let $\omega = \hat \omega + d_E s$ equal the harmonic $E$-valued
$1$-form obtained from our boundary value problem, and decompose $\omega$
into its real and imaginary parts as
$\eta + i \tilde \eta$ where $\eta, \tilde \eta$ are elements of
${\rm Hom}\,(TM,TM)$.  We have assumed that $\eta$ is symmetric and 
Theorem \ref{divfree} implies that it is traceless.
Therefore the stronger harmonicity equation (\ref{etaeqn}) holds.  As discussed in
Section 2, this implies that $\tilde \eta$ is also traceless and symmetric
and equals $*D\eta$.  This, in turn, allows to conclude that the
\W formula (\ref{bdryeq}) holds.

In a neighborhood of the boundary, we can write $\omega = \omega_0 + \omega_c$,
where $\omega_0$ is a standard form and $\omega_c = d_E s$.  Since the real
and imaginary parts of $\omega_0$ are also symmetric and traceless, the
same will be true for $\omega_c$.  Writing $\omega_c = d_E s = \eta_c + i \tilde \eta_c,$
we see that the hypotheses of Theorem \ref{harmonicweitzbdry} hold for $\eta_c, \tilde \eta_c.$
Since we have also concluded that, for any $\omega$ obtained using our boundary conditions,
the operator $A$ satisfies (\ref{Avalue}), Theorem \ref{positiveb} applies.
If we write  $\omega_0 = \eta_0 + i \tilde \eta_0$,
then, as discussed in Section 2, $\tilde \eta = *D\eta$ and $\tilde \eta_0 = *D\eta_0.$
It follows that $\tilde \eta_c = *D\eta_c$ as well. Hence, the boundary integral
$b$ in (\ref{bdef}) equals $-b_R(\eta_c, \eta_c)$ where $b_R(\eta_c, \eta_c)$ is the
contribution from the correction term $\omega_c$ to the \W formula (\ref{bdryeq}):
$$ b = \int_{\bd M} \eta_c \wedge \tilde\eta_c = \int_{\bd M} \eta_c \wedge *D\eta_c =
- b_R(\eta_c,\eta_c)$$

Thus, assuming the existence of the solution $\tau$ of our boundary value
problem (proved in Theorem \ref{existence} below), Theorem \ref{positiveb}
implies:

\begin{theorem} \label{negativebR} Let $\N$ be a hyperbolic $3$-manifold with
tubular boundary whose principal curvatures $k_1, k_2$
satisfy $\frac{1}{\sqrt{3}} \le k_1 \le k_2 \le \sqrt{3}. $
Then, for any infinitesimal deformation, there is a harmonic representative
$\omega$ so that the correction term $b_R(\eta_c,\eta_c)$ is non-positive.
\end{theorem}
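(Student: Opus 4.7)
The plan is to combine the analytic existence result for the boundary value problem of Section \ref{bvalues} with the explicit boundary-term computation of Theorem \ref{positiveb}, and thereby obtain a harmonic representative whose correction term has the desired sign.

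First, I would start from an arbitrary infinitesimal deformation, represented by a $d_E$-closed $E$-valued 1-form $\hat\omega$ that agrees with a standard form in a collar of $\bd \N$. The goal is to find a global section $s = u + i\tilde u$ (with $2\tilde u = \curl u$ as a canonical lift) so that $\omega = \hat\omega + d_E s$ is harmonic. As recalled in Section \ref{prelim}, this reduces to solving $(\hat\lap + 4)\tau = \zeta$ for the 1-form $\tau$ dual to $u$. I would invoke the existence theorem for the boundary value problem with boundary conditions (\ref{bdcond1}) and (\ref{bdcond2}), fixing a constant $\varepsilon > 0$ chosen small enough that $\varepsilon \le 2k_1$ on every boundary component (this is possible because the tube radii, hence the principal curvatures, are bounded away from the degenerate cases by the hypothesis $1/\sqrt{3} \le k_1$).

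Next, I would use Proposition \ref{divfree}, which applies precisely to solutions of this boundary value problem, to conclude that the resulting $\omega$ is divergence-free. Hence its real part $\eta$ is both symmetric (by the canonical lift condition) and traceless (by divergence-freeness), so the stronger harmonicity equation (\ref{etaeqn}) holds and $\omega$ admits the decomposition (\ref{omegadecomp}), with $\tilde\eta = \hat * D \eta$. As observed at the end of Section \ref{bvalues}, the divergence-free property further simplifies the boundary conditions to $2h = \delta_S \sigma$ and $2\tilde\sigma = *(2S\sigma + \varepsilon\,\delta d\,\sigma)$, so the operator $A$ relating $\tilde\sigma$ to $\sigma$ via $\tilde\sigma = *A\sigma$ is exactly $A = S + \tfrac{\varepsilon}{2}\delta d$, which is the value (\ref{Avalue}) required by Theorem \ref{positiveb}.

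Now in a collar of the boundary I would split $\omega = \omega_0 + \omega_c$, with $\omega_0$ standard and $\omega_c = d_E s$, and write $\omega_c = \eta_c + i\tilde\eta_c$. Since both $\omega$ and $\omega_0$ have symmetric and traceless real parts satisfying (\ref{omegadecomp}), so does $\omega_c$; in particular $\tilde\eta_c = \hat * D \eta_c$. This gives the identity
\[
b \;=\; \int_{\bd \N} \eta_c \wedge \tilde\eta_c \;=\; \int_{\bd \N} \eta_c \wedge \hat * D\eta_c \;=\; -\, b_R(\eta_c,\eta_c).
\]
Applying Theorem \ref{positiveb} to $b$, the curvature hypothesis $1/\sqrt{3} \le k_1 \le k_2 \le \sqrt 3$ makes each factor $(3 - k_i^2)k_j$ non-negative, and the choice $0 < \varepsilon \le 2k_1 = 2/k_2$ makes the remaining terms $(k_j - \varepsilon/2)$ non-negative, so $b \ge 0$. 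Hence $b_R(\eta_c,\eta_c) \le 0$, as required.

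The only real obstacle is the existence/uniqueness of the solution to the boundary value problem, which is the hardest part of the paper but is deferred to Theorem \ref{existence} that appears after the statement; given this, the argument above is almost entirely a matter of assembling the decompositions and invoking Proposition \ref{divfree} and Theorem \ref{positiveb}. A minor subtlety worth flagging is the uniform choice of $\varepsilon$: one must pick a single $\varepsilon > 0$ that works simultaneously on all boundary components, which is where the lower bound on $k_1$ (equivalently, the lower bound on the tube radius) enters essentially.
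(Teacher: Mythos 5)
Your proposal is correct and follows essentially the same route as the paper: solve the boundary value problem of Theorem \ref{existence}, use Proposition \ref{divfree} to get a divergence-free (hence symmetric, traceless) harmonic representative whose boundary data realize $A = S + \frac{\varepsilon}{2}\delta d$, identify $b = -b_R(\eta_c,\eta_c)$, and invoke Theorem \ref{positiveb}. Your remark about choosing a single $\varepsilon>0$ with $\varepsilon \le 2k_1$ on every boundary component is a fair point of care that the paper handles implicitly via the uniform lower bound on the tube radii.
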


Recall from Section 2 that, for any divergence-free harmonic $\omega$,
we have the \W formula (\ref{bdryeq}, \ref{bdrydecomp}):
\begin{eqnarray*}||D\eta||^2 ~+~ ||\eta||^2 ~=~ \b_R(\eta,\eta) = \b_R(\eta_0,\eta_0)
+ \b_R(\eta_c,\eta_c).
\end{eqnarray*}
We immediately obtain the following corollary which will be crucial in
proving the applications in the next section.  Note that the standard form
$\omega_0$ depends only on the infinitesimal variation of the holonomy
of the boundary; thus, it and its real part $\eta_0$ are invariants of
the cohomology class of the infinitesimal deformation, independent of the
choice of representative.

\begin{corollary} \label{positivestandard} Let $\N$ be a hyperbolic $3$-manifold with
tubular boundary whose principal curvatures $k_1, k_2$
satisfy $\frac{1}{\sqrt{3}} \le k_1 \le k_2 \le \sqrt{3}. $
Then, for any non-trivial infinitesimal deformation of $\N$ we obtain $b_R(\eta_0,\eta_0) > 0.$
\end{corollary}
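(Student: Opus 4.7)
The plan is to combine the Weitzenb\"ock identity with the non-positivity of the correction term supplied by Theorem \ref{negativebR}. Starting from a non-trivial infinitesimal deformation, I invoke the existence of a solution to the boundary value problem (the existence theorem stated at the end of this section) to obtain a harmonic representative $\omega = \hat\omega + d_E s$ of the given cohomology class, and in a neighborhood of the boundary write $\omega = \omega_0 + \omega_c$ with $\omega_c = d_E s$. By Proposition \ref{divfree} the deformation is divergence-free, so the real part $\eta$ is symmetric and traceless, $\tilde\eta = \hat * D\eta$, and the full Weitzenb\"ock formula (\ref{bdryeq}) together with the cross-term cancellation (\ref{bdrydecomp}) applies, giving
\begin{equation*}
\|D\eta\|^2 + \|\eta\|^2 \;=\; b_R(\eta_0, \eta_0) + b_R(\eta_c, \eta_c).
\end{equation*}

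Next I apply Theorem \ref{negativebR}, whose hypothesis $\frac{1}{\sqrt{3}} \le k_1 \le k_2 \le \sqrt{3}$ matches exactly the hypothesis of the corollary, to deduce $b_R(\eta_c, \eta_c) \le 0$ for this choice of harmonic representative. Rearranging,
\begin{equation*}
b_R(\eta_0, \eta_0) \;\ge\; \|D\eta\|^2 + \|\eta\|^2 \;\ge\; 0,
\end{equation*}
which establishes non-negativity directly. The remaining task is to upgrade this to a strict inequality.

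The step I expect to require the most care is ruling out equality, since $\eta_0$ only records the infinitesimal change in boundary holonomy and a priori might vanish while the deformation itself is non-trivial inside $\N$. Suppose for contradiction that $b_R(\eta_0,\eta_0) = 0$. Then the right side of the Weitzenb\"ock identity forces $\|\eta\| = 0$, so $\eta \equiv 0$ on $\N$, and then $\tilde\eta = \hat * D\eta \equiv 0$ as well, whence the harmonic representative satisfies $\omega = \eta + i\tilde\eta \equiv 0$. Since $\omega$ was constructed as a representative of the given cohomology class in $H^1(\N; E)$, this means $\hat\omega = -d_E s$ is a coboundary, contradicting the assumption that the infinitesimal deformation is non-trivial. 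Therefore $b_R(\eta_0,\eta_0) > 0$. The entire argument is essentially bookkeeping once Theorem \ref{negativebR} and the existence of the harmonic representative are granted; the conceptual work has been carried out in Sections \ref{weitz} and the first part of this section, and the present corollary is the clean rigidity statement that will feed into the local parametrization theorem of the next section.
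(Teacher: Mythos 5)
Your proof is correct and follows the same route as the paper: apply the Weitzenb\"ock identity with the decomposition $b_R(\eta,\eta)=b_R(\eta_0,\eta_0)+b_R(\eta_c,\eta_c)$, use Theorem \ref{negativebR} to discard the correction term, and note that $\|\eta\|^2>0$ for a non-trivial class since $\eta\equiv 0$ would force $\omega\equiv 0$ and hence $\hat\omega=-d_Es$ a coboundary. The only difference is that the paper leaves the strictness step implicit (it was already observed after (\ref{bdryeq}) that $b_R(\eta,\eta)>0$ for non-trivial deformations), whereas you spell it out.
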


Finally, we must justify our claim that we can always solve our given
boundary value problem.

For our purposes a {\it differential operator} on a manifold $\N$ with boundary
consists of a differential operator $P$ from
$C^\infty(F)$, the $C^{\infty}$ sections of a bundle $F$ over $M$ to $C^\infty(G)$, where $G$ is
another such bundle, together with
a collection $\{p_1,...,p_r\}$ of differential operators from $C^\infty(F)$ to $\oplus_i C^\infty(G_i)$ where
$\oplus_i G_i$ is a direct sum decomposition of the bundle $G$, restricted to the boundary.
Such an operator will be denoted by $(P; \{p_1,...,p_r\}).$
In our case, the bundles $F$ and $G$ are both equal to the bundle of $1$-forms on $\N$
and we decompose this bundle on the boundary as the direct sum of its normal and tangential
parts.  The main operator $P$ is $\hat \lap + 4$ and $p_1, p_2$ equal the operators
on the left-hand sides of (\ref{bdcond1}) and (\ref{bdcond2}), respectively.

The remainder of this section will be devoted to proving the following theorem.

\begin{theorem} \label{existence} Let $\N$ be a compact hyperbolic $3$-manifold with
tubular boundary.  Then, for any constant $\varepsilon > 0$, the differential operator
$(P;\{p_1,p_2\})$ on the bundle of $C^\infty$ real-valued $1$-forms on $\N$ defined, using the notation above, by
\begin{eqnarray*}
P(\tau) &=& (\hat \lap +4) (\tau)\\
p_1(\tau) &=& \hat \delta (\tau) - 2 \varepsilon (\delta_S \sigma -2h)\\
p_2(\tau) &=&  2 \tilde \sigma - *(2 S \sigma + \varepsilon (\lap_S \sigma - 2 d_S h))
\end{eqnarray*}
is elliptic.  On the subspace where $p_1(\tau) = p_2(\tau) = 0$ it is positive 
and self-adjoint.  In particular, for any smooth $1$-form
$\zeta$, there is a unique solution to $P(\tau) = \zeta, p_1(\tau)=p_2(\tau)=0$
and that solution is smooth on all of $\N$.
\end{theorem}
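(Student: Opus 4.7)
The interior operator $P = \hat\lap + 4$ acts on real-valued $1$-forms with scalar elliptic principal symbol $|\xi|^2 I$, so only the boundary part requires work. Because $p_1$ has order $1$ while $p_2$ has order $2$ (if one gives the normal and tangential components of $\tau$ appropriate Douglis-Nirenberg weights), the natural framework is Lopatinski-Shapiro ellipticity for mixed-order systems. I would freeze coefficients at a point of $\bd\N$, take a tangential Fourier transform in a cotangent vector $\xi' \ne 0$, and examine the resulting ODE on the half-line $r \in [0,\infty)$. Decaying solutions of $(|\xi'|^2 - \partial_r^2)\tau = 0$ have the form $\tau(r) = \tau_0 e^{-|\xi'|r}$ with $\tau_0$ in the three-dimensional fiber $T^*_p\N$, and substituting into the principal symbols of $p_1$ and $p_2$ yields a $3\times 3$ linear map on $\tau_0$ that must be shown invertible for every $\xi'\ne 0$. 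The leading contributions come from $\hat\delta\tau$ in $p_1$ and from $\varepsilon \lap_S\sigma$ in $p_2$; the key point is that $\varepsilon>0$ prevents a rank collapse that would occur at $\varepsilon = 0$, so the determinant of the Lopatinski matrix is nonzero.

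For self-adjointness on the subspace $\{p_1(\tau) = p_2(\tau) = 0\}$, I would compute
\[
\langle P\tau_1,\tau_2\rangle - \langle\tau_1,P\tau_2\rangle
\]
via Green's formula. Writing $\hat\lap = \hat d\hat\delta + \hat\delta\hat d$, this difference reduces to boundary pairings between $\hat\delta\tau_j$, the tangential part $2\tilde\sigma_j$ of $\hat{*}\hat d\tau_j$, the normal components $h_j$, and the tangential forms $\sigma_j$. On the flat boundary tori the shape operator $S$ is symmetric and $d,\delta$ are formal adjoints, so substituting the boundary conditions rewrites each surviving term as a symmetric bilinear form in $(\tau_1,\tau_2)$ and the antisymmetric part cancels.

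For positivity on the same subspace, take $\tau_1 = \tau_2 = \tau$ and integrate by parts to obtain
\[
\langle P\tau,\tau\rangle = \|\hat d\tau\|^2 + \|\hat\delta\tau\|^2 + 4\|\tau\|^2 + B(\tau),
\]
where $B(\tau)$ is a boundary integral. Using $p_1(\tau) = 0$ to substitute $\hat\delta\tau|_{\bd\N} = 2\varepsilon(\delta_S\sigma-2h)$ and $p_2(\tau) = 0$ to replace $2\tilde\sigma$ on $\bd\N$, the resulting boundary expression collapses---by exactly the algebraic manipulation already carried out at the end of the proof of Proposition~\ref{divfree}---to
\[
B(\tau) = 2\varepsilon\,\langle\, 2h-\delta_S\sigma,\ (2 + \varepsilon\delta d_S)(2h-\delta_S\sigma)\,\rangle_{\bd\N} \ \ge\ 0,
\]
the non-negativity following from positivity of $2 + \varepsilon\delta d_S = 2 + \varepsilon\delta S^{1/2}S^{1/2}d$ (an operator of the form $2I + \varepsilon A^*A$ with $A = S^{1/2}d$). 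The interior $4\|\tau\|^2$ term alone already forces $\langle P\tau,\tau\rangle \ge 4\|\tau\|^2$, so $P$ is strictly positive on the domain and has trivial kernel.

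With ellipticity, formal self-adjointness, and triviality of the kernel established, the general theory of elliptic boundary value problems (\cite[Ch.~X]{Horm}) produces, for every smooth $\zeta$, a unique smooth $\tau$ solving $P\tau = \zeta$ with $p_1(\tau) = p_2(\tau) = 0$, and elliptic regularity gives smoothness up to $\bd\N$. I expect the principal difficulty to be the Lopatinski step: the mixed orders require careful bookkeeping of principal symbols in the frozen-coefficient model, and this is also where the need for the perturbation $A = S + \tfrac{\varepsilon}{2}\delta d$ of the naive choice $A = S$ becomes transparent, exactly as the authors foreshadowed in Section~\ref{weitz}.
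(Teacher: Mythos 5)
Your strategy coincides with the paper's: check the Lopatinski condition after freezing coefficients at a boundary point, then use Green's identity together with the boundary conditions to get symmetry and non-negativity of the boundary term, and conclude via the standard theory in \cite[Ch.~X]{Horm}. Two specific points do not check out as written, however.

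First, in the positivity step you quote the wrong boundary term. The expression $2\varepsilon\langle 2h-\delta_S\sigma,(2+\varepsilon\delta d_S)(2h-\delta_S\sigma)\rangle$ is the boundary integral that appears in the proof of Proposition~\ref{divfree}, where the integrand is $(-\hat\delta\tau)\wedge\hat *\hat d(-\hat\delta\tau)$; it is \emph{not} the boundary term $B(\tau,\psi)=-\int_{\bd \N}\bigl(\hat *\hat d\tau\wedge\psi+\hat\delta\tau\wedge\hat *\psi\bigr)$ arising in Green's identity for $\hat\lap$. Substituting the boundary conditions into the latter gives instead
$B(\tau,\psi)=2\langle\phi,S\sigma\rangle+\varepsilon\bigl(\langle d\phi,d\sigma\rangle+\langle\delta_S\phi-2k,\ \delta_S\sigma-2h\rangle\bigr)$,
hence $B(\tau,\tau)=2\langle\sigma,S\sigma\rangle+\varepsilon\bigl(\Vert d\sigma\Vert^2+\Vert\delta_S\sigma-2h\Vert^2\bigr)\ge 0$ since $S$ is positive definite. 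Your conclusion survives, and this same formula also exhibits the symmetry in $(\tau,\psi)$ needed for self-adjointness, but the "exact algebraic manipulation from Proposition~\ref{divfree}" you invoke applies to a different integral and would not reproduce the identity you wrote.

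Second, the Lopatinski step is where the real content lies, and you leave its conclusion as an assertion. The principal symbol of $p_2$ acts on the tangential part $\sigma$ only through $\varepsilon\lap_S=\varepsilon(\delta d+S d\delta S)$; in the principal-curvature frame at a boundary point, with $\zeta=(a,b)\ne 0$, its symbol is the diagonal matrix with entries $k^2a^2+b^2$ and $a^2+k^{-2}b^2$, which is invertible. This must actually be computed: the symbol of $\delta d$ alone has rank one, so the $d_S\delta_S$ contribution is essential. Invertibility forces $\sigma(0)=0$ for any bounded solution $\tau_0e^{-|\zeta|t}$, after which the symbol of $p_1$, namely $h(0)|\zeta|-i\,\zeta\cdot\sigma(0)+2i\varepsilon\,\zeta\cdot S\sigma(0)$, forces $h(0)=0$. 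Without this computation the statement "the determinant of the Lopatinski matrix is nonzero" is precisely what has to be proved; indeed it fails at $\varepsilon=0$, where the reduced system admits the nontrivial bounded solution $i\,\sigma(0)=h(0)\,\zeta/|\zeta|$, which is exactly why the second-order perturbation was introduced.
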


\begin{proof}

We will first show that this operator is self-adjoint and positive.

Recall that $\hat\lap = \hat d \hat \delta + \hat \delta \hat d$.  Then
for any real-valued $1$-forms $\tau, \psi$ on $\N$, integration by parts gives us:
$$\langle \hat \lap \tau, \psi\rangle = \langle \hat d \tau, \hat d \psi \rangle ~+~
\langle \hat \delta \tau, \hat \delta \psi \rangle + B(\tau,\psi),$$
where $\langle \cdot, \cdot \rangle$ is the $L^2$ inner product on $\N$ and
$B(\tau,\psi)$ is a boundary term which is given by an integral over
the boundary.

The operator $\hat \lap$ (hence $\hat \lap + 4$) is self-adjoint as long as
$$\langle \hat \lap \tau, \psi\rangle ~-~ \langle\hat \lap \psi, \tau\rangle
~=~ B(\tau,\psi)\,-\, B(\psi, \tau) ~=~ 0.$$
The operator $\hat \lap + 4$ will have trivial kernel as long as
$$\langle (\hat \lap +4) \tau, \tau \rangle > 0$$
 for any non-zero $\tau$.  Letting $\tau = \psi$ above this will be guaranteed
as long as we have
$$B(\tau,\tau) \geq 0.$$

\medskip

Using Green's identity, we obtain the following formula for the boundary term, where
the boundary is oriented with respect to the {\it inward} normal:
$$ B(\tau, \psi) ~=~ -( \int_\bdM ~ \hat *\hat d\tau \wedge \psi ~+~ \hat \delta \tau \wedge\hat * \psi).$$
Again we use the notation $\hat *$ to denote the $3$-dimensional star operator on
forms, reserving the notation $*$ for the corresponding operator on the boundary.
\medskip

As before we decompose $\tau$ as $\tau = h \, dr + \sigma$ and let
$ 2 \tilde \sigma$ equal the tangential part of $\hat * \hat d \tau$.
If we decompose $\psi$ as $\psi = k \,dr + \phi$, we can write
$$B(\tau, \psi) ~=~ \int_\bdM ~ 2 \phi \wedge \tilde \sigma  ~-~ \hat \delta \tau \wedge *k.$$
Using the boundary conditions (\ref{bdcond1}) and (\ref{bdcond2}),
the boundary term becomes
\begin{eqnarray*}
B(\tau, \psi) &=& \int_\bdM ~  \phi \wedge *(2 S \sigma + \varepsilon (\lap_S \sigma - 2 d_S h)
 ~+~ 2 \varepsilon  (2h - \delta_S \sigma ) \wedge *k) \\
&=& 2 \langle \phi, S \sigma \rangle + \varepsilon  (\langle \phi, \delta d \sigma  +
d_S (\delta_S \sigma - 2 h) \rangle
+ \langle  -2k, (\delta_S \sigma - 2 h) \rangle)\\
&=& 2 \langle \phi, S \sigma \rangle + \varepsilon  (\langle d \phi,  d \sigma \rangle +
\langle (\delta_S \phi - 2k),  (\delta_S \sigma - 2 h) \rangle)
\end{eqnarray*}
where $\langle \cdot, \cdot \rangle$ denotes the $L^2$ inner product on
forms on the boundary oriented by the {\em inward} normal and we have used
the definition $\Delta_S = \delta d + d_S \delta_S$.

\medskip

It is apparent from this formula and the fact that
$S$ is symmetric and positive definite, that the boundary term is
symmetric in $\tau$ and $\psi$ and non-negative when $\psi = \tau$.
It follows that the operator $\lap + 4$ is self-adjoint and positive
definite with these boundary conditions.

\bigskip

Finally, we must show that this boundary value problem is elliptic.
To see that the boundary conditions lead to an
elliptic boundary value problem, we consider each of the operators,
$P, p_1, p_2$ where the ranges of the two latter operators are
the sub-bundles of normal and tangential parts of $1$-forms on the boundary.
We then take the top order terms of each of these operators.  It is a subtlety
of differential operators on bundles
that ellipticity may depend on the choice of decomposition of the target
bundle, since this affects what the top order terms are.

To show that the system is elliptic one considers the symbols of the operators.
This amounts to looking at the system in local coordinates, fixing the coefficients
of the operators by evaluating at a boundary point, and taking only the top
order terms in each operator.  One then considers the homogeneous, constant
coefficient problem in the upper half space of $\R^n$ given by these simplified
operators.  We refer the reader
to \cite{Horm}, Chapter X or \cite{atiyah}, Appendix I for a full description.
Below we will see how the process works in our specific case.

For $\hat \lap + 4$ taking the symbol simply gives the standard laplacian in $\R^3$
which is well-known to be elliptic.
Since $2 \tilde \sigma$ equals the tangential part of $\hat * \hat d \tau,$
it is obtained by applying a first order operator to $\tau$.  One then easily
sees that $p_1$ is
of first order and $p_2$ is of order 2.  Taking the top order terms,
the boundary operators simplify to
\begin{eqnarray*} &&\hat \delta \tau - 2 \varepsilon (\delta_S \sigma), \\
&& \varepsilon \lap_S \sigma.
 \end{eqnarray*}

These operators are still defined in terms of the hyperbolic metric.  It is easy
to check that, taking natural orthonormal coordinates at any point on
the boundary torus the coefficients of the top order terms are independent
of the point chosen.  The operators simply become the same operators
viewed in the upper half-space
$\R^3_{+} = \{(x,y,t) | t \geq 0\}$ with the standard Euclidean metric.
Multiplication by $S$ becomes multiplication in the $(x,y)$-plane by
the diagonal matrix with diagonal entries $k=k_1, k^{-1}=k_2$ which
are the principal curvatures of the boundary torus.
We are now left with the simplified system of solving
$\hat \lap \tau = 0$ in the upper half-space with homogeneous boundary
conditions determined by these simplified boundary operators.
By definition,
the original system is elliptic if and only if
this simplified  system has no non-trivial bounded solutions.
It suffices to show that there are no non-trivial bounded
solutions $\tau(x,y,t)$ of the form
$$\tau (t) e^{i(\zeta \cdot (x,y))},$$
where $\zeta = (a,b)$ is any
non-zero vector in the boundary plane.  The solutions, $\tau(t)$,  to $\hat \lap \tau = 0$
for a given choice
of $\zeta$ are linear combinations of $e^{|\zeta| \, t}$ and $e^{-|\zeta| \,t}$.  Since we
are only interested in bounded solutions, only scalar multiples of the latter function
will appear.  In particular, we have that $ \tau^{\prime}(t) = -|\zeta| \tau(t)$.

We decompose $\tau$ into its normal and tangential components which
we again denote by $h$ and $\sigma$, respectively.  Viewing $\tau$ as a $3$-dimensional
vector field and $\sigma$ as a $2$-dimensional one, $-\hat \delta \tau$ is the
divergence of $\tau$ which equals $h^{\prime}$ plus the divergence of $\sigma$.
Similarly, $-\delta_S \sigma$ is the divergence of $S \sigma$.  Note that
all calculations are done with respect to the Euclidean metric.

For solutions of this form with $\zeta$ fixed, the boundary conditions become
\begin{eqnarray*}
h(0) |\zeta| - i\, \zeta \cdot \sigma(0) + i \, 2\varepsilon \zeta \cdot S \sigma(0) &=& 0 \\
 \varepsilon L_S (\zeta) \sigma(0) &=& 0,
 \end{eqnarray*}
where $L_S(\zeta)$ is the matrix computed below.

We'll see that  $L_S(\zeta)$ is invertible, so the second boundary condition
implies that $\sigma(0) = 0,$
and thus, the first implies that $h(0) = 0$.  This means that
$\tau(0) = 0$ and, since $ \tau^{\prime}(0) = -|\zeta| \tau(0)$,  we conclude that any
solution must be trivial.

To compute the matrix in the second boundary condition, we view $\sigma$ as a
$1$-form on the boundary of the upper half-space.  Recall that $\Delta_S =
\delta d + d_S \delta_S = \delta d + S d \delta S$.  It's a standard calculation
that for $\zeta = (a,b)$, the symbols for $\delta d$ and $d \delta$ are, respectively,
multiplication by the matrices
$$ \displaystyle{
 \begin {bmatrix} b^2& -ab\\
 -ab& a^2
 \end{bmatrix}  ~~~~~~~~
 ~~~~{\rm and}
 \begin {bmatrix} a^2& ab\\
 ab& b^2
 \end{bmatrix}}   $$

The matrix $S$ is diagonal with entries $k$, $k^{-1}$ on
the diagonal.  It follows that the matrix corresponding to the operator $\lap_S$
is
$$ \displaystyle{ L_S(\zeta) ~~=~~
 \begin {bmatrix} k ^2 a^2 + b^2& 0\\
0 & a^2 + k^{-2} b^2
 \end{bmatrix}} $$
which is clearly invertible for all $(a,b) \neq 0.$
\end{proof}

\bigskip

It is worth pointing out that, when $\varepsilon = 0$ in the original boundary conditions,
the top order term in the second equation is $2 \tilde \sigma$ which is of first order and
the simplified conditions become
\begin{eqnarray*}
\hat \delta \tau &=& 0 \\
2 \tilde \sigma  &=& 0.
\end{eqnarray*}

An easy calculation shows that the first order part of $2 \tilde \sigma$ equals  $\sigma^{\prime} - dh$,
so that the corresponding linear equations used to determine ellipticity are
\begin{eqnarray*}
h(0) |\zeta| - i\, \zeta \cdot \sigma(0) &=& 0 \\
\sigma(0) |\zeta| + i\, h(0) \zeta  &=& 0.
\end{eqnarray*}

This system is seen to have a non-trivial solution given by
$ i \, \sigma(0) = h(0) \frac {\zeta} {|\zeta|}$ for any $\zeta \neq 0$,
and thus the system is not elliptic.  This is the reason we needed to perturb the
system by adding a small second order term in the second boundary condition.
The term added to the first boundary condition was necessary to keep the system
self adjoint.  The precise form of these added terms was determined by other conditions
necessary to conclude that the system was divergence free and led to a positive
value for the \W boundary correction $b$.

\section{Applications to hyperbolic Dehn Surgery Space}\label{applic}

We now apply our harmonic deformation theory to study generalized hyperbolic Dehn surgery as introduced by Thurston in \cite{thnotes}.  

\subsection{Geometry of tubular boundaries}\label{geom_tub}
\hfill\\
Let $M$ be a compact orientable hyperbolic 3-manifold $M$ with tubular boundary,
and let $T=T_R$ be a torus in $\bd M$ of tube radius $R <\infty$. Then $T$ has
principal curvatures
$k_1=\coth R$ and $k_2 =\tanh R$, and the intrinsic Euclidean metric on $T$
has the form $\E^2/ \Gamma$ where $\Gamma \cong \Z^2$ acts as translations of $\E^2$, and the holonomy of the Euclidean structure gives an isomorphism $h : \pi_1(T) \to \Gamma$.

The developing map for $M$ restricts to 
an isometric immersion 
$\Phi : \tilde T = \E^2 \to \H^3$ taking the universal cover $\tilde T$ of $T$ to the surface of a cylinder of radius $R$ in $\H^3$; $\Phi$ is uniquely defined up to composition with isometries of $\H^3$. 

Explicitly, if we 
choose standard Cartesian coordinates on $\E^2$ with $x_1,x_2$ coordinate axes in the directions of the principal curvatures $k_1, k_2$ respectively, we can take $\Phi(x_1,x_2)$ to be the point with hyperbolic cylindrical coordinates 
$\displaystyle \theta = {x_1 \over \sinh R}$ and
$\displaystyle \zeta = {x_2 \over \cosh R}$. 
Then for each $\gamma \in \pi_1(T)$, 
the {\em complex length} of $\gamma$ is given by
\begin{eqnarray} \cl (\gamma) = {x_2 \over \cosh R} +  i {x_1 \over \sinh R},
\label{C_len_def}
\end{eqnarray}
where the Euclidean holonomy 
$h(\gamma)$ is a translation with components $(x_1,x_2)$.
Once we choose an orientation on $T$ the complex length is uniquely defined up to sign; changing sign corresponds
to composing $\Phi$ with a 180 degree rotation of $\H^3$ taking the cylinder to itself.
In the limiting case as 
$R\to\infty$, we obtain a horospherical torus;  then we define 
$\cl (\gamma) = 0$ for all $\gamma \in \pi_1(T)$.

The {\em Euclidean length} $L = L(\gamma)$ of a closed curve $\gamma$ on the Euclidean torus $T$ is the length of a geodesic in its homotopy class. This is the length of the translation $h(\gamma)= (x_1,x_2)$, so we have
$L^2 = x_1^2 + x_2^2$, and this can be written terms of the complex length as follows:
\begin{eqnarray}L^2 = (\cosh R \, \Re \cl)^2 + (\sinh R \, \Im \cl)^2.
\label{L_vs_cL}
\end{eqnarray}

Next we extend some geometric notions from simple closed curves on $T=T_R$
to arbitrary homology classes in $H_1(T;\R)$.

The {\em complex length} of closed curves on $T$ gives a $\Z$-linear function
$\cl : H_1(T;\Z)\cong \pi_1(T)  \to \C$. We define the complex length $\cl$ for each 
element of $H_1(T;\R)$ by extending this to an $\R$-linear 
function $\cl : H_1(T;\R) \to \C$. 

We can regard the {\em generalized Dehn surgery coefficient} on $T$ as the 
homology class  $c=\cl^{-1}(2\pi i)$ in $H_1(T;\R)$ whenever $\cl$ is invertible.
(Note that (\ref{C_len_def}) implies that $\cl$ is invertible whenever $T$ has tube radius $R<\infty$.) 
After choosing a basis $a,b$ for $H_1(T;\Z) \cong \Z^2$ this corresponds to the
element $(x,y)$ in $\R^2$ such that $x\cl(a)+y\cl(b) = 2 \pi i$, giving the generalized Dehn surgery coefficient as defined in \cite{thnotes}.  

If $\bd M$ consists of $k$ tori $T_1$, \ldots $T_k$, we have a complex length
function $\cl_j : H_1(T_j;\R) \to \C$ for each $j$; the direct sum of these gives
a function $\cl : H_1(\bd M;\R) = \oplus_j   H_1(T_j;\R) \to \C^k$.
We then define the generalized Dehn surgery coefficient
to be the homology class $c=\cl^{-1}(2\pi i, \ldots, 2 \pi i) \in H_1(\bd M;\R) = \oplus_j   H_1(T_j;\R)$ whenever $\cl$ is invertible.

The {\em Euclidean length} $L$ of closed curves on $T$ gives
a quadratic form $L^2$ on $H_1(T;\Z)$, and this extends naturally to a positive definite quadratic form on $H_1(T;\R)$.
If we choose
a basis $a,b$ for $H_1(T;\Z)$ and let $c= p a + q b$ where $p,q \in \Z$,
then we can write $L(p a+q b)^2 =A p^2 +2B p q + C q^2$ where $A,B,C$ are constants. We use the same formula
to {\em define} the length $L(pa+qb)$ whenever $p,q \in \R$. Then the
relationship (\ref{L_vs_cL}) between Euclidean length and complex length continues to hold for all homology classes in $H_1(T;\R)$.

Next we discuss some other geometric quantities that will be important
in our arguments.

The {\em area} of the torus $T=T_R$ can also be expressed in terms of complex lengths.
If $a,b$ is a basis for $H_1(T;\Z)$, then the area of $T$ is just the area of the parallelogram with sides given by the Euclidean translations $h(a), h(b)$.
If  $ l_a +  i \theta_a$ and $l_b +  i \theta_b$
denote the complex lengths of $a$ and $b$, then
$$h(a) = (\theta_a \sinh R,  l_a \cosh R) {\rm ~and~}  h(b) = (\theta_b \sinh R,  l_b \cosh R)$$
so 
$$\area(T_R) = \begin{vmatrix}\theta_a \sinh R  &  l_a \cosh R\\
\theta_b \sinh R  &  l_b \cosh R\end{vmatrix}
= \sinh R \cosh R \, (l_b \theta_a  - l_a \theta_b),$$
provided $a,b$ are oriented so the above determinant is positive.
(Note that this is the same for any basis
$a,b$ which is oriented compatibly with $T$.)

We now define the {\em visual area} of the torus $T_R$ to be
\begin{eqnarray}\ca  = {\area(T_R) \over \sinh R \cosh R}.
\label{calA_def}
\end{eqnarray} 
This represents the measure of the set of geodesics meeting $T_R$ orthogonally as viewed from the core geodesic  of the corresponding cylinder of radius $R$,  and will play an important role in our analysis.

Note that the visual area $\ca$ is the {\em same} for parallel tori, i.e. the right hand side
of (\ref{calA_def}) is independent of $R$.  For a hyperbolic cone manifold with core geodesic of length $\ell$  and cone angle $\alpha$ we have $\ca = \alpha \ell$. 
In general, $\ca$ can be expressed in terms of the complex lengths on $T$. Using the notation above we have
\begin{eqnarray}\ca  = l_b \theta_a  - l_a \theta_b
\label{visA_hol}
\end{eqnarray}
 for any positively oriented basis $a,b$ for $H_1(T;\Z)$.
 
Next, we define the {\em normalized length} of a homology class $c \in H_1(T;\R)$
on $T=T_R$ to be
\begin{eqnarray} {\hat L}(c) = { L(c) \over \sqrt{\area(T_R)}}.
\label{Lhat_def}
\end{eqnarray}
This is just the Euclidean length of $c$ after the torus $T_R$ is rescaled to have area $1$. In the case where $R=\infty$, $T$ is a horospherical torus and $\hat L$
is the same for all parallel tori, i.e. independent of the choice of horospherical cusp cross section.

Finally, let $M$ be a hyperbolic 3-manifold with tubular boundary, and let $\hat M$ denote its canonical filling. 
For any sufficiently small $r>0$, we can truncate all the ends of $\hat M$ to give a
hyperbolic manifold with tubular boundary consisting of disjoint, embedded tubular tori of tube radius $r$.
Then we define the {\em tube radius} $\hat R$ of $\hat M$ to be the supremum of all such $r$. 

Note that if $M$ has boundary components with tube radii $R_1, \ldots, R_k$
then $\hat R$ is larger than $R = \min(R_1, \ldots, R_k)$. Further, 
for any $r<\hat R$ we can truncate $\hat M$ to obtain a manifold $M$ with tubular
boundary such that all boundary components have tube radius $r$.

\bigskip

\subsection{Infinitesimal Rigidity keeping Dehn surgery coefficients fixed}
\hfill\\
Now assume we have a hyperbolic 3-manifold $M$ with tubular boundary such that
each boundary component has tube radius at least 
$R_0 = \arctanh(1/\sqrt{3}) \approx 0.65848.$ Then we can use our harmonic deformation
theory to prove an infinitesimal rigidity theorem for nearby hyperbolic structures:

\begin{theorem} Let $M$ be a compact, orientable hyperbolic 3-manifold with tubular boundary 
such that each boundary component has tube radius at least 
$R_0 = \arctanh(1/\sqrt{3}) \approx 0.65848.$ Then there are no 
infinitesimal deformations of this hyperbolic structure keeping the Dehn Surgery coefficients constant.
\label{inf_rig_rel_DS}
\end{theorem}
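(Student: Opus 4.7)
The plan is to construct a harmonic representative of an arbitrary infinitesimal deformation, and then use the Weitzenb\"ock formula together with Theorem \ref{negativebR} to force it to vanish. First I would take any class in $H^1(\N;E)$ and choose a representative $\hat\omega$ that equals a standard harmonic form $\omega_0$ in a collar neighborhood of each boundary torus; as recalled in Section~\ref{prelim}, the standard forms are parametrized by the infinitesimal variation of the holonomy representation of the boundary, and the hypothesis that the Dehn surgery coefficient $c\in H_1(\bd\N;\R)$ is fixed is equivalent to the condition $\dot{\cl}(c)=0$ on the corresponding variation of complex length for each boundary component.

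Next I would apply Theorem \ref{existence} with $\zeta$ the $1$-form dual to the real part of $-\delta_E\hat\omega$ (which is smooth on $\N$ and vanishes near $\bd\N$ since $\hat\omega$ is harmonic there) to obtain the unique smooth solution $\tau$ of $(\hat\lap+4)\tau=\zeta$ satisfying the boundary conditions (\ref{bdcond1}) and (\ref{bdcond2}). Letting $u$ be the vector field dual to $\tau$ and forming the canonical lift $s=u+i\tilde u$ with $2\tilde u=\curl u$, the form $\omega=\hat\omega+d_E s$ is a harmonic representative of the original cohomology class. By Proposition \ref{divfree}, $\omega$ is divergence-free, so its real part $\eta$ is symmetric and traceless, equation (\ref{etaeqn}) holds, the decomposition $\omega=\eta+i\hat*D\eta$ is valid, and the Weitzenb\"ock identity (\ref{bdryeq}) decomposes as in (\ref{bdrydecomp}):
\[
\|D\eta\|^2+\|\eta\|^2 \;=\; b_R(\eta_0,\eta_0)+b_R(\eta_c,\eta_c).
\]
The tube radius hypothesis $R\ge R_0=\arctanh(1/\sqrt{3})$ is precisely the condition $1/\sqrt{3}\le k_1\le k_2\le\sqrt{3}$ required in Theorem \ref{negativebR}, so the correction term satisfies $b_R(\eta_c,\eta_c)\le 0$.

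The main step remaining, and where I expect the technical work to lie, is to show that the standard-form contribution $b_R(\eta_0,\eta_0)$ is also non-positive whenever $\dot{\cl}(c)=0$. For this I would compute $b_R(\eta_0,\eta_0)$ directly from the explicit description of the standard harmonic forms on a tubular torus of radius $R$, following the template developed in \cite{HK1} and \cite{HK2}: in any basis $\mu,\lambda$ for $H_1(T;\Z)$, the boundary integral reduces to an explicit quadratic expression in the complex-length variations $\dot{\cl}(\mu),\dot{\cl}(\lambda)$ with coefficients depending on $R$. The constraint $\dot{\cl}(c)=0$ is a single real-linear relation among these variations, and the content of the computation is that on the codimension-one subspace it cuts out, this quadratic form is non-positive. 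This sign verification is the principal obstacle beyond the analytic theory established in Sections \ref{weitz} and \ref{bvalues}.

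Once both $b_R(\eta_0,\eta_0)$ and $b_R(\eta_c,\eta_c)$ are non-positive, the displayed Weitzenb\"ock identity forces $\|D\eta\|^2+\|\eta\|^2\le 0$, hence $\eta\equiv 0$ and therefore $\omega\equiv 0$. Then $\hat\omega=-d_E s$ is a coboundary, so $[\hat\omega]=0$ in $H^1(\N;E)$ and the infinitesimal deformation is trivial, proving Theorem \ref{inf_rig_rel_DS}.
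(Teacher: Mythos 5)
Your outline reproduces the paper's proof strategy step for step: harmonic representative via Theorem \ref{existence} and Proposition \ref{divfree}, non-positivity of the correction term $\b_R(\eta_c,\eta_c)$ from Theorem \ref{negativebR} under the tube-radius hypothesis (the paper packages these two ingredients as Corollary \ref{positivestandard}, which asserts $\b_R(\eta_0,\eta_0)>0$ for any non-trivial deformation), followed by the reduction to showing $\b_R(\eta_0,\eta_0)\le 0$ when the surgery coefficients are fixed. The one step you leave as an unverified claim is, however, exactly where the content of the theorem lies, and your description of it as checking non-positivity of a quadratic form ``on the codimension-one subspace'' cut out by ``a single real-linear relation'' misses the point that makes the check succeed: the full quadratic form $\b_R(\eta_0,\eta_0)$ on the three-parameter space of standard forms $\omega_0=s\,\omega_m+(x+iy)\,\omega_l$ is \emph{indefinite} (its $s^2$-coefficient is positive, cf.\ the constant denoted $c$ in (\ref{defn_abc})), so restriction to a generic codimension-one subspace would \emph{not} be non-positive --- for instance on $\{x=0\}$ the form still takes positive values. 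What saves the argument is that the variation formula (\ref{var_cL}) gives $\dot\cl(c)=-2s\,\cl(c)+2(x+iy)\Re\cl(c)$, and the Dehn surgery coefficient satisfies $\cl(c)=2\pi i$, which is \emph{purely imaginary}; hence the $(x+iy)$ term drops out identically, $\dot\cl(c)=-4\pi i\,s$, and fixing the coefficient forces $s=0$ outright, killing the entire $\omega_m$ component. On the slice $\{s=0\}$ the explicit computation from \cite[p.~382]{HK2} gives $\b_R(\eta_0,\eta_0)=-(x^2+y^2)\tanh R\,\bigl(2+\sech^2 R\bigr)\area(T)\le 0$ on each boundary torus, and summing over components and comparing with Corollary \ref{positivestandard} yields the contradiction. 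With that observation and computation supplied, your argument is the paper's.
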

\begin{proof}
Each infinitesimal deformation of the holonomy of a boundary torus $T$
is represented, in a neighborhood of $T$, by a standard form $\omega_0$ with real part $\eta_0$. 
We can write $\omega_0$ as a linear combination  
\begin{eqnarray}
\omega_0 = {s \omega_m} + (x+iy){\omega_l} \qquad s,x,y \in \R
\label{std_form_decomp}
\end{eqnarray}
of the forms $\omega_m$ and $\omega_l$ given in (2) and (3) of \cite{HK2}.
(The vector fields $e_2, e_3$ in \cite{HK2} are chosen in the directions of the principal curvatures on the tubular boundary.) 
Then the effect of $\omega_0$ on complex length $\cl$ of any closed peripheral curve
is given in \cite[Lemma 2.1]{HK2} by:
\begin{eqnarray} {d \over dt} (\cl) = -2s \cl + 2(x+iy)\Re(\cl)
\label{var_cL}
\end{eqnarray}
and this formula extends by linearity to give the variation in the complex length of any element of $H_1(T;\R)$.

If the Dehn surgery coefficient $c \in H_1(T;\R)$ is {\em fixed}, then the complex length of $c$
is $\cl  = 2\pi i$. Hence
$${d \over dt} (\cl) = -2s \cl = 0$$ and $s=0$, so the $\omega_m$ term vanishes.
The contribution to the boundary term $b_R(\eta_0,\eta_0)$ defined in (\ref{b_defn}) from the boundary torus $T$ was computed explicitly in \cite[p382]{HK2}. Here this simplifies to
$$ -(x^2 + y^2) 
 {\sinh R \over \cosh R}
 \biggl(  2 + {1\over \cosh^2(R)}\biggr) \area(T) \le 0,$$
 where $R$ is the tube radius of $T$.
 
 If $M$ has multiple boundary components, then for any  infinitesimal deformation keeping the Dehn Surgery coefficients constant,
 $b_R(\eta_0,\eta_0)$ is a sum of terms of this form so $b_R(\eta_0,\eta_0) \le 0$. But for any non-trivial infinitesimal deformation,
$b_R(\eta_0,\eta_0) > 0$ by Corollary \ref{positivestandard}, and we conclude that the
infinitesimal deformation is trivial. 
\end{proof}

\subsection{Local parametrization by Dehn surgery coefficients}
\hfill\\
Let $M$ be a compact, orientable hyperbolic 3-manifold 
with tubular boundary, and suppose that the tube radius of each boundary component 
is {\em finite} and at least $R_0 = \arctanh(1/\sqrt{3}) \approx 0.65848.$ 
Let ${\cal R}$ denote the {\em character variety} of representations $\pi_1(M)\to PSL_2(\C)$ up to conjugacy
(see \cite{CS}, \cite{BZ}). First we describe the local structure of the algebraic variety ${\cal R}$ near the holonomy representation $\rho_0: \pi_1(M) \to PSL_2(\C)$ of $M$.
(Throughout this section we abuse notation, by using the same symbol for a representation and its image in the character variety.) 

\begin{theorem} Let $\rho_0: \pi_1(M) \to PSL_2(\C)$ be the holonomy representation for a compact, orientable hyperbolic 3-manifold $M$ with tubular boundary 
such that the tube radius of each boundary component 
is finite and at least $R_0 = \arctanh(1/\sqrt{3})$.
Then the character variety $\cal R$ is a smooth manifold near $\rho_0$, of complex dimension equal to the number of boundary components of $\bd M$.  Further, there is a smooth local parametrization of ${\cal R}$ near $\rho_0$ by the complex lengths of the Dehn surgery coefficient $c_0 \in H_1(\bd M; \R)$ corresponding to $\rho_0$.
\label{rep_space_smooth}
\end{theorem}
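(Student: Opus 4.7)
My plan is to combine the infinitesimal rigidity of Theorem~\ref{inf_rig_rel_DS} with a routine cohomological dimension count and a classical lower bound on the dimension of the character variety, then conclude via the holomorphic inverse function theorem. First, I identify the Zariski tangent space $T_{\rho_0}{\cal R}$ with $H^1(\pi_1(M); \mathrm{Ad}\,\rho_0) \cong H^1(M;E)$. The variation formula (\ref{var_cL}), applied to the standard form decomposition (\ref{std_form_decomp}), gives for each boundary torus $T_j$ a $\C$-linear functional on $H^1(M;E)$ recording the first-order variation of $\cl_j(c_j^0)$. Assembling these functionals over $j=1,\ldots,k$ produces the differential
$$d\Phi\colon H^1(M;E) \longrightarrow \C^k$$
at $\rho_0$ of the holomorphic map $\Phi\colon {\cal R} \to \C^k$, $\rho \mapsto (\cl_{\rho,j}(c_j^0))_{j=1}^k$. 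A class in $\ker d\Phi$ is precisely an infinitesimal deformation keeping the Dehn surgery coefficients constant, so by Theorem~\ref{inf_rig_rel_DS} (whose tube-radius hypothesis $R_j \ge R_0$ is in force) the kernel is trivial; hence $d\Phi$ is injective and $\dim_{\C} H^1(M;E) \le k$.

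The reverse inequality is a standard cohomological computation. Irreducibility of $\rho_0$ gives $H^0(M;E) = 0$, and Poincar\'e--Lefschetz duality (with $E$ self-dual via the Killing form) then forces $H^3(M;E) = 0$. Each boundary torus carries loxodromic holonomy (its finite tube radius forces a well-defined axis), contributing a $\C$-line of $\pi_1(T_j)$-invariants in $E$, so $\dim H^0(\bd M;E) = k$. The initial segment of the long exact sequence of $(M,\bd M)$,
$$0 \to H^0(\bd M;E) \to H^1(M,\bd M;E) \to H^1(M;E),$$
yields $\dim H^1(M,\bd M;E) \ge k$; duality converts this to $\dim H^2(M;E) \ge k$; and $\chi(M;E) = 3\chi(M) = 0$ together with the vanishing of $H^0$ and $H^3$ gives $\dim H^1(M;E) = \dim H^2(M;E) \ge k$. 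Combined with the upper bound, $\dim_{\C} H^1(M;E) = k$ and $d\Phi$ is a linear isomorphism.

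The final step is to promote this to a local biholomorphism. Having $d\Phi$ be a linear isomorphism on Zariski tangent spaces does not by itself imply that $\Phi$ is a local biholomorphism; one also needs ${\cal R}$ to be smooth of complex dimension $k$ at $\rho_0$. For this I would invoke the classical lower bound $\dim_{\rho_0}{\cal R} \ge k$ for character varieties of compact orientable 3-manifolds with $k$ torus boundary components at irreducible representations (Thurston; see \cite{thnotes} or \cite{CS}). Combined with $\dim T_{\rho_0}{\cal R} = k$, this forces ${\cal R}$ to be smooth of complex dimension $k$ at $\rho_0$; the holomorphic inverse function theorem then identifies a neighbourhood of $\rho_0$ in ${\cal R}$, via $\Phi$, with a neighbourhood of $(2\pi i,\ldots,2\pi i)$ in $\C^k$, giving the asserted parametrization. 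The genuinely hard step---that no first-order deformation preserves the Dehn surgery coefficients---is already packaged in Theorem~\ref{inf_rig_rel_DS}; the only real care needed in executing this plan is tracking the identification between Zariski tangent space, group cohomology, and de Rham cohomology to keep the sign conventions in the variation formulae consistent.
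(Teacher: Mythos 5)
Your proposal is correct and follows essentially the same route as the paper: infinitesimal rigidity (Theorem~\ref{inf_rig_rel_DS}) makes the derivative of the complex-length map injective, Thurston's lower bound $\dim_{\rho_0}{\cal R}\ge k$ supplies the matching lower bound and forces smoothness, and the inverse function theorem finishes. The only difference is your Poincar\'e--Lefschetz/Euler-characteristic computation of $\dim_{\C}H^1(M;E)\ge k$, which is a correct but redundant detour, since the Thurston dimension bound you must invoke anyway for smoothness already yields $\dim_{\C}T_{\rho_0}{\cal R}\ge k$ directly (and it spares you having to justify the irreducibility of $\rho_0$ and the identification of $T_{\rho_0}{\cal R}$ with $H^1$).
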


\begin{proof}
 For each representation $\rho$ near $\rho_0$, the complex length 
of peripheral curves extends to a well-defined $\R$-linear map 
$$\cl_\rho : H_1(\bd M; \R) \to \C^k,$$
where $k$ is the number of tori in $\bd M$.
 Define $F : {\cal R} \to \C^k$ by taking the complex lengths of the (initial) Dehn surgery coefficient  $c_0 \in H_1(\bd M;\R)$ for $M$:
 $$ F(\rho) = \cl_\rho(c_0).$$
As in \cite[section 4]{HK1}, the infinitesimal rigidity theorem (Theorem \ref{inf_rig_rel_DS}) implies that the derivative
 $d F_{\rho_0} : T_{\rho_0} {\cal R} \to \C^k$ has trivial kernel. Hence the Zariski tangent space 
 $T_{\rho_0} {\cal R}$ has complex dimension
 $\dim_\C  T_{\rho_0} {\cal R} \le k$.  However, by \cite[Theorem 5.6]{thnotes}, we know that
 ${\cal R}$ has complex dimension $ \ge k$ at ${\rho_0}$, so
 $\dim_\C  T_{\rho_0} {\cal R} \ge k$. 
 Hence $\dim_\C  T_{\rho_0}{\cal R} = k$ and
 ${\cal R}$ is a smooth manifold near ${\rho_0}$ of complex dimension $k$.
Further, the inverse function theorem then implies that $F$ is a local diffeomorphism.
\end{proof}
 
 Next, we show that the Dehn surgery coefficients give a smooth local parametrization near $\rho_0$.

\begin{theorem} Let $\rho_0: \pi_1(M) \to PSL_2(\C)$ be the holonomy representation for a compact hyperbolic 3-manifold $M$ with tubular boundary 
such that the tube radius of each boundary component 
is finite and at least $R_0 = \arctanh(1/\sqrt{3})$.
Then there is an open neighborhood $U$ of $\rho_0$ in $\cal R$ such that for each $\rho \in U$ there is a well defined Dehn surgery
coefficient $c(\rho)\in H_1(\bd M;\R)$, and the map $c: U \to H_1(\bd M; \R)$ is a diffeomorphism onto its image.
\label{DS_param}
\end{theorem}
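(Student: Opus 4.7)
The plan is to derive Theorem \ref{DS_param} as an application of the implicit function theorem, using Theorem \ref{rep_space_smooth} as the key input. The natural object to work with is the joint evaluation map
$$G : {\cal R} \times H_1(\bd M; \R) \to \C^k, \qquad G(\rho, c) = \cl_\rho(c),$$
which is smooth in both variables because $\cl_\rho$ depends smoothly on $\rho$ and is $\R$-linear in $c$. At the base point we have $G(\rho_0, c_0) = (2\pi i, \ldots, 2\pi i)$, and the partial derivative in the $c$ direction at $(\rho_0, c_0)$ is simply $\cl_{\rho_0} : H_1(\bd M; \R) \to \C^k$. This is an $\R$-linear isomorphism of $2k$-dimensional real vector spaces because each boundary component has finite tube radius, so by (\ref{C_len_def}) the restriction of $\cl_{\rho_0}$ to each $H_1(T_j; \R)$ is already invertible.

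With this setup, the implicit function theorem produces a neighborhood $U$ of $\rho_0$ in $\cal R$ and a smooth map $c : U \to H_1(\bd M; \R)$ with $c(\rho_0) = c_0$ and $\cl_\rho(c(\rho)) = (2\pi i, \ldots, 2\pi i)$ for every $\rho \in U$. Shrinking $U$ if necessary, we may assume $\cl_\rho$ remains an $\R$-linear isomorphism for all $\rho \in U$ (by continuity of $\rho \mapsto \cl_\rho$ and openness of the invertible linear maps), so $c(\rho)$ is the unique Dehn surgery coefficient of $\rho$ in the sense of Section \ref{geom_tub}.

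It remains to show that $c$ is a local diffeomorphism. Differentiating the defining identity $\cl_\rho(c(\rho)) = (2\pi i,\ldots, 2 \pi i)$ at $\rho = \rho_0$ gives
$$(d F)_{\rho_0} + \cl_{\rho_0} \circ (dc)_{\rho_0} = 0,$$
where $F(\rho) = \cl_\rho(c_0)$ is the map from Theorem \ref{rep_space_smooth}. Hence $(dc)_{\rho_0} = - \cl_{\rho_0}^{-1} \circ (dF)_{\rho_0}$. By Theorem \ref{rep_space_smooth}, $(dF)_{\rho_0} : T_{\rho_0} {\cal R} \to \C^k$ is an isomorphism, and $\cl_{\rho_0}$ is an isomorphism by the argument above, so $(dc)_{\rho_0}$ is an $\R$-linear isomorphism between real vector spaces of dimension $2k$. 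The inverse function theorem then yields the desired diffeomorphism onto its image, after further shrinking $U$ if necessary. There is no real obstacle here, since all the analytic content has already been absorbed into Theorem \ref{rep_space_smooth}; the only points requiring care are keeping track of real versus complex dimensions and verifying that the continuity of $\rho \mapsto \cl_\rho$ preserves invertibility in a neighborhood.
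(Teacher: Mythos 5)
Your proposal is correct and follows essentially the same route as the paper: both apply the implicit function theorem to the equation $\cl_\rho(c) = (2\pi i, \ldots, 2\pi i)$, using the invertibility of $\cl_{\rho_0}$ (from finite tube radius) and of $dF_{\rho_0}$ (from Theorem \ref{rep_space_smooth}) to conclude that $(dc)_{\rho_0} = -\cl_{\rho_0}^{-1}\circ dF_{\rho_0}$ is an isomorphism, hence that $c$ is a local diffeomorphism. Your explicit remark about shrinking $U$ so that $\cl_\rho$ stays invertible (making the Dehn surgery coefficient well defined throughout $U$) is a small point of care that the paper leaves implicit.
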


\begin{proof} For each representation $\rho$ in a neighborhood $V$ of $\rho_0$
in ${\cal R}$, 
complex lengths define an $\R$-linear map 
$$\cl_\rho : H_1(\bd M;\R) \to \C^k$$
where $k$ is the number of tori in $\bd M$. We can regard this as a function of
two variables:
$$\cl : V \times H_1(\bd M;\R) \to \C^k, \quad {\rm~where~} \cl(\rho,c)=\cl_\rho(c).$$

For each $\rho$ near $\rho_0$, the corresponding Dehn surgery coefficient $c$ 
 is defined by the equation
\begin{eqnarray} \cl(\rho,c)=(2\pi i, \ldots, 2\pi i). \label{newDSeqn}
\end{eqnarray}
For $\rho=\rho_0$ this has a unique solution $c_0$. 
Differentiating the equation (\ref{newDSeqn}) at $(\rho_0,c_0)$
gives the linearized equation satisfied by tangent vectors $(\dot \rho, \dot c)$ to the solution space of (\ref{newDSeqn}):
$${\partial \cl \over \partial \rho}\biggr |_{(\rho_0,c_0)} \dot\rho + {\partial \cl \over \partial c}\biggr |_{(\rho_0,c_0)} \dot c = 0.$$
Since $\cl(\rho,c)$ is a linear function of $c$ this gives:
$$\cl_{\rho_0}(\dot c) = -{\partial \cl \over \partial \rho}\biggr  |_{(\rho_0,c_0)} \dot \rho.$$
Now the right hand side is $-dF_{\rho_0} (\dot \rho)$
where $F : {\cal R} \to \C^k$ is defined by $F(\rho) = \cl_\rho(c_0)$
as in the proof of Theorem \ref{rep_space_smooth}. So this can be written
$$ \cl_{\rho_0}(\dot c) = -dF_{\rho_0} (\dot \rho).$$
Since $\cl_{\rho_0}$ is invertible and $dF_{\rho_0}$ is invertible by Theorem \ref{rep_space_smooth}, 
this has a unique solution $\dot c$ for any $\dot \rho$ and the map
$\dot \rho \mapsto \dot c$ is invertible. 
Hence, by the implicit function theorem, (\ref{newDSeqn}) has a unique solution 
$c=c(\rho)$ for any $\rho$ near $\rho_0$ and the map 
$\rho \mapsto c(\rho)$ is local diffeomorphism. Thus the Dehn surgery coordinates give a smooth local parametrization of ${\cal R}$ near $\rho_0$.
\end{proof}

This completes the proof of  Theorem \ref{bdrylocalparam}.

\medskip
\noindent{\bf Remark:}  A hyperbolic structure with {\em infinite} tube radius $R_j =\infty$ along some components $T_j$ of $\bd M$
can be filled in to give a hyperbolic structure with complete cusps corresponding to  these components. At such structures, the corresponding complex lengths $\cl_j$ are zero
and the corresponding Dehn surgery coefficient is defined to be $c_j = \infty$. In this case,  a sign for $\cl_j$  cannot be chosen to vary continuously for nearby structures.  
However, the results of Theorem \ref{rep_space_smooth}
and Theorem \ref{DS_param} extend to this situation provided
the complex length is regarded as a function
$\cl :  \oplus_j   H_1(T_j;\R) \to (\C/\pm 1)^k$, and the Dehn surgery coefficient
as an element of $\oplus_j \hat H_1(T_j,\R)$ where 
$\hat H_1(T_j,\R) = (H_1(T_j,\R) \cup \infty)/\pm 1$.

\subsection{An effective version of the hyperbolic Dehn surgery theorem}
\hfill\\
Consider a complete, finite volume, orientable hyperbolic 3-manifold with cusps, diffeomorphic to  the interior of a compact 3-manifold $X$ with 
boundary consisting of $k$ tori ${\bd_1 X},\ldots, {\bd_k} X$.
Given a homology class $c=(c_1, \ldots, c_k) \in H_1(\bd X; \R) = \oplus_j H_1(\bd_j X; \R)$, we consider deformations of the hyperbolic structure with Dehn surgery coefficients
varying ``radially'':
\begin{eqnarray}{2\pi \over \alpha}c,   ~~~~~0 <\alpha \le 2\pi,
\label{ds_deform}\end{eqnarray}
where $\alpha$ is a smooth increasing function of a parameter $t$.
Then the complex length of the homology class $c_j$ for a given value of $t$ is
\begin{eqnarray} \cl(c_j) = \alpha(t) i. \label{cl_deform}\end{eqnarray}

We want to show that we can deform the hyperbolic structure and increase $\alpha$ to $2\pi$, provided the normalized lengths of the surgery coefficients $c_j$ are sufficiently large. 
By Thurston's original Dehn surgery theorem \cite{thnotes}, we can always increase $\alpha$ from $0$ (corresponding to the complete hyperbolic structure on the interior of $X$) to some small positive value.  The local parametrization in Theorem \ref{DS_param} shows that we can always increase $\alpha$ slightly, so the set of attainable $\alpha$ is an open subset of $(0,2\pi]$.   Then we need to control the change in geometry during the deformation, and guarantee that no degeneration of hyperbolic structures occurs before $\alpha=2\pi$ is reached.

Here is a brief outline of the argument in this section. Let $\hat M_t$ denote the filled hyperbolic manifold corresponding to parameter $t$.
By removing disjoint open tubes around the ends of $\hat M_t$ we obtain a smooth family of  a hyperbolic manifolds $M_t$ with tubular boundary. First we use the positivity condition in Corollary \ref{positivestandard} provided by our harmonic deformation theory to control the variation in complex lengths of curves on $\bd M_t$.  This leads to Proposition \ref{dvdt_inequal}, which gives differential inequalities on the {\em total visual area} $\ca$ of the boundary $\bd M_t$ (i.e. the sum of visual areas of all the boundary components). Then,  in Theorem \ref{vis_area_est}, we apply tube packing arguments to obtain a crucial estimate relating $\ca$ to the tube radius $\hat R$ of $\hat M_t$ (as defined at the end of section \ref{geom_tub}). This shows that good control on $\ca$ 
throughout a deformation will guarantee that the tube radius $\hat R$ stays bounded away from zero. Integrating the differential inequalities for $\ca$ shows that such control can be obtained provided that the normalized length of the surgery coefficient is sufficiently large (Theorem \ref{tube_radius_bounded}). In Theorem \ref{alnondeg}  we use geometric limit arguments to show that this control on the tube radius, together with bounds on the volume (Lemma \ref{schlafli}) and injectivity radius of the boundary (Lemma \ref{injrad}) imply that no degeneration of the hyperbolic manifolds $M_t$ can occur before $\alpha =2\pi$ is reached. Finally, 
combining Theorems \ref{tube_radius_bounded} and \ref{alnondeg}  gives the main results:  Theorems \ref{shapethm} and \ref{multipleshapethm} of the introduction.

Suppose the hyperbolic manifold $M_t$ has tubular boundary consisting of tori
$T_1, \ldots , T_k$ with tube radii $R_1, \ldots , R_k$. 
We  first choose
a harmonic representative $\omega$ for the infinitesimal deformation as in Theorem \ref{negativebR}. Then  
near each boundary torus $T_j$ we can write 
the standard part of $\omega$ in the form:
\begin{eqnarray} \omega_0 =  s_j \omega_m + (x_j+iy_j) \omega_l 
\qquad (s_j,x_j,y_j \in \R), \label{stdform}\end{eqnarray}
or 
\begin{eqnarray} \omega_0 = s_j (\omega_m + (X_j+iY_j) \omega_l )\end{eqnarray} 
where 
\begin{eqnarray} x_j = X_j s_j, \qquad y_j = Y_j s_j.\end{eqnarray} 

The coefficients $s_j,X_j,Y_j$ completely describe the variation in the
holonomy of the torus $T_j$ for any ``radial'' deformation of Dehn surgery coefficients
as in (\ref{ds_deform}).
In particular, using (\ref{cl_deform}) and (\ref{var_cL}) with $\cl = \cl(c_j)$, we see that
\begin{eqnarray} s_j = -{1 \over 2 \alpha} {d\alpha \over dt} 
\label{s_value}\end{eqnarray}
so all $s_j$ are all equal and
depend only on the logarithmic derivative of $\alpha$ with respect to $t$.

Now we choose a parametrization where $\alpha$ is an {\em increasing} function of $t$, 
with $\displaystyle s_j = s= - {1 \over 2 \alpha} {d \alpha \over dt} <0$ for all $j$.  
Using the crucial positivity property in Corollary \ref{positivestandard},
we obtain estimates on the size of the coefficients $X_j, Y_j$.
In particular, we have:

\begin{proposition} \label{X^2_prop}
Let $\ca_j$ be the visual area of $T_j$ and let $\ca = \sum_j \ca_j$ be the total visual area of  $\bd M_t$.  Then  if $R = \min(R_1, \ldots , R_k) \ge R_0=\arctanh(1/\sqrt{3})$, 
\begin{eqnarray}
\sum_j {\ca_j \over \ca} (X_j+\xi)^2 \le w^2, 
\label{X^2_bound}
\end{eqnarray}
where
\begin{eqnarray}
\xi = { 1\over  \sinh^2 \! R \,(2 \cosh^2 \! R + 1)} \text{  and  }
 w = {2 \cosh^2 \! R \over  \sinh^2 \! R \,(2 \cosh^2 \! R + 1)}.
 \label{defn_consts}
 \end{eqnarray}
\end{proposition}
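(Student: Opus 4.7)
The plan is to invoke Corollary \ref{positivestandard} for the harmonic representative $\omega$ of the radial infinitesimal deformation. Since $\alpha$ is strictly increasing, equation (\ref{s_value}) gives $s_j = s \neq 0$ for every $j$, so the deformation is nontrivial, and the corollary yields $b_R(\eta_0,\eta_0) > 0$. Because the standard forms of (\ref{stdform}) are supported in disjoint collar neighborhoods of the tori $T_j$, the cross-terms of $b_R$ vanish and one obtains the decomposition $b_R(\eta_0,\eta_0)=\sum_j b_j$, where $b_j$ depends only on $(s_j,x_j,y_j)$ and on the intrinsic geometry of $T_j$.

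I would then use the explicit quadratic expression for $b_j$ as a function of $(s_j,x_j,y_j)$ derived on p.~382 of \cite{HK2}; setting $s_j = 0$ there recovers the formula already used in the proof of Theorem \ref{inf_rig_rel_DS}. Substituting $s_j = s$, $x_j = X_j s$, $y_j = Y_j s$, factoring out $s^2 > 0$, and replacing $\area(T_j) = \ca_j \sinh R_j \cosh R_j$ via (\ref{calA_def}), the inequality becomes $\sum_j \ca_j\, Q_{R_j}(X_j, Y_j) > 0$ for an $R_j$-dependent quadratic $Q_{R_j}$. Completing the square separately in $X_j$ and $Y_j$ rewrites this as
\[
  Q_{R_j}(X_j, Y_j) \;=\; \Gamma(R_j)\bigl(w(R_j)^2 - (X_j + \xi(R_j))^2\bigr) \;-\; \Delta(R_j)\, Y_j^2,
\]
with $\xi(R_j)$ and $w(R_j)$ agreeing with (\ref{defn_consts}) and with $\Gamma(R_j), \Delta(R_j) \ge 0$ for $R_j \ge R_0$. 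Dropping the nonnegative $\Delta(R_j) Y_j^2$ contributions and dividing the remaining inequality by $\ca = \sum_j \ca_j$ leaves a weighted estimate whose reduction to (\ref{X^2_bound}) becomes a question of the monotonicity of $\Gamma, \xi, w$ in the tube radius.

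The main technical obstacle is twofold: first, verifying by direct calculation that the linear-in-$X_j$ term of $Q_{R_j}$ produces exactly the shift $\xi(R_j)$ matching (\ref{defn_consts}) and that the leading and constant coefficients combine to give $w(R_j)^2$; and second, passing from the weighted inequality with mixed $R_j$ to the uniform statement at $R = \min_j R_j$ claimed in (\ref{X^2_bound}). The hypothesis $R \ge R_0 = \arctanh(1/\sqrt{3})$ is precisely what makes $\Gamma(R_j), \Delta(R_j)$ nonnegative, paralleling the curvature condition $1/\sqrt{3} \le k_1 \le k_2 \le \sqrt{3}$ of Theorem \ref{positiveb}. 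Once the algebra of $Q_{R_j}$ is pinned down and the correct monotonicity in $R$ is established, the proposition follows from positivity alone, since every other ingredient is structural and already present in the setup of Sections \ref{weitz} and \ref{bvalues}.
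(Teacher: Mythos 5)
Your overall route is the paper's: positivity of $b_R(\eta_0,\eta_0)$ from Corollary \ref{positivestandard}, decomposition over boundary tori, the explicit quadratic in $(s_j,x_j,y_j)$ from \cite{HK2}, substitution $x_j=X_js$, $y_j=Y_js$, completing the square, and discarding the $Y_j^2$ contribution. But the step you flag as the remaining obstacle --- passing from mixed radii $R_j$ to the uniform constants $\xi(R), w(R)$ at $R=\min_j R_j$ --- is resolved in the paper not by any monotonicity of $\Gamma,\xi,w$ but by a normalization you have not used: since the quantities $\ca_j$, $s$, $X_j$, $Y_j$ depend only on the canonical filling $\hat M_t$ (visual area is the same for parallel tori, and the standard-form coefficients are determined by the variation of the boundary holonomy), one may simply re-truncate, adding collars so that \emph{every} boundary torus has tube radius exactly $R$. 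Then there is only one radius in play and the completion of the square immediately yields (\ref{X^2_bound}). A monotonicity argument in its place is doubtful: the shift $\xi(R_j)$ changes with $R_j$, so $(X_j+\xi(R_j))^2$ and $(X_j+\xi(R))^2$ are not comparable in a fixed direction uniformly in $X_j$, and you would also have to track the varying weights $\Gamma(R_j)$.

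A second, smaller point: the hypothesis $R\ge R_0=\arctanh(1/\sqrt3)$ is not what makes your $\Gamma(R_j)$ and $\Delta(R_j)$ nonnegative. In the paper's notation the leading coefficient $a=-\tfrac{\sinh^2 R}{\cosh^2 R}(2\cosh^2 R+1)$ is negative for \emph{all} $R>0$, so the completion of the square and the discarding of the $Y_j^2$ term work unconditionally. The role of $R\ge R_0$ is upstream: it is exactly the condition $1/\sqrt3\le k_1=\tanh R\le k_2=\coth R\le\sqrt3$ needed for Theorem \ref{positiveb} and hence for Corollary \ref{positivestandard}, i.e.\ for the inequality $b_R(\eta_0,\eta_0)\ge 0$ that the whole argument rests on. With the re-truncation step inserted and this attribution corrected, your outline matches the paper's proof.
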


\begin{proof}
By Corollary \ref{positivestandard}, $\eta_0 = \Re \omega_0$ satisfies
the positivity property 
$$0 \le b_R(\eta_0,\eta_0) = \int_{\bd M} \hat * D\eta_0 \wedge \eta_0.$$
Now this integral breaks up into a sum of integrals over the boundary tori $T_j$, so
$$0 \le b_R(\eta_0,\eta_0) = \sum_j \int_{T_j} \hat * D\eta_0 \wedge \eta_0.$$
A priori, some of these boundary integrals could be negative, and this makes the argument more complicated in the case of multiple boundary components.

Note that we have some flexibility in the choice of the tube radii $R_j$. In particular, 
by adding collars on boundary components we can decrease any $R_j$, 
so {\em we may assume that $R_j = R$ for each $j$}.
By explicit calculations as in \cite[p383]{HK2} we then obtain:
$$b_R(\eta_0,\eta_0) = \sum_j \left( a (X_j^2 + Y_j^2) + b X_j + c \right) \, \ca_j s^2$$
where $\ca_j$ 
is the visual area of $T_j$, $\displaystyle s = -{1 \over 2 \alpha} {d \alpha \over dt}$ and 
\begin{eqnarray}
a =  {-  \sinh^2 \! R \over \cosh^2 \! R} \left( 2 \cosh^2 \! R + 1 \right), ~~~~
b = {-2 \over\cosh^2 \! R}, ~~~~
c = {2 \cosh^2 \! R -1  \over  \sinh^2 \! R \cosh^2 \! R}.
\label{defn_abc}
\end{eqnarray}

By the completing the squares we obtain
$$0 \le b_R(\eta_0,\eta_0)=
\sum_j a\left( \left(X_j + {b\over 2 a}\right)^2 + Y_j^2\right) \ca_j s^2 +
\left( {4 a c - b^2 \over 4 a}\right) \ca_j s^2 .$$
Since $a<0$ this gives
\begin{eqnarray}
b_R(\eta_0,\eta_0) \le \left( {4 a c - b^2 \over 4 a}\right) s^2 \ca
= \left( { 4 \cosh R \over  \sinh^3 \! R \,(2 \cosh^2 \! R + 1)}\right) s^2 \ca,
\end{eqnarray}
and
\begin{eqnarray}
\quad \sum_j  \left(X_j + {b \over 2 a}\right)^2 \ca_j 
\le \left( {b^2 - 4 a c  \over 4 a^2}\right) \ca,
\end{eqnarray}
where we write $\ca = \sum_j \ca_j$.
Computing $\displaystyle{b \over 2a} = \xi$ and 
$\displaystyle{b^2 - 4 a c  \over 4 a^2} = w^2$
using (\ref{defn_abc}) gives the result.
\end{proof}

Combining this result with equation (\ref{var_cL}) gives us control on the holonomy of peripheral curves:
the variation in the complex length $\cl$ of any homology class on a boundary torus $T_j$ is given by
\begin{eqnarray}{d \over dt} (\cl) = -2s \cl  + 2 (x_j+iy_j) \Re(\cl) 
= {1 \over \alpha}{d\alpha \over dt} \cl  + 2 (x_j+iy_j) \Re(\cl).
 \label{dClength}
\end{eqnarray}

We now use this to estimate the variation in the visual area $\ca_j$ of $T_j$. 
Choose an oriented basis $a,b$ for $H_1(T_j;\Z)$ with complex lengths $l_a+  i\theta_a, l_b+ i\theta_b$. Then applying the formula (\ref{dClength}) to 
 $a,b$ gives
\begin{equation} 
{d  l_a \over dt }  +   i {d  \theta_a \over dt } = 
{1 \over \alpha}{d\alpha \over dt} (l_a + i  \theta_a ) +2(x_j+iy_j) l_a  \label{dot_mu}
\end{equation}
and
\begin{equation}  {d  l_b \over dt }  +   i  {d  \theta_b \over dt }= 
{1 \over \alpha}{d\alpha \over dt} (l_b +  i  \theta_b) +2(x_j+iy_j) l_b  . \label{dot_lambda}
\end{equation}
By differentiating equation (\ref{visA_hol}), it 
follows that
\begin{eqnarray} {d\over dt} (\ca_j) = 
{d\over dt} (l_b \theta_a  - l_a \theta_b)
=2\ca_j\left( {1 \over \alpha}{d\alpha \over dt} +x_j\right) 
\label{var_visual_area}
\end{eqnarray}
or 
\begin{eqnarray}
{d\over dt} (\ca_j) 
= (2-X_j) {\ca_j \over \alpha} \,{d\alpha \over dt}.
\end{eqnarray}
where the $X_j$ satisfy the inequality (\ref{X^2_bound}).

In the following argument, the {\em total visual area of the boundary} 
$\ca = \sum_j \ca_j$ will play a crucial role.  To control the behavior of $\ca$, we will consider the variation of 
\begin{eqnarray} v_j= {\ca_j \over \alpha^2} \text{ and } v = {\ca \over \alpha^2} = \sum _j v_j.
\label{defn_v}\end{eqnarray}
Note that these quantities only depend on the canonical filling $\hat M_t$; any truncation
$M_t$ of $\hat M_t$ with tubular boundary gives the same values for 
$\ca_j$, $\ca$, $v_j$ and $v$.

First we examine the limiting behavior of $v_j$ for our family of hyperbolic  manifolds with tubular boundary $M_t$, where we fix a homology class $c_j \in H_1(T_j;\R)$
and vary the Dehn surgery coefficients ${2\pi \over \alpha} c_j$ along a ray going out to 
$\infty$ as $\alpha \to 0$.
The homology class $c_j$ has complex length $\cl(c_j)= \alpha i$
where $\alpha$ is the deformation parameter. Hence, by equation 
(\ref{L_vs_cL}),  the Euclidean length of $c_j$ on $T_j$
is $L_j=L(c_j) = \alpha \sinh R_j$ where $R_j$ is the tube radius of $T_j$, so  
$${\area(T_j) \over L_j^2 } = { \ca_j \sinh R_j \cosh R_j \over \alpha^2 \sinh^2(R_j)}
= v_j \coth R_j .$$
As $\alpha \to 0$, the hyperbolic structures converge to the complete hyperbolic structure on the interior of $X$ and $R_j \to \infty$.  Hence
$$v_j = {\area(T_j) \over L_j^2} \tanh R_j \to  {1 \over \hat L_j^2},$$
and
$$v = \sum_j v_j \to  \sum_j {1 \over \hat L_j^2},$$
where $\hat L_j = \hat L(c_j)$ is the normalized Euclidean length of $c_j$, 
as defined in (\ref{Lhat_def}), on a horospherical cross section for the 
complete hyperbolic structure on the interior of $X$. 

\noindent{\bf Remark:} The quantity $\hat L = \hat L (c) $ defined by
\begin{eqnarray}
{1 \over {\hat L}^2} = \sum_j {1 \over {\hat L_j}^2}
\label{gen_hatL}
\end{eqnarray}
seems to be a useful analogue of the normalized length in the one-cusped case, and
we will also call it the {\em normalized length} of the homology class 
$c \in H_1(\bd X;\R)$
in the multi-cusped case. 
Its reciprocal $1/\hat L$ gives a good measure of the distance from the complete hyperbolic structure on the interior of $X$ to the hyperbolic structure $M(c)$ with Dehn surgery coefficient $c$.

\smallskip
Differentiating $v_j$ 
using equation (\ref{var_visual_area})  gives
\begin{eqnarray}
{dv_j \over dt} &=& {d \over dt}\left({\ca_j \over \alpha^2}\right) 
= {1\over \alpha^2}\left( {d\ca_j \over dt} - {2 \ca_j \over \alpha}{d\alpha \over dt}\right) 
= {2 \ca_j x_j\over \alpha^2} = 2 v_j x_j
\end{eqnarray}
and 
\begin{eqnarray}
{1 \over v} {dv \over dt} = 2 \sum_j {v_j \over v} x_j = 
-{1 \over \alpha} {d\alpha \over dt} \sum_j {v_j \over v} X_j.
\label{dvdt}
\end{eqnarray}
Combining this with Proposition \ref{X^2_prop} gives our basic
differential inequalities for $v$.

\begin{proposition} \label{dvdt_inequal}
Let $v=\ca/\alpha^2 = v_1 + \ldots + v_k$ where $v_j = \ca_j/\alpha^2$. 
Then $v$ satisfies the differential inequalities
\begin{eqnarray}
{1 \over \sinh^2 \hat R} {1 \over \alpha} {d\alpha \over dt} 
\ge  {1 \over v} {dv \over dt}  \ge
-{1 \over \sinh^2 \!  \hat R}\left({2 \cosh^2 \!  \hat R -1 \over 2 \cosh^2 \!  \hat R +1}\right)
 {1 \over \alpha} {d\alpha \over dt},
\label{multicusp_inequal}
\end{eqnarray}
provided the tube radius $\hat R$ of the canonical filling $\hat M_t$
is larger than $R_0$.
 Further, $v$ satisfies the initial condition
\begin{eqnarray}
\lim_{\alpha \to 0} v = \sum_j {1 \over \hat L_j^2},
\label{v_initial}
\end{eqnarray}
where $\hat L_j = \hat L(c_j)$ is the normalized Euclidean length of $c_j$, 
on a horospherical cross section for the 
complete hyperbolic structure on the interior of $X$. 
\end{proposition}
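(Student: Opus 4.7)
The plan is to derive the inequalities by combining the identity (\ref{dvdt}) with the quadratic constraint on the coefficients $X_j$ from Proposition \ref{X^2_prop}, using Cauchy--Schwarz to pass from an $L^2$ bound to an $L^1$ bound. First I would rewrite (\ref{dvdt}) more explicitly: since $v_j/v = \ca_j/\ca$ (the $\alpha^2$ factors cancel) and $\alpha$ is increasing in $t$, the quantity $\frac{1}{\alpha}\frac{d\alpha}{dt}$ is positive, and the variation of $v$ is entirely controlled by the convex combination $\sum_j (\ca_j/\ca) X_j$.

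The main step is to bound this convex combination. Viewing $p_j := \ca_j/\ca$ as a probability distribution on $\{1,\dots,k\}$, Jensen's inequality (equivalently, Cauchy--Schwarz) yields
\begin{equation*}
\Bigl( \sum_j p_j (X_j + \xi) \Bigr)^2 \;\le\; \sum_j p_j (X_j+\xi)^2 \;\le\; w^2,
\end{equation*}
where the last inequality is exactly (\ref{X^2_bound}) of Proposition \ref{X^2_prop}, valid as soon as the tube radii satisfy $R_j \ge R_0$. Hence
\begin{equation*}
-\xi - w \;\le\; \sum_j p_j X_j \;\le\; -\xi + w.
\end{equation*}
A direct computation from the definitions (\ref{defn_consts}) gives the clean identities
\begin{equation*}
\xi + w \;=\; \frac{2\cosh^2 R + 1}{\sinh^2 R\,(2\cosh^2 R + 1)} \;=\; \frac{1}{\sinh^2 R},
\qquad
w - \xi \;=\; \frac{2\cosh^2 R - 1}{\sinh^2 R\,(2\cosh^2 R + 1)}.
\end{equation*}
Substituting into (\ref{dvdt}) and multiplying through by $-\frac{1}{\alpha}\frac{d\alpha}{dt} < 0$ flips the inequalities and produces exactly (\ref{multicusp_inequal}), stated with $R$ in place of $\hat R$.

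To replace $R$ by the tube radius $\hat R$ of the canonical filling, recall from Section \ref{geom_tub} that for any $r < \hat R$ we may truncate $\hat M_t$ along equidistant tori to obtain a hyperbolic manifold with tubular boundary all of whose boundary components have tube radius $r$. The quantities $v_j$, $v$ and $\ca_j$ are invariants of the canonical filling and do not depend on this choice, whereas the constants $\xi, w$ depend continuously on the truncation radius. Applying the previous inequality with $r$ taken as close to $\hat R$ as desired (and $r \ge R_0$, which is possible once $\hat R > R_0$) and passing to the limit $r \to \hat R$ yields the stated bounds.

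Finally, the initial condition (\ref{v_initial}) is essentially the calculation presented in the paragraph preceding the proposition: the Dehn surgery normalization $\cl(c_j) = \alpha i$ combined with (\ref{L_vs_cL}) gives $L(c_j) = \alpha \sinh R_j$, so $v_j = (\area T_j / L(c_j)^2)\tanh R_j$. As $\alpha \to 0$ the hyperbolic structures converge to the complete cusped structure on the interior of $X$, the tube radii $R_j \to \infty$, and $\area(T_j)/L(c_j)^2$ converges to the same ratio measured on a horospherical cross section, which is $1/\hat L_j^2$ by definition of normalized length. Summing over $j$ gives (\ref{v_initial}). The only substantive obstacle in the whole argument is the Jensen/Cauchy--Schwarz step together with the fortuitous telescoping $\xi + w = 1/\sinh^2 R$; everything else is bookkeeping.
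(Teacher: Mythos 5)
Your proposal is correct and follows essentially the same route as the paper's proof: Cauchy--Schwarz (Jensen) applied to the weights $v_j/v = \ca_j/\ca$ against the quadratic bound of Proposition \ref{X^2_prop}, the identity $\xi + w = 1/\sinh^2 R$, multiplication by the negative quantity $-\frac{1}{\alpha}\frac{d\alpha}{dt}$, and a limiting/truncation argument to pass from $R$ to $\hat R$, with the initial condition quoted from the computation preceding the proposition. No gaps.
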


\begin{proof} Since $\hat R > R_0$, we can truncate $\hat M_t$ to give a
hyperbolic manifold with tubular boundary $M_t$ such that all components
of $\bd M_t$ have tube radius at least $R$, where $\hat R > R \ge R_0$.

Using the Cauchy-Schwartz inequality and (\ref{X^2_bound}) then gives
\begin{eqnarray*} \left(   \sum_j {v_j \over v}  (X_j+{\xi})   \right)^2
&=& \left(\sum_j \left({v_j \over v}\right)^{1/2} \cdot 
\left({v_j \over v}\right)^{1/2} (X_j+{\xi}) \right)^2  \\
&\le& \sum_j {v_j \over v} \sum_j  {v_j \over v} (X_j+{\xi})^2  
=  \sum_j  {v_j \over v} (X_j+{\xi})^2 \le w^2,\end{eqnarray*}
since $\sum_j v_j = v$.
Hence, 
\begin{eqnarray} -w - {\xi}
\le  \sum_j {v_j \over v} X_j 
\le w - {\xi}. \label{X_bound}
\end{eqnarray}
Writing out $w$ and ${\xi}$ in terms of $R$ gives
$$
-{1 \over \sinh^2R}
\le  \sum_j {v_j \over v} X_j  \le
{1 \over \sinh^2 \! R}\left({2 \cosh^2 \! R -1 \over 2 \cosh^2 \! R +1}\right).
$$
Multiplying through by the negative number $\displaystyle-{1 \over \alpha} {d\alpha \over dt}$
and recalling, from (\ref{dvdt}), that
$${1 \over v} {dv \over dt} =  -{1 \over \alpha} {d\alpha \over dt} \sum_j {v_j \over v} X_j$$
gives the inequality
\begin{eqnarray*}
{1 \over \sinh^2R} {1 \over \alpha} {d\alpha \over dt} 
\ge  {1 \over v} {dv \over dt}  \ge
-{1 \over \sinh^2 \! R}\left({2 \cosh^2 \! R -1 \over 2 \cosh^2 \! R +1}\right)
 {1 \over \alpha} {d\alpha \over dt}.
\end{eqnarray*}
Since these inequalities hold for all $R$ such that $R_0\le R<\hat R$, they also 
hold when $R$ is replaced by $\hat R$. This gives (\ref{multicusp_inequal}).
The initial conditions for $v$ was already derived above.
\end{proof}

Next, let $M$ be a hyperbolic 3-manifold with tubular boundary, and let $\hat M$ denote 
its canonical filling. Then the tube packing arguments in the proof of  \cite[Theorem 4.4]{HK2}  give us the following crucial estimate relating the total visual area of $\bd M$ to the tube radius of $\hat M$.

\begin{theorem} \label{vis_area_est}
Let $M$ be a compact, orientable hyperbolic 3-manifold with tubular boundary,
and let $\ca$ be the total visual area of $\bd M$.
Then
\begin{eqnarray} \ca \ge h(\hat R),
\label{calAbound}
\end{eqnarray}
where $\hat R$ is the tube radius of the filled manifold $\hat M$, and $h$ is the function given by
\begin{eqnarray}
h(r) = 3.3957 {\tanh r \over \cosh(2r)}.
\label{h_def}
\end{eqnarray}
\end{theorem}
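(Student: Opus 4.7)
The plan is to adapt the tube-packing argument from \cite[Theorem 4.4]{HK2}. A key preliminary observation is that the visual area $\ca_j$ of a boundary torus $T_j$ depends only on the end of $\hat M$ it bounds, not on the particular tubular truncation, so the entire calculation may be carried out on the filled manifold $\hat M$ with its maximal radius-$\hat R$ tubes.

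First I would use the defining property of $\hat R$ to produce a tangency. By maximality, tubes of radius $\hat R$ around the singular components of $\hat M$ embed disjointly while tubes of any strictly larger radius do not, so at radius $\hat R$ either two distinct tubes meet tangentially or a single tube is self-tangent. Lifting the configuration to $\H^3$ yields two geodesic axes $\gamma$ and $\gamma'$ (possibly two lifts of the same geodesic) at perpendicular distance exactly $2\hat R$, with radius-$\hat R$ solid tubes $U, U'$ touching along the common perpendicular segment.

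Next I would run the visual-measure calculation on the cylinder $\bd U$ of radius $\hat R$ around $\gamma$. By the definition (\ref{calA_def}), $\ca_j$ is precisely the normalized area of $T_j \subset \bd U$; in the universal cover, this corresponds to the portion of $\bd U$ not blocked by ``shadows'' projected radially from $\gamma$ onto $\bd U$ by neighboring tubes. For each tangent tube $U'$ with core at distance $2\hat R$ from $\gamma$, an explicit computation in hyperbolic cylindrical coordinates around $\gamma$ gives the area of its shadow on $\bd U$ as a fixed multiple of $\tanh(\hat R)/\cosh(2\hat R)$; the factor $\tanh \hat R$ comes from the cross-sectional geometry of the tangent tube at distance $2\hat R$, while $1/\cosh(2\hat R)$ emerges from normalizing by $\sinh \hat R \cosh \hat R$ to pass from area on $\bd U$ to visual area. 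Combining this shadow formula with B\"or\"oczky's sharp density bound for packings of equidistant configurations in $\H^3$ (applied to the family of tangent tubes competing for room around $\gamma$) produces a universal upper bound on the fraction of $\bd U$ coverable by shadows, and the complementary visible fraction yields $\ca \ge C\,\tanh(\hat R)/\cosh(2\hat R)$ for an explicit constant $C$.

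The main obstacle is pinning down the sharp numerical value $C = 3.3957$. This requires carefully invoking the hyperbolic packing density estimate and matching it against the exact shadow-area formula, and then verifying that the same inequality survives in the multi-cusp setting where different tangency types (tube-tube tangencies between components of possibly different tube radii, or self-tangencies of a single component) must all be handled uniformly so that the total $\ca = \sum_j \ca_j$, rather than a single $\ca_j$, appears on the left-hand side.
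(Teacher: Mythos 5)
Your overall strategy is the paper's: the visual area is truncation-independent, maximality of $\hat R$ produces a tangency between expanded tubes, the tangency yields explicit ``shadow'' regions (ellipses) on the boundary tori, and a packing density bound converts their area into a lower bound on $\ca$. However, there is a genuine logical inversion in the step where you extract the bound. You assert that $\ca_j$ ``corresponds to the portion of $\bd U$ not blocked by shadows'' and that ``the complementary visible fraction yields $\ca \ge C\tanh(\hat R)/\cosh(2\hat R)$.'' This is not right: by definition (\ref{calA_def}), $\ca_j$ is the normalized area of the \emph{entire} torus $T_j$, not of its unshadowed part, and knowing that the covered fraction is at most $\pi/(2\sqrt 3)$ gives no lower bound on the absolute area of the torus by itself. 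The correct deduction runs the other way: the tangency produces two open ellipses with \emph{disjoint interiors contained in} $T_i \cup T_j$, each of area $A_e = 0.980258\,\pi \sinh^2\!\hat R/(2\cosh(2\hat R))$, and since a packing of such ellipses in a flat torus has density at most $\pi/(2\sqrt 3)$, one gets $\area(T_i\cup T_j) \ge \frac{2\sqrt 3}{\pi}\cdot 2A_e$; dividing by $\sinh\hat R\cosh\hat R$ and using $\ca \ge \ca_i + \ca_j$ gives $\ca \ge 3.3957\tanh\hat R/\cosh(2\hat R)$. So the shadows must appear on the left side of the density inequality (as the packed objects), not be subtracted off.

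Two smaller points. First, the density input is not B\"or\"oczky's bound for ball packings in $\H^3$; it is the classical Euclidean density $\pi/\sqrt{12}$ for packings of congruent ellipses (affine images of circle packings) in the flat metric on $T_i\cup T_j$, applied after projecting the disjoint balls in the branched cover $\hat\H^3$ radially onto the radius-$\hat R$ tori. Second, your picture of ``two geodesic axes in $\H^3$'' needs the mild adjustment that for general Dehn surgery singularities the tube is a quotient of the completed branched cover $\hat\H^3$ rather than a tube about a genuine geodesic in $\hat M$; the lemmas of \cite{HK2} are set up to handle this, together with the self-tangency case $i=j$, so your concern in the final paragraph about treating all tangency types uniformly is resolved there. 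With the inclusion/density step corrected, the constant $3.3957 \approx 2\sqrt{3}\cdot 0.980258$ falls out of the computation rather than requiring a separate matching argument.
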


\begin{proof} We briefly recall the argument from \cite{HK2}.
If we expand tubes around the ends of $\hat M$ at the same rate, then these first bump
when the tube radius is $\hat R$. Suppose that the tube bounded by a torus $T_i$ bumps
into the tube bounded by a torus $T_j$ when the tube radius reaches $\hat R$; possibly with $i=j$.
Then the tube packing arguments 
from \cite{HK2} 
show that  $T_i \cup T_j$ contains two
open ellipses meeting only at the bumping point, each with semi-major axes
$$a = {0.980258 \sinh \hat R \cosh \hat R \over  \cosh(2 \hat R)}
\text{ and }
b={ \sinh \hat R \sinh \hat R \over \sinh(2 \hat R)}$$
and hence
of area  
$$A_e= \pi a b = \displaystyle{0.980258 \, \pi \sinh^2 \! \hat R\over 2  \cosh(2\hat R)}.$$
(These two ellipses are in the same torus $T_i$ if $j=i$; otherwise
there is one ellipse in $T_i$ and one ellipse in $T_j$.)
Further, 
the packing density for these ellipses is at most ${\pi \over 2 \sqrt{3}}$,
so it follows that
$$\area(T_i \cup T_j) \ge {4 \sqrt{3} \over \pi} A_e \ge 3.3957 { \sinh^2 \! \hat R \over \cosh(2\hat R)}$$
and the total visual area of $\bd M$ satisfies
$$\ca \ge {\area(T_i \cup T_j) \over \sinh \hat R \cosh \hat R}
\ge  3.3957 { \sinh \hat R \over \cosh \hat R  \cosh(2\hat R)} = h(\hat R).$$
\end{proof}

Now we can apply the same arguments as in \cite[section 5]{HK2}, but with
the tube radius condition $\hat R > 0.531$ replaced by $\hat R > 
R_0 = \arctanh(1/\sqrt{3}) \approx 0.65848.$ Since $h(r)$ is a decreasing function
for $r \ge R_0$ it follows from (\ref{calAbound}) that if initially the tube radius 
satisfies $\hat R > R_0$ and we know that $\ca < h(R_0)$ throughout a deformation, then
$\hat R > R_0$ throughout the deformation.

Next we use the control on $\ca$ given by Proposition \ref{dvdt_inequal} 
and the inequality (\ref{calAbound}) to show that the tube radius $\hat R$
of $\hat M_t$ stays bounded below throughout any deformation as in (\ref{ds_deform})
with $0 \le \alpha(t) \le \alpha_0 \le 2\pi$, provided that the normalized
lengths of the Dehn surgery coefficients $c_j \in H_1(T_j;\R)$ are sufficiently large.

First note that the inequalities (\ref{multicusp_inequal}) are {\em exactly equivalent} to the differential inequalities for $u = 1/v$ obtained in \cite{HK2}
in the one cusped case:
\begin{eqnarray*}
-{1 \over \sinh^2 \hat R} {1 \over \alpha} {d\alpha \over dt} 
\le  {1 \over u} {du \over dt}  \le
{1 \over \sinh^2 \!  \hat R}\left({2 \cosh^2 \!  \hat R -1 \over 2 \cosh^2 \!  \hat R +1}\right)
 {1 \over \alpha} {d\alpha \over dt}.
\end{eqnarray*}

Now we choose a parametrization with  $t=\alpha^2$.  Then this
becomes
\begin{eqnarray}
-{1 \over \sinh^2 \hat R} {u \over 2 \alpha^2} 
\le  {du \over dt}  \le
{1 \over \sinh^2 \!  \hat R}\left({2 \cosh^2 \!  \hat R -1 \over 2 \cosh^2 \!  \hat R +1}\right)
 {u \over 2 \alpha^2},
 \label{dudt_inequal}
\end{eqnarray}

We analyze this as in  \cite{HK2} by introducing a new variable $z=\tanh(\rho)$
where $h(\rho)= \ca$ and $\rho \ge R_0$. 
Note that $\rho$ is defined
whenever $\ca \le h(R_0)$ and if $\hat R \ge R_0$ then (\ref{calAbound}) implies
that $R_0 \le \rho \le \hat R$. This allows us to replace $\hat R$ by $\rho$ in the 
inequality (\ref{dudt_inequal}).

Now we define functions
\begin{eqnarray}H(z) = {1 \over \ca} = {1 \over h(\r)} =
{1+z^2 \over {3.3957 z (1-z^2)}},
\label{Hdef}\end{eqnarray}
\begin{eqnarray}G(z) = {H(z)\over 2}{1-z^2 \over z^2} = {1+z^2 \over 6.7914
~ z^3},
\label{Gdef} \end{eqnarray}
and
\begin{eqnarray}\tilde G(z) = {H(z)\over 2}{(1-z^2)(1+z^2) \over z^2(3-z^2)} =
{(1+z^2)^2 \over 6.7914 ~ z^3\,(3-z^2)}.\label{GGdef}\end{eqnarray}

Then the differential inequality (\ref{dudt_inequal}) for $u$, with $\hat R$ replaced
by $\rho$, becomes
$$-G(z) \le {d u \over dt} \le \tilde G(z)$$
and putting $u=tH(z)$ gives the differential inequalities for $\displaystyle{dz \over dt}$
obtained in  \cite[equation (51)]{HK2}:
\begin{equation}
{H'(z) \over H(z)-\tilde G(z)} {dz \over dt} \le -{1\over t} \le
{H'(z) \over H(z)+G(z)} {dz \over dt}.
\label{dz_inequal}
\end{equation}
To solve these differential inequalities we write:
$${H'(z) \over H(z)-\tilde G(z)}= \tilde F(z) + {1 \over 1-z},~~~~
{H'(z) \over H(z)+G(z)} = F(z) + {1 \over 1-z}.$$
Then $$F(z) = -{(1+4z+6z^2+z^4)\over(z+1)(1+z^2)^2}$$
is integrable on the interval  $0 \le z \le 1$ and
$${\tilde F}(z) = -\frac{z^6+7 z^4+12 z^3-9 z^2-4 z+1}{(z+1)
   \left(z^2+1\right) \left(z^2-2 z-1\right) \left(z^2+2 z-1\right)}$$
 is integrable on the interval $\sqrt{2}-1+\varepsilon \le z \le 1$ for each $\varepsilon>0$.

Now we integrate (\ref{dz_inequal}) with respect to $t$ from $t_1$ to $\tau$ where $0<t_1< \tau<1$,
and $z(\tau) \ge \tanh(R_0) = 1/\sqrt{3}$.
Carefully taking a limit as $t_1\to 0, z_1=z(t_1) \to 1$ as in \cite{HK2} gives estimates
on the time $\tau=\alpha^2$ taken to reach a given value of
$\ca = \ca(z) = {1 \over H(z)}$ where $z=z(\tau)$.
Define $\hat L$ by
$${1 \over {\hat L}^2} = \sum_j {1 \over {\hat L_j}^2}$$
where $\hat L_j = \hat L(c_j)$ is the normalized length of the homology class $c_j \in H_1(T_j;\R)$
for the complete hyperbolic structure on the interior of $X$, as in (\ref{gen_hatL}). Then we obtain
\begin{eqnarray}
\tilde f(z) \ge {\alpha^2 \over {\hat L}^2} \ge f(z),
\label{Lhat_bound}
\end{eqnarray}
where
\begin{eqnarray}
\tilde f(z) = 3.3957(1-z) \exp(-\int_1^z \tilde F(w)\, dw)
\label{tilde_f_def}
\end{eqnarray}
and
\begin{eqnarray}
f(z) = 3.3957(1-z) \exp(-\int_1^z F(w)\, dw).
\label{f_def}
\end{eqnarray}
These bounds are illustrated in the graph in Figure 1. 

\begin{figure}[h]
\begin{center}
\includegraphics[scale=0.9]{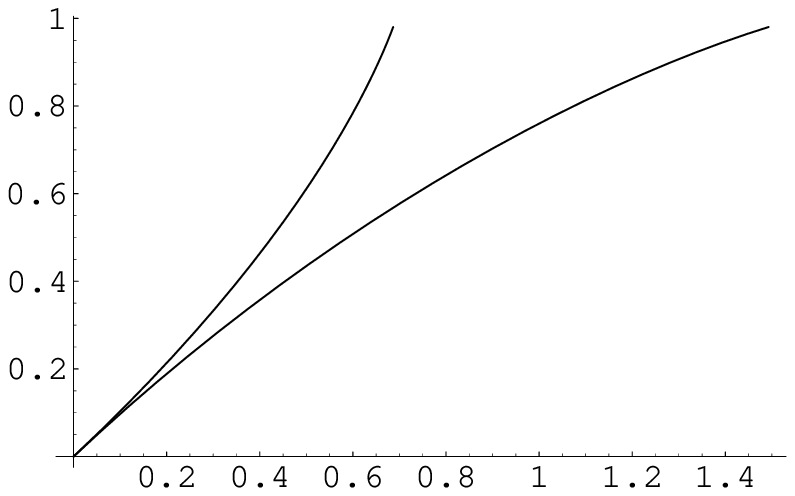}
\end{center}
\caption{Graph of $\ca$ versus $x = {\alpha^2 \over {\hat L}^2}$}\label{fig1}
\end{figure}

We conclude that  we can increase the parameter $\alpha$ from $0$ to $2\pi$,
maintaining $z=\tanh \r > z_0 = \tanh(R_0)$,
hence keeping the tube radius $\hat R \ge \r > R_0 = \arctanh(1/\sqrt{3})$ and
$\ca < h(R_0)$, provided
$$ \hat \L^2 ~> ~ {(2\pi)^2 \over 3.3957 (1-z_0)} ~
\exp\left(\int_1^{z_0} F(w) dw \right)
\approx 57.5041$$
or
$$\hat \L ~>~ \sqrt{57.5041} \approx 7.58315.$$
Thus, we have shown that as long as 
$\hat L$ satisfies this inequality then
there is a lower bound to the tube radius: 

\begin{theorem} Let $M_t$ be a smooth family of hyperbolic structures 
with tubular boundary on $X$ for $0 \le t <1$, 
with Dehn surgery coefficients ${2\pi \over \alpha(t)} c$ where 
$c= (c_1, \ldots, c_k) \in H_1(\bd X;\R)$ and $0\le \alpha(t) \le \alpha_0 \le 2\pi$.
If the normalized lengths of the surgery coefficients $\hat L_i = \hat L(c_i)$ 
satisfy
$$\sum_i {1\over \hat L_i^2} < {1 \over C^2} \text{ where } C= 7.5832,$$
then tube radius of the filled manifold $\hat M_t$ satisfies $\hat R > R_0$ for all $t$.
\label{tube_radius_bounded}
\end{theorem}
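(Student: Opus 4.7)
The plan is an open-closed continuity argument. Let $S = \{t \in [0,1) : \hat R(\hat M_t) > R_0\}$; since the complete structure at $t=0$ has $\hat R = \infty$, we have $0 \in S$, and $S$ is open by continuous dependence of the tube radius on the hyperbolic structure. To show $S$ is closed in $[0,1)$, I would argue that whenever $[0, t^*) \subset S$ with $t^* \in (0, 1]$ and $z(t^*) := \tanh \rho(t^*) = z_0 := \tanh R_0 = 1/\sqrt{3}$ (the borderline case where the tube radius is about to drop to $R_0$), the deformation parameter $\alpha(t^*)$ must exceed $2\pi$, contradicting the hypothesis $\alpha(t) \le \alpha_0 \le 2\pi$.

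Throughout $[0, t^*)$, Proposition \ref{dvdt_inequal} applies since $\hat R > R_0$. Passing to $u = 1/v$ and reparametrizing by $t := \alpha^2$ converts those inequalities into (\ref{dudt_inequal}), with initial condition $u/t \to \hat L^2$ as $t \to 0$, where $1/\hat L^2 = \sum_j 1/\hat L_j^2$. Next, I would define $\rho$ implicitly by $h(\rho) = \ca$; by Theorem \ref{vis_area_est} together with monotonicity of $h$ on $[R_0, \infty)$ one has $R_0 \le \rho \le \hat R$, and because the coefficients in (\ref{dudt_inequal}) are monotone in the tube radius, replacing $\hat R$ by $\rho$ preserves the inequality. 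The substitution $u = t H(z)$ with $z = \tanh \rho$ then transforms (\ref{dudt_inequal}) into the pointwise inequality (\ref{dz_inequal}) on $dz/dt$.

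I would integrate (\ref{dz_inequal}) in $t$ from $t_1 > 0$ up to $\tau < t^*$ and pass to the limit $t_1 \to 0$, $z_1 := z(t_1) \to 1$, using the decompositions
\begin{equation*}
\frac{H'(z)}{H(z)+G(z)} = F(z) + \frac{1}{1-z}, \qquad \frac{H'(z)}{H(z) - \tilde G(z)} = \tilde F(z) + \frac{1}{1-z},
\end{equation*}
where $F$ is integrable on $[0,1]$ and $\tilde F$ is integrable on $[\sqrt{2}-1+\varepsilon, 1]$. The $1/(1-z)$ singularities cancel against the $\int d\log t$ produced by integration, leaving the finite two-sided bound (\ref{Lhat_bound}). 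Evaluating the lower bound at $z = z_0$ gives $\alpha(t^*)^2 \ge \hat L^2 \cdot f(z_0)$, and a direct numerical calculation gives $f(z_0) \approx (2\pi)^2/57.5041$. Under the hypothesis $\hat L^2 > C^2 = 57.5041$, this forces $\alpha(t^*)^2 > (2\pi)^2$, the desired contradiction. Hence $S = [0,1)$.

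The main obstacle I expect is carrying out the limiting argument at $t_1 \to 0$ rigorously: one must show that the logarithmic singularities of the integrands conspire to produce the finite boundary term $\hat L^2$ from the initial condition (\ref{v_initial}) rather than a divergent contribution. The one-cusp version of this delicate passage was done carefully in \cite[Section~5]{HK2}; I expect it to transfer with essentially no change to the present multi-cusp setting, because Proposition \ref{dvdt_inequal} establishes differential inequalities for $v$ of precisely the same form as in the one-cusp case, once the normalized length is reinterpreted via $1/\hat L^2 = \sum_j 1/\hat L_j^2$. The choice of constant $C = 7.5832$ is then forced: it is (just above) $\sqrt{57.5041} = \sqrt{(2\pi)^2/f(z_0)}$, the threshold at which the lower bound in (\ref{Lhat_bound}) at $z = z_0$ exactly matches $\alpha = 2\pi$.
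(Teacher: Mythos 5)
Your proposal is correct and follows essentially the same route as the paper: Proposition \ref{dvdt_inequal} plus the packing bound of Theorem \ref{vis_area_est}, the substitution $u=tH(z)$ with $z=\tanh\rho$, integration of (\ref{dz_inequal}) with the limit $t_1\to 0$ handled as in \cite[Section 5]{HK2}, and evaluation of the resulting bound (\ref{Lhat_bound}) at $z_0=1/\sqrt{3}$ to extract the threshold $\hat L^2>57.5041$. Your open--closed continuity framing is just a slightly more explicit packaging of the same bootstrap the paper uses to keep $\hat R\ge\rho>R_0$ throughout the deformation.
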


The next main result is the following analogue of  \cite[Theorem 5.4]{HK2}:

\begin{theorem}\label{alnondeg} 
Let $M_t$ be a smooth family of hyperbolic structures 
with tubular boundary on $X$ for $0\le t <1$,
with Dehn surgery coefficients ${2\pi \over \alpha(t)} c$ where 
$c= (c_1, \ldots, c_k) \in H_1(\bd X;\R)$ and 
$\alpha(t)$ is an increasing function of $t$
with $0\le \alpha(t) \le \alpha_0 \le 2\pi$. 
Suppose the  tube radius of the canonical filling $\hat M_t$
satisfies $\hat R > R_0 =\arctanh(1/\sqrt{3}) \approx 0.6585$ for $t=0$
and the total visual area of $\bd M_t$ satisfies $\ca \le h_0 = h(R_0)$ for all $t$.  
Then the manifolds $M_t$ converge geometrically as $t \to 1$ to a hyperbolic manifold $M_1$ with tubular boundary.  Further, the Dehn surgery coefficient for $M_t$ converges to the Dehn surgery coefficient for $M_1$. 
\end{theorem}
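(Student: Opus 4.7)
The plan is to obtain uniform geometric bounds along the family $M_t$ that force subsequential geometric convergence, then use the local parametrization theorem (Theorem \ref{DS_param}) together with continuity of complex length to identify the subsequential limit and upgrade the convergence to full convergence as $t \to 1$.

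First, I would combine the hypothesis $\ca \le h_0 = h(R_0)$ with the inequality $\ca \ge h(\hat R)$ from Theorem \ref{vis_area_est}, noting that $h$ is decreasing on $[R_0, \infty)$; this forces the tube radius $\hat R$ of $\hat M_t$ to satisfy $\hat R \ge R_0$ throughout the deformation. Truncating $\hat M_t$ along tubes of a fixed radius slightly less than $R_0$ yields a family of compact hyperbolic manifolds $M_t$ whose tubular boundary components have principal curvatures, mean curvature, and intrinsic flat geometry under uniform control. The visual area bound $\ca \le h_0$ together with $\alpha \in [0,\alpha_0]$ and equation (\ref{calA_def}) bounds the area of $\bd M_t$, and the Schl\"afli-type formula (Lemma \ref{schlafli}) together with bounds on $\alpha$ and on the complex lengths of the surgery coefficients will give a uniform volume bound. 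The injectivity radius of each boundary torus is controlled via Lemma \ref{injrad}.

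Next, I would apply the standard compactness theorem for pointed hyperbolic manifolds with boundary having uniformly bounded volume, controlled boundary geometry, and a uniform lower bound on the injectivity radius away from the boundary (supplied by the tube radius bound, which confines any short geodesic to the tubular ends whose geometry is already explicitly controlled). Extracting a subsequence $t_n \to 1$, the manifolds $M_{t_n}$ converge geometrically to a hyperbolic manifold $M_1$ with tubular boundary. Because the tube radius lower bound prevents the tubular ends from collapsing and precludes the appearance of any new short essential curves, the diffeomorphism type is preserved in the limit and $M_1$ is diffeomorphic to the appropriate truncation of $X$. The holonomy representations converge in the character variety, and since complex length is continuous in the representation, the Dehn surgery coefficients $(2\pi/\alpha(t_n))c$ of $M_{t_n}$ converge to the Dehn surgery coefficient $c_1 = (2\pi/\alpha(1))c$ of $M_1$.

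Finally, I would promote subsequential convergence to genuine convergence as $t \to 1$ using Theorem \ref{DS_param}: hyperbolic structures with tubular boundary near $M_1$ are locally uniquely parametrized by their Dehn surgery coefficients, so every subsequential limit of $M_t$ must coincide with $M_1$. The main obstacle will be verifying that the geometric limit has exactly the correct topology and tubular boundary structure, i.e.\ that no collapsing, pinching, or change of diffeomorphism type occurs at $t=1$. This is precisely where the uniform bounds $\hat R \ge R_0$ and $\ca \le h_0$ are essential: they simultaneously prevent the boundary tori from degenerating, keep the holonomies of the boundary tori bounded away from parabolic, and control the injectivity radius of the interior, so that the limit lies in the same moduli space and can be identified via its Dehn surgery coefficient.
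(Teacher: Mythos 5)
Your proposal follows essentially the same route as the paper: derive the uniform tube radius bound $\hat R \ge R_0$ from $\ca \le h(R_0)$ and Theorem \ref{vis_area_est}, control volume via the Schl\"afli formula (Lemma \ref{schlafli}) and boundary injectivity radius via Lemma \ref{injrad}, invoke geometric compactness for the truncated manifolds, and identify the limit through the convergence of complex length functions, using that $\cl_t(c_j)=\alpha(t)i$ stays away from $0$ so the limit has finite tube radius and a well-defined Dehn surgery coefficient. The only cosmetic difference is that you upgrade subsequential to full convergence via the local parametrization of Theorem \ref{DS_param}, where the paper instead cites the bilipschitz convergence argument of \cite[Theorem 3.12]{HK2}; both are sound.
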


\begin{proof} We begin with some estimates on the geometry of the manifolds $M_t$.
First we study the change in volume of the {\em filled in} manifolds $\hat M_t$ with Dehn surgery type singularities. In fact, we have Schl\"afli type formula for the variation in volume: 

\begin{lemma} \label{schlafli}
Let $M_t$ be a smooth family of hyperbolic 3-manifolds with tubular boundary 
with Dehn surgery coefficients varying radially as in (\ref{ds_deform}).
Then the variation in volume $V$ of the filled manifolds $\hat M_t$
is given by 
\begin{eqnarray}
dV = - { \ca \over 2 \alpha} d \alpha,
\label{dVol}
\end{eqnarray}
where $\ca = \sum_j \ca_j $ is  the total visual area of $\bd M_t$.
In particular, the volume decreases as $\alpha$ increases. 
\end{lemma}
\noindent Remark:  In the cone manifold case, (\ref{dVol}) is just the usual Schl\"afli formula:\\
  $dV = -{1\over 2} \sum_j \ell_j d\alpha$, where $\ell_j$ is the length 
and $\alpha$ is the cone angle of the
core geodesic produced when $T_j$ is filled. 

\begin{proof}
From \cite[equation (46)]{neumann-zagier}  
(or \cite[chapter 5]{Ho}, \cite[section 4.5]{CCGLS}) 
the variation in volume is a sum of contributions $dV_j$ from the boundary tori $T_j$,
and we have
\begin{eqnarray}
dV_j =  -{1 \over 2} ( l_{b} d \theta_{a} - l_{a} d \theta_{b}),
\label{dV_schalfli}
\end{eqnarray}
if $a,b$ is any oriented basis for $H_1(T_j;\Z)$.
Now, from equations (\ref{dot_mu}) and (\ref{dot_lambda}), we have
\begin{eqnarray*}
l_{b} {d \theta_{a} \over dt} - l_{a} {d \theta_{b} \over dt}
&=& l_{b}({1\over\alpha}{d\alpha \over dt} \theta_a + 2y l_{a})-  
l_{a}({1\over\alpha}{d\alpha \over dt} \theta_b + 2 y l_{b}) \\
&=& {1\over\alpha}{d\alpha \over dt} (l_{b}\theta_{a}-l_{a}\theta_{b})= {\ca_j \over \alpha} {d\alpha \over dt}.
\end{eqnarray*}
Hence
\begin{eqnarray*}
dV = -\sum_j { \ca_j \over 2 \alpha} d \alpha = - { \ca \over 2 \alpha} d \alpha.
\end{eqnarray*}
\end{proof}

Tube packing arguments as in \cite{HK2} give the following estimate on injectivity radius of the boundary of a hyperbolic 3-manifold with tubular boundary. 

\begin{lemma}\label{injrad}
Let $M$ be a compact, orientable hyperbolic 3-manifold with tubular boundary 
such that each boundary component has tube radius at least $R>0$. Then 
there exists a constant $c(R)>0$ such that $M$ can be
truncated along tubular tori to give a hyperbolic 3-manifold with tubular boundary 
such that the injectivity radius of the Euclidean metric on each boundary component 
is at least $c(R)$.
\end{lemma}

\begin{proof} We first expand the boundary tori of $\bd M$, moving each torus inwards at the
same rate 
until it bumps into itself or another boundary component.
Let $T_1, \ldots , T_k$ be the (immersed) tubular tori obtained when this bumping occurs; these tori meet tangentially in a finite collection of points. Let $R_i$ be the tube radius of $T_i$;
then $R_i \ge R$ for all $i$.  
  
We let $N=\hat M$ denote the canonical filling of $M$, $\tilde N$ the universal covering of $N$, and $\hat N$ the metric completion of $\tilde N$. 
For each subset $Y \subset \tilde N \subset \hat N$, let $\hat Y$ denote the closure of 
$Y$ in $\hat N$.

We can regard $T_1, \ldots, T_k$ as a subsets of the canonical filling $N$. Then
each $T_i$ bounds an open tube $V_i$ in $N$ and the tubes $V_1, \ldots, V_k$ are disjoint.
 
Let $U_i$ be one (fixed) lift of $V_i$ to the universal cover
$\tilde N$ of $N$ and let $\hat U_i$ be its closure in $\hat N$.
Now consider the lifts $U$ of tubes $V_j$ to $\tilde N$ such that
$\bd U$ meets $\bd U_i$ tangentially at a point (possibly with $j=i$), and let $\hat U$ be the closure of $U$ in $\hat N$.  For each such $U$, we construct a point $q \in \hat U$
as follows: Let $p$ denote the intersection point of $\bd U_i$ and $\bd U$.
Then $q$ is the point inside $\hat U$ at distance $R$ from $p$ along the geodesic through $p$ orthogonal to $\bd U$.

\begin{claim} Let $Q$ be the collection of all points $q$ constructed as above. Then 
the distance in $\hat N$ between any two distinct points of $Q$ is at least $2R$.
\end{claim}

\begin{proof} Let $U', U'' \subset \tilde N$ be lifts of the tubes $V_1,\ldots,V_k$ whose closures 
$\hat U',\hat U'' \subset \hat N$ contain two distinct points $q' , q'' \in Q$, and
let $B(q',R)$ and $B(q'',R)$ denote the open balls in $\hat N$ of radius $R$ around $q'$  and $q''$ respectively.
Since $R \le R_j$, it follows from the triangle inequality that 
$B(q',R) \subset \hat U'$ and $B(q'',R) \subset \hat U''$. But $\hat U'$ and $\hat U''$ have disjoint interiors, hence $B(q',R)$ and $B(q'',R)$ are disjoint. This proves the claim.
\end{proof}

Now the distance from the boundary of $V_i$ to any singular point in the completion 
of $N$ is at least $R$. So we can expand $\hat U_i$ to an open tube $\hat W_i$ of radius $d_i = R_i + R$ which embeds isometrically in $\hat N$.  

The geometry of $\hat W_i$ can be described as follows.
Let $g$ be a geodesic in $\H^3$. The universal cover of
$\H^3 - g$ can be completed by adding a geodesic, $\hat g$, which projects
to $g$ in $\H^3$. (This can be thought of
as the infinite cyclic branched cover of $\H^3$ branched
over the geodesic $g$.) Let $\hat \H^3$ denote this completion. Then
there is an isometry $\phi :   \hat W_i \to  W $ to the open tube $W$ of radius $d_i$ about $\hat g$ in $\hat \H^3$.

We use this to identify $\hat W_i$ with $W \subset \hat \H^3$ and to identify the set of points $Q$ with a subset $\hat Q \subset \bd W \subset \hat \H^3$.  Since the closure $\overline W$ of $W$ is a convex subset of $\hat \H^3$, it follows from the claim above that
$d_{\hat \H^3} (q',q'') = d_{\overline W} (q',q'')  
\ge 2 R$ for all $q' \ne q'' \in \hat Q$.

Next we use the arguments of \cite{HK2} to estimate the distance measured on $\bd U_i$ between the tangency points $p$ described above. For $q \in \hat Q$, let $B_q$ denote the closed ball in $\hat \H^3$ of radius $R$ around the point $q$ 
and let $P_q$ denote the orthogonal projection of $B_q$ onto the surface at radius $R_i$ from the core geodesic of $\hat \H^3$. Since the balls $B_q$ have disjoint interiors, are of equal radius and are allcentered at the same distance $d_i = R_i + R$ from the core geodesic in $\hat \H^3$, it follows that their projections $P_q$ also have disjoint interiors.

As in \cite{HK2} we introduce cylindrical coordinates on $\H^3$ around the geodesic $g$ and lift these to cylindrical coordinates
$(r,\theta,\zeta)$ on $\hat \H^3$; here the angle $\theta$ is a well defined real number.
Then from  \cite[Lemma 4.3]{HK2}, the projection of $B_q$ 
to the $(\theta,\zeta)$ plane can be parametrized by 
$$\sinh^2\zeta \cosh^2  (R_i+R) + \sin^2 \theta \sinh^2 (R_i+R) \le \sinh^2 \! R,$$
where $R \le R_i$.

Now, as in \cite{HK2}, we have 
$$| \sinh \zeta | \le {\sinh R \over  \cosh(R_i+R) }
\le {\sinh R \over \cosh(2 R)} \le {1 \over 2\sqrt{2}},$$
hence, using convexity of the $\sinh$ function,
$$|{ \sinh \zeta}| \le S |  \zeta| \text{ where }
S= {{1 \over 2\sqrt{2}} \over \arcsinh{1 \over 2\sqrt{2}}}
< {1 \over 0.980258}.$$
Further, $$|\sin \theta| \le |\theta|,$$ so
$P_q$ contains the region:
$$(S\zeta)^2 \cosh^2(R_i+R) + \theta^2 \sinh^2(R_i+R)  \le \sinh^2 \! R$$
or 
$$\left({S \cosh(R_i+R)\over \sinh R \cosh R_i}\right)^2 (\zeta\cosh R_i)^2+
\left({ \sinh(R_i+R) \over \sinh R \sinh R_i}\right)^2 ( \theta \sinh R_i)^2 \le 1.$$
Since $\z \cosh R_i$ and $\theta\sinh R_i$ are Euclidean coordinates on the surface
in $\H^3$ at radius $r=R_i$, this
equation describes an ellipse with semi-major axes
$$a = { \sinh R \cosh R_i \over S \cosh(R_i+R)}
\text{ and }
b={ \sinh R \sinh R_i \over \sinh(R_i+R)}$$
whose interior is disjoint from the ellipses corresponding to other points $q \in \hat Q$.
Thus these project to ellipses in $T_i$ with disjoint interiors.

Now
$${1\over a} = S(\coth R + \tanh R_i) < S(\coth R+1)$$
and
$${1\over b} = \coth R + \coth R_i \le 2 \coth R < \coth R + 1 \le S(\coth R+1),$$
where $S< {1 \over 0.980258}.$
Hence the Euclidean injectivity radius of $T_i$ is larger than 
$$c(R) = \displaystyle{0.980258 \over \coth R + 1}.$$
By adding sufficiently small collars on the tori $T_i$ we obtain a hyperbolic manifold
with tubular boundary as desired.
\end{proof}

The proof of theorem \ref{alnondeg} now follows from the arguments in the proofs of 
\cite[Theorem 1.2]{HK2}  and  \cite[Theorem 3.12]{HK2} together with the volume and injectivity radius estimates given in Lemmas  \ref{schlafli} and \ref{injrad} above.
We outline the argument.

Let $M$ be a hyperbolic 3-manifold with tubular boundary whose canonical filling
$\hat M$ has tube radius at least $R_0$. Then by using Lemma \ref{injrad}
we can add standard collars on the boundary components
to obtain a hyperbolic manifold $N \subset \hat M$ with tubular boundary such that 
the boundary components of $\bd N$
are at least distance $R_0/2$ apart and have tube radius at least $R_0/2$, and whose injectivity radius at all boundary points (as defined in \cite[p388]{HK2})
is at least $c'$, where $c'$ is a positive constant depending only on $R_0$.

Thus we can truncate the manifolds $\hat M_t$ to obtain a smooth family of hyperbolic manifolds $N_t$ with tubular boundary, with uniform lower bounds on the 
tube radius and injectivity radius (as defined in \cite[p388]{HK2}) on their boundaries. 
Further, the volume of $N_t$ is at most the volume of the canonical filling
$\hat N_t = \hat M_t$, and Lemma \ref{schlafli} shows that this is decreasing throughout the deformation since $\alpha(t)$ is increasing. Hence the volumes of the $N_t$ are bounded above.

 It follows 
as in the proof of  \cite[Theorem 3.12]{HK2} that $N_t$ converge in the bilipschitz topology to a hyperbolic manifold $N_1$ with tubular boundary, and we can choose holonomy representations $\rho_t$ for $N_t$ converging to the holonomy representation $\rho_1$ for $N_1$.  Further, the convergence of the geometry on the boundary of $N_t$ implies, using (\ref{C_len_def}), that the corresponding complex length functions
${\cal L}_t$ for $\bd N_t$ converge to the complex length function ${\cal L}_t$ for $\bd N_1$.

In particular, since $\lim_{t\to 1} {\cal L}_t(c_j) = \lim_{t\to 1} \alpha(t) i \ne 0$ the limiting complex length function is not identically zero. Hence the limiting tube radius for each boundary component of $N_1$ is {\em finite}.  It follows that ${\cal L}_1$ is invertible and the Dehn surgery coefficient for each boundary component of $N_1$ is the limit of the corresponding Dehn surgery coefficients for $N_t$. This completes the proof of Theorem \ref{alnondeg}.
\end{proof}

Combining the control on tube radii given by Theorem \ref{tube_radius_bounded} with Theorem
\ref{alnondeg} shows that we can deform the complete hyperbolic structure on 
the interior of $X$
to a hyperbolic structure with Dehn surgery coefficient $c \in H_1(\bd X;\R)$ provided
the normalized length $\hat L$ of $c$ is at least  $7.5832$. This proves
our main result.

\begin{theorem} 
Consider a complete, finite volume hyperbolic structure on the interior of a compact, orientable $3$-manifold $X$ with $k \ge 1$ torus boundary components.  
Let $T_1, \ldots, T_k$ be horospherical tori which are embedded as a 
cross-sections to the cusps of the complete structure.   
Then there exists a universal constant $C = 7.5832$ such that 
there is a ``radial'' deformation from the complete hyperbolic structure on the interior of $X$
 to a hyperbolic structure with Dehn surgery coefficient 
 $c = (c_1,c_2, \cdots, c_k) \in H_1(\bd X;\R)$
 through hyperbolic structures with Dehn surgery coefficients ${2\pi \over \alpha} c$ provided the normalized lengths
$\hat L_j = \hat L(c_j)$ on $T_j$ satisfy 
$$\sum_j {1 \over \hat L_j^2} < {1 \over C^2}.$$
\label{multicusp_thm}
\end{theorem}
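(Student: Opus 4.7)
The plan is to run a standard continuity argument along the radial ray of Dehn surgery coefficients $\frac{2\pi}{\alpha}c$, parameterized by $\alpha \in (0, 2\pi]$, and to use the three main pieces of machinery already established: the local parametrization by Dehn surgery coefficients (Theorem \ref{DS_param}), the a priori lower bound on the tube radius (Theorem \ref{tube_radius_bounded}), and the geometric convergence statement (Theorem \ref{alnondeg}). Define
$$A = \{\alpha_0 \in (0, 2\pi] : \text{there exists a smooth family } M_t, \ 0 \le t \le 1, \text{ of hyperbolic structures with tubular boundary on } X$$
$$\text{with Dehn surgery coefficient } \tfrac{2\pi}{\alpha(t)}c, \text{ where } \alpha(t) \text{ increases smoothly from } 0 \text{ to } \alpha_0\}.$$
The goal is to show that $A = (0, 2\pi]$; in particular that $2\pi \in A$.

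First I would show $A$ is non-empty and contains an interval $(0, \alpha_1)$. This follows from Thurston's original hyperbolic Dehn surgery theorem (or equivalently from the local parametrization of Theorem \ref{DS_param} applied near the complete structure regarded as a limit of tubular structures with $\hat R \to \infty$): for $\alpha$ sufficiently small, the complete structure deforms to a structure whose Dehn surgery coefficient is $\frac{2\pi}{\alpha}c$. Moreover, for small $\alpha$ the tube radius $\hat R$ is large (close to $\infty$), so the hypotheses of both Theorem \ref{tube_radius_bounded} and Theorem \ref{alnondeg} are automatically satisfied at the start of the deformation.

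Next I would show that $A$ is open in $(0, 2\pi]$. If $\alpha_0 \in A$ is realized by a structure $M_{\alpha_0}$, then by Theorem \ref{tube_radius_bounded} the tube radius of $\hat M_{\alpha_0}$ satisfies $\hat R > R_0$, so every boundary torus has finite tube radius bigger than $R_0$. Theorem \ref{DS_param} then gives a smooth local parametrization of the character variety by Dehn surgery coefficients near this structure, so the radial deformation extends smoothly to a neighborhood of $\alpha_0$ in the parameter.

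The main obstacle — and this is the heart of the argument — is showing $A$ is closed in $(0, 2\pi]$. Suppose $\alpha_n \in A$ with $\alpha_n \nearrow \alpha_\infty \le 2\pi$; I need to produce a structure at $\alpha_\infty$. Because the normalized length hypothesis $\sum_j 1/\hat L_j^2 < 1/C^2$ holds, Theorem \ref{tube_radius_bounded} gives $\hat R > R_0$ throughout the entire deformation up to $\alpha_\infty$, and consequently Theorem \ref{vis_area_est} gives the uniform bound $\ca \le h(R_0)$ on the total visual area. With these hypotheses in place, Theorem \ref{alnondeg} applies: the manifolds $M_t$ converge geometrically as $t \to 1$ (with $\alpha(t) \to \alpha_\infty$) to a hyperbolic manifold $M_{\alpha_\infty}$ with tubular boundary, and its Dehn surgery coefficient is the limit $\frac{2\pi}{\alpha_\infty}c$. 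This places $\alpha_\infty$ in $A$. The delicate point here — already handled inside Theorem \ref{alnondeg} via the volume bound (Lemma \ref{schlafli}) and injectivity radius bound (Lemma \ref{injrad}) — is ruling out collapse or loss of a boundary component in the geometric limit, which is precisely why the tube radius lower bound $R_0$ is essential. Since $A$ is non-empty, open and closed in the connected interval $(0, 2\pi]$, we conclude $A = (0, 2\pi]$, and taking $\alpha_0 = 2\pi$ yields the required radial deformation.
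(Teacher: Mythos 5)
Your proposal is correct and follows essentially the same route as the paper: the authors also run an open--closed continuity argument on the set of attainable $\alpha$ in $(0,2\pi]$, getting non-emptiness from Thurston's Dehn surgery theorem, openness from the local parametrization of Theorem \ref{DS_param}, and closedness by combining the tube radius control of Theorem \ref{tube_radius_bounded} with the geometric convergence statement of Theorem \ref{alnondeg}. Your observation that the tube radius bound $\hat R > R_0$ is what makes both the openness step (finite tube radius hypothesis of Theorem \ref{DS_param}) and the compactness step work is exactly the structure of the paper's argument.
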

In particular, this implies Theorems  \ref{shapethm} and \ref{multipleshapethm} as stated in the introduction.

\smallskip
\noindent{\bf Remark:} The same result holds if some surgery coefficients $c_j$
are infinite, i.e. some cusps remain complete. The proof is essentially the same as
given above.

\subsection{Volume estimates}
\hfill\\
Finally, we use our estimates on the total visual area $\ca$ to control the change in geometry during hyperbolic Dehn filling. 
We consider a deformation as in the previous section,
and use the notation and results from that section.

The inequalities in (\ref{Lhat_bound}) give us upper and lower bounds on $\ca$ as a function of
$x = {\alpha^2 \over \hat L^2}$ provided $x \le ({2\pi \over 7.5832})^2$.
If $\hat L \ge 7.5832$ then these estimates apply throughout the deformation,
as $\alpha$ increases from $0$ to $2 \pi$. Then
by integrating the estimates, we obtain upper and lower bounds on the change in the volume $V$
of the filled in manifolds $\hat M_t$ with Dehn surgery type singularities.

Using the parametrization $t=\alpha^2$, Schl\"afli's formula (\ref{dVol}) gives
$${d \over dt} (V) 
= - {\ca \over 2\alpha} {d \alpha \over dt} 
= -{\ca \over 4 \alpha^2} = -{\ca \over 4t}.$$
Fixing $\hat L$ and putting $x = {\alpha^2 \over {\hat L}^2}={t  \over {\hat L}^2}$ we find that
the {\em decrease} in volume is
$$\Delta V  
= \int_0^{(2\pi)^2} {\ca \over 4t} \, dt
= \int_0^{\hat x} {\ca(x) \over4 x} \, dx$$
where $\hat x= {(2\pi)^2 \over {\hat L}^2}$ and $\ca(x)$ lies in the
region defined by the inequalities in (\ref{Lhat_bound}) and illustrated in Figure \ref{fig1}.

The lower bound for $\ca(x)$ is given by $\ca = \ca (z)$ where $x=\tilde f(z)$,
$dx = \tilde f'(z) dz$. Hence
$${1\over 4} \int_1^{\tilde z} {\ca(z) \tilde f'(z) \over \tilde f(z)} \,dz \le \Delta V$$
where $\tilde f ( \tilde z) = \hat x= {(2\pi)^2 \over {\hat L}^2}$. Rewriting this
using the definitions of $\tilde f$ and  $\ca= 1/H$ gives
$${1\over 4} \int_{\tilde z}^1 {H'(z) \over H(z)(H(z) - \tilde G(z))} \, dz \le \Delta V.$$

The upper bound for $\ca(x)$ is given by $\ca = \ca (z)$ where $x= f(z)$,
$dx =  f'(z) dz$. Hence
$$\Delta V \le {1\over 4} \int_1^{\hat z} {\ca(z) f'(z) \over  f(z)} \,dz$$
where $f ( \hat z) = \hat x= {(2\pi)^2 \over {\hat L}^2}$. Hence
$$\Delta V \le  {1\over 4} \int_{\hat z}^1 {H'(z) \over H(z)(H(z)+ G(z))} \, dz .$$

In particular, taking $\hat z = 1/\sqrt{3}$ gives an upper bound
$\Delta V \le 0.197816$ when ${(2\pi)^2 \over {\hat L}^2} \le f(1/\sqrt{3})$,
i.e. ${\hat L} \ge 7.5832$.

This gives us the following estimates on the changes in geometry during generalized hyperbolic Dehn filling. 

\begin{theorem} \label{geomthm}
Let $X$ be a compact, orientable $3$-manifold as in Theorem \ref{multicusp_thm}, 
and let $V_\infty$ denote the volume of the complete hyperbolic structure
on the interior of $X$.
Let $c =(c_1,\ldots,c_k)\in H_1(\bd X;\R)$
be a surgery coefficient with normalized lengths $\hat L_i = \hat L (c_i)$
satisfying
$${1 \over {\hat L}^2} = \sum_j {1 \over \hat L_j^2} < {1 \over C^2} \text{ where } C = 7.5832,$$
and let $M(c)$ be the filled hyperbolic manifold with Dehn surgery coefficient $c$.
 Then:
\begin{enumerate}
\item The decrease in volume in hyperbolic Dehn filling 
$\Delta V = V_\infty - \vol(M(c))$ satisfies
$${1\over 4} \int_{\tilde z}^1 {H'(z) \over H(z)(H(z) - \tilde G(z))} \, dz 
\le \Delta V  \le  {1\over 4} \int_{\hat z}^1 {H'(z) \over H(z)(H(z)+ G(z))} \, dz, $$
where $\hat z$ and $\tilde z$ are defined 
by $$f ( \hat z) = \tilde f ( \tilde z) =  {(2\pi)^2 \over {\hat L}^2}.$$
\item The total visual area $\ca$ in $M(c)$ satisfies
$$ \ca(\tilde z)= {1 \over H(\tilde z)} \le \ca \le \ca(\hat z) = {1 \over H(\hat z)},$$
where $\hat z, \tilde z$  are as above.
\end{enumerate}
\end{theorem}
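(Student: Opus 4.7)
The plan is to combine the Schläfli-type volume variation formula (Lemma \ref{schlafli}) with the already-established upper and lower bounds on the total visual area $\ca$ as a function of $x = \alpha^2/\hat{L}^2$ from \eqref{Lhat_bound}, integrating the resulting differential form from $\alpha=0$ (the complete structure) to $\alpha = 2\pi$ (the filled manifold $M(c)$).

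First I would set up the volume computation. Using the parametrization $t = \alpha^2$, Schläfli's formula \eqref{dVol} gives $dV/dt = -\ca/(4t)$, so
\begin{equation*}
\Delta V = V_\infty - \vol(M(c)) = \int_0^{(2\pi)^2} \frac{\ca}{4t}\, dt = \int_0^{\hat{x}} \frac{\ca(x)}{4x}\, dx,
\end{equation*}
where $\hat{x} = (2\pi)^2/\hat{L}^2$. Under the hypothesis $\hat{L} \ge C = 7.5832$, Theorem \ref{tube_radius_bounded} guarantees $\hat R > R_0$ throughout the deformation, so the bounds in \eqref{Lhat_bound} are valid for all $x \in [0,\hat x]$, and $\ca(x)$ lies in the region illustrated in Figure \ref{fig1}.

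Next I would convert the bounds. Since the lower envelope of $\ca(x)$ is parametrized by $x = \tilde f(z)$ with $\ca = 1/H(z)$, and the upper envelope by $x = f(z)$ with $\ca = 1/H(z)$, I substitute each in the integral. Using the defining relations
\begin{equation*}
\tilde f(z) = 3.3957(1-z)\exp\Bigl(-\int_1^z \tilde F(w)\,dw\Bigr), \qquad f(z) = 3.3957(1-z)\exp\Bigl(-\int_1^z F(w)\,dw\Bigr),
\end{equation*}
together with the identities (from just before \eqref{dz_inequal})
\begin{equation*}
\frac{\tilde f'(z)}{\tilde f(z)} = \frac{H'(z)}{H(z) - \tilde G(z)}, \qquad \frac{f'(z)}{f(z)} = \frac{H'(z)}{H(z) + G(z)},
\end{equation*}
the change of variable $x = \tilde f(z)$ (respectively $x = f(z)$) transforms $\frac{\ca(x)}{4x}\,dx$ into $\frac{H'(z)}{4H(z)(H(z)-\tilde G(z))}\,dz$ (respectively with $+G(z)$). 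This yields the two integral inequalities for $\Delta V$ stated in part (1), with $\hat z$ and $\tilde z$ defined by $f(\hat z) = \tilde f(\tilde z) = \hat x$.

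For part (2), I would simply note that at the final parameter $\alpha = 2\pi$ the filled manifold $M(c)$ has total visual area $\ca = \ca(\hat x)$, and the inequalities \eqref{Lhat_bound} evaluated at $x = \hat x$ give precisely $1/H(\tilde z) \le \ca \le 1/H(\hat z)$. There is no real obstacle here beyond verifying the direction of the inequality, which follows because $H$ is monotone decreasing on the relevant range of $z$ (equivalently, $\ca = 1/H$ is increasing in $z$, and the upper envelope $f$ and lower envelope $\tilde f$ bracket $x$). The main technical point — and the one I expect requires the most care — is checking that the substitutions $x = f(z)$ and $x = \tilde f(z)$ are genuine monotone reparametrizations on the interval of integration, so that the change of variables is valid and the endpoint conditions $\hat z, \tilde z < 1$ correspond to $\hat x > 0$ consistently; this is a consequence of the sign of $F$ and $\tilde F$ on the interval $(\arctanh(1/\sqrt 3), 1)$ which was already used to integrate \eqref{dz_inequal}.
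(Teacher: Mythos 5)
Your proposal follows the paper's proof essentially verbatim: Schl\"afli in the parametrization $t=\alpha^2$ gives $\Delta V=\int_0^{\hat x}\frac{\ca(x)}{4x}\,dx$, and the two envelopes of (\ref{Lhat_bound}) are pulled back via $x=\tilde f(z)$ and $x=f(z)$ to produce the stated integrals. Two small bookkeeping points to fix in a careful write-up: the logarithmic derivatives actually satisfy $\tilde f'/\tilde f=-\bigl(\tilde F+\frac{1}{1-z}\bigr)=-H'/(H-\tilde G)$ (and similarly for $f$), the minus sign being absorbed when the limits flip from $\int_1^{\tilde z}$ to $\int_{\tilde z}^1$; and on the relevant range $z\in[1/\sqrt3,1)$ the function $H$ is \emph{increasing}, so $\ca=1/H$ is \emph{decreasing} in $z$ --- this, combined with $\hat z\le z\le\tilde z$ from the decreasing substitutions $f,\tilde f$, is what yields the inequalities of part (2) in the direction stated (your parenthetical has the monotonicity reversed, which would flip the conclusion if taken literally).
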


\noindent{\bf Remark:} The estimate on the change in volume during Dehn filling in 
Theorem \ref{geomthm} is a significant improvement on  the estimate obtained in \cite{HK2}. The previous analysis
bounded the change in volume until $\ca = \alpha \ell$ reached its maximum allowed value $h_{max}$, but in this process the parameter $\alpha$ could increase beyond $2\pi$. Here we estimate the change in volume until $\alpha$ reaches $2\pi$;
 this gives the more refined estimate.

The graphs in Figures 2 and 3 illustrate the results in Theorem \ref{geomthm}. 
The dotted lines in these figures correspond to the asymptotic formulas of 
Neumann-Zagier \cite{neumann-zagier}:  as ${\hat L} \to \infty$,  
the decrease in volume is
$\Delta V \sim  {\pi^2  \over \hat L^2}$
and the visual area is
$\ca \sim {(2\pi)^2 \over \hat L^2}.$
\begin{figure}[h]
\begin{center}
\includegraphics[scale = 0.85]{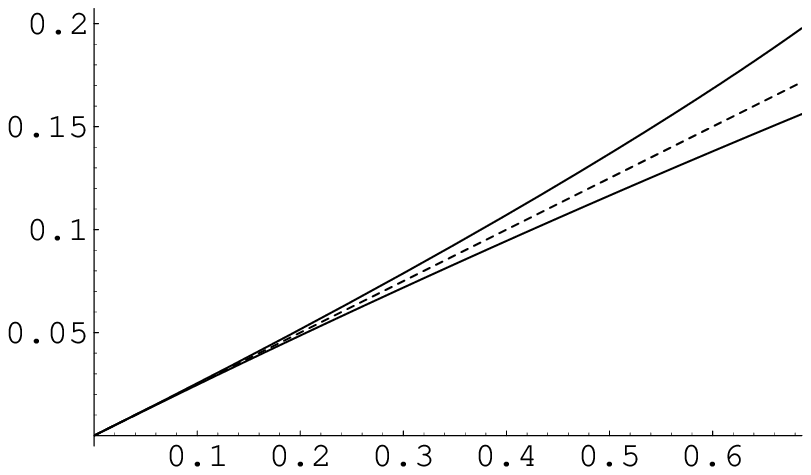}
\end{center}
\caption{Graph of $\Delta V$ versus $\hat x= {(2\pi)^2 \over {\hat L}^2}$}
\label{fig2}
\end{figure}
\begin{figure}[h]
\begin{center}
\includegraphics[scale=0.85]{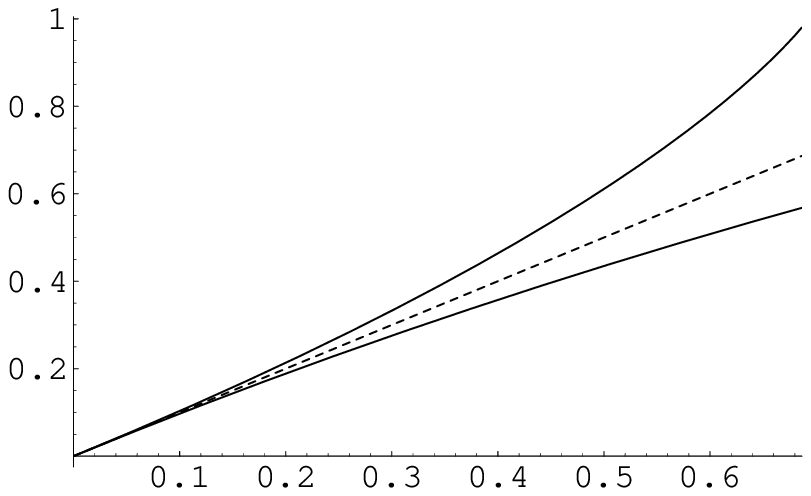}
\end{center}
\caption{Graph of $\ca$ versus $\hat x= {(2\pi)^2 \over {\hat L}^2}$}
\label{fig3}
\end{figure}

In particular, taking $\hat L = 7.5832$ in the above gives 
the following numerical estimates. 

\begin{corollary} Let $X$ be a compact, orientable $3$-manifold as in Theorem \ref{multicusp_thm},  and let $c\in H_1(\bd X;\R)$
be a surgery coefficient with $\hat L(c) > 7.5832$.
Then
\begin{enumerate}
\item the decrease in volume during hyperbolic Dehn filling is at most $0.198$,
\item the total visual area of the boundary of the filled hyperbolic manifold $M(c)$ is at most 
$h(R_0) \approx 0.980254$. 
\end{enumerate}
In particular, if $M(c)$ is a hyperbolic manifold with a smooth core geodesic
of length $\ell$ then $\ca = 2\pi \ell$.
Thus the core geodesic length is at most ${h(R_0)\over 2\pi} \approx 0.156012$.
\end{corollary}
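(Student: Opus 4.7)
The corollary is a direct numerical specialization of Theorem \ref{geomthm} together with the observation that the constant $C=7.5832$ was chosen precisely so that the threshold $\hat L = C$ corresponds to $\hat z = 1/\sqrt{3} = \tanh(R_0)$ in the upper-bound integral. My plan is therefore to apply Theorem \ref{geomthm} at the worst case $\hat L = 7.5832$, noting that all three bounds improve as $\hat L$ grows.

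First, I would verify that at $\hat L = C$, the value of $\hat z$ defined by $f(\hat z) = (2\pi)^2/\hat L^2$ is exactly $\hat z = 1/\sqrt{3}$. This is immediate from the computation preceding Theorem \ref{tube_radius_bounded}: the constant $C^2 \approx 57.5041$ was defined as $(2\pi)^2/[3.3957(1-z_0)\exp(\int_1^{z_0}F(w)\,dw)] = (2\pi)^2/f(z_0)$ with $z_0 = 1/\sqrt{3}$. Since $f$ is monotone on $(0,1)$ with $f(1)=0$, if $\hat L \ge 7.5832$ then $(2\pi)^2/\hat L^2 \le f(1/\sqrt{3})$, so the solution $\hat z$ of $f(\hat z)=(2\pi)^2/\hat L^2$ satisfies $\hat z \ge 1/\sqrt{3}$.

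For part (1), Theorem \ref{geomthm} gives
\[
\Delta V \;\le\; \frac{1}{4}\int_{\hat z}^{1} \frac{H'(z)}{H(z)(H(z)+G(z))}\,dz.
\]
Since the integrand is non-negative on $(1/\sqrt{3},1)$ and $\hat z \ge 1/\sqrt{3}$, this bound is largest at $\hat z = 1/\sqrt{3}$, i.e.\ exactly at $\hat L = 7.5832$. Using the explicit rational expressions (\ref{Hdef}) and (\ref{Gdef}) for $H$ and $G$, one computes numerically
\[
\frac{1}{4}\int_{1/\sqrt{3}}^{1} \frac{H'(z)}{H(z)(H(z)+G(z))}\,dz \;\approx\; 0.197816 \;<\; 0.198,
\]
which is the claimed upper bound. (The integrand is a rational function of $z$, so this is a routine numerical evaluation, which I would not belabor here.)

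For part (2), Theorem \ref{geomthm}(2) gives $\ca \le 1/H(\hat z)$. Since $H$ is decreasing on $(0,1)$ (as one sees from (\ref{Hdef})) and $\hat z \ge 1/\sqrt{3}$, we have $1/H(\hat z) \le 1/H(1/\sqrt{3}) = h(R_0)$, giving the upper bound $\ca \le h(R_0) \approx 0.980254$. For part (3), when $M(c)$ has a smooth core geodesic of length $\ell$ and cone angle $2\pi$, the formula $\ca = \alpha \ell$ from Section \ref{geom_tub} specializes to $\ca = 2\pi\ell$, so $\ell \le h(R_0)/(2\pi) \approx 0.156012$. The main substantive step is simply the numerical evaluation of the integral in (1); there is no remaining analytic obstacle, since all the deformation-theoretic work has been carried out in Theorems \ref{tube_radius_bounded}, \ref{alnondeg}, \ref{multicusp_thm} and \ref{geomthm}.
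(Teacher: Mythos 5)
Your proposal follows exactly the paper's (largely implicit) argument: set $\hat L = 7.5832$ so that $\hat z = 1/\sqrt{3} = \tanh R_0$, apply Theorem \ref{geomthm}, evaluate the volume integral numerically to get $0.197816$, and read off the core length from $\ca = \alpha\ell$ at $\alpha = 2\pi$. One slip to fix in part (2): $H$ is \emph{not} decreasing on $(0,1)$ --- from (\ref{Hdef}), $1/H(z) = 3.3957\, z(1-z^2)/(1+z^2)$ tends to $0$ at both endpoints, and it is decreasing (so $H$ is \emph{increasing}) precisely on $[1/\sqrt{3},1)$, which is the monotonicity of $h(r)$ for $r \ge R_0$ noted in the paper; indeed, had $H$ been decreasing, your own deduction would yield $1/H(\hat z) \ge 1/H(1/\sqrt{3})$, the wrong direction. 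The correct statement ($H' \ge 0$ on $[1/\sqrt{3},1)$) is also what makes the integrand in part (1) non-negative there, so with that correction both monotonicity claims are consistent and the argument goes through as in the paper.
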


\end{document}